\patchcmd{\subsection}{\bfseries}{\bfseries}{}{}
\patchcmd{\subsection}{-.5em}{.5em}{}{}
\theoremstyle{theorem}
\newtheorem{thm}{Theorem}[section]
\newtheorem{prop}[thm]{Proposition}
\newtheorem{lem}[thm]{Lemma}
\newtheorem{pty}[thm]{Property}
\newtheorem{csq}[thm]{Consequence}
\newtheorem{corollary}[thm]{Corollary}
\theoremstyle{definition}
\newtheorem{dfn}[thm]{Definition}
\newtheorem{cond}[thm]{Condition}
\theoremstyle{remark}
\newtheorem{remark}[thm]{Remark}
\newtheorem{example}{Example}
\newcommand{\mg}{\operatorname{\mathfrak{d}}\nolimits}
\newcommand{\spn}{\operatorname{span}\nolimits}
\newcommand{\nn}{\ensuremath{\mathbb{N}}}
\newcommand{\B}{\ensuremath{\mathcal{B}}}
\newcommand{\R}{\ensuremath{\mathcal{R}}}
\newcommand{\ppq}{\leqslant}
\newcommand{\pgq}{\geqslant}
\newcommand{\ov}{\overline}
\newcommand{\car}{\operatorname{char}\nolimits}
\newcommand{\mt}{\ensuremath{\mathfrak{t}}}
\newcommand{\mo}{\ensuremath{\mathfrak{o}}}
\newcommand{\rrad}{\mathfrak{r}}
\newcommand{\op}{\text{op}}
\newcommand{\ot}{\ov\otimes}
\newcommand{\ots}{\ov\otimes\cdots\ov\otimes}
\newcommand{\gldim}{\operatorname{gldim}\nolimits}
\renewcommand{\ker}{\operatorname{Ker}\nolimits}
\newcommand{\im}{\operatorname{Im}\nolimits}
\newcommand{\Ext}{\operatorname{Ext}\nolimits}
\newcommand{\HH}{\operatorname{HH}\nolimits}
\newcommand{\Hom}{\operatorname{Hom}\nolimits}
\newcommand{\ld}[1]{\tensor*[^{*}]{#1}{}}
\newcommand{\lo}[1]{\tensor*[^{\perp}]{#1}{}}
\newcommand{\lkd}[1]{\tensor*[^{\natural}]{#1}{}}
\newcommand{\ulkd}[1]{\tensor*[^{!}]{#1}{}}
\newcommand{\ssq}{\ensuremath{\Lambda _0}} 
\newcommand{\set}[1]{\left\{ #1 \right\}}
\newcommand{\rep}[1]{\langle{#1}\rangle} 
\newcommand{\abs}[1]{\left|{#1}\right|} 
\newcommand{\df}[1]{{\itshape#1}}
\newcommand{\prj}{\ensuremath{\mathbf{P}}}
\newcommand{\diff}{b}
\newcommand{\rln}{\ensuremath{\mathcal{S}}}
\newcommand{\tns}{\ensuremath{\mathbb{T}}}
\newcommand{\aloop}{$A$-loop\xspace}
\newcommand{\acycle}{$A$-cycle\xspace}
\newcommand{\apath}{$A$-path\xspace}
\newcommand{\atrail}{$A$-trail\xspace}
\newcommand{\alength}{$A$-length\xspace}
\newcommand{\asubpath}{$A$-subpath\xspace}
\newcommand{\asubcycle}{$A$-subcycle\xspace}
\newcommand{\aloops}{$A$-loops\xspace}
\newcommand{\apaths}{$A$-paths\xspace}
\newcommand{\atrails}{$A$-trails\xspace}
\newcommand{\asubpaths}{$A$-subpaths\xspace}
\newcommand{\cQ}{{\mathcal Q}}
\newcommand{\addappendix}[1]{%
  \section*{\appendixname.\  #1}
  \addcontentsline{toc}{section}{\appendixname}
  \stepcounter{section}
  \renewcommand{\thesection}{A}
}
\begin{document}

\title[A combinatorial characterisation of (Fg)]
{A combinatorial characterisation of $d$-Koszul and $(D,A)$-stacked monomial algebras that satisfy \textbf{(Fg)}}

\author[Jawad]{Ruaa Jawad}
\address{Ruaa Jawad\\
Technical Instructors Training Institute, Middle Technical University \\
Baghdad, Iraq}
\email{ruaa.yousuf.jawad@mtu.edu.iq}
\author[Snashall]{Nicole Snashall}
\address{Nicole Snashall\\
School of Computing and Mathematical Sciences\\
University of Leicester \\
University Road \\
Leicester LE1 7RH \\
United Kingdom}
\email{njs5@leicester.ac.uk}
\author[Taillefer]{Rachel Taillefer}
\address{Rachel Taillefer\\ Universit\'e Clermont Auvergne, CNRS, LMBP, F-63000 Clermont-Ferrand\\
France}
\email{rachel.taillefer@uca.fr}

\thanks{Some of these results formed part of the first author's PhD thesis at the University of Leicester, which was supported by The Higher Committee For Education Development in Iraq (HCED) Reference D1201116.}

\makeatletter
\@namedef{subjclassname@2020}{%
  \textup{2020} Mathematics Subject Classification}
\makeatother

\subjclass[2020]{16G20, 
16E40, 
16S37, 
16E65, 
16E30. 
}
\keywords{$d$-Koszul, Ext algebra, Hochschild cohomology, finiteness condition, $(D,A)$-stacked.}

\begin{abstract}
Condition \textbf{(Fg)} was introduced in \cite{EHSST} to ensure that the theory
of support varieties of a finite dimensional algebra, established by Snashall and Solberg, has some similar properties to that of a group algebra. In this paper we give some easy to check combinatorial conditions that are equivalent to \textbf{(Fg)}  for monomial $d$-Koszul algebras. We  then extend this to monomial $(D,A)$-stacked algebras. We also extend the description of the Yoneda algebra of a $d$-Koszul algebra in \cite{GMMVZ} to $(D,A)$-stacked monomial algebras.
\end{abstract}

\date{\today}
\maketitle

\section*{Introduction}

Let $\Lambda $ be an indecomposable finite dimensional algebra over a field $K$. 
Support varieties for modules over  $\Lambda $ were introduced by  Snashall and Solberg in \cite{SS}, as a geometric tool to study the representation theory of $\Lambda $, using the  Hochschild cohomology $\HH^*(\Lambda )$. It was then proved in \cite{EHSST} that many of the properties of support varieties for group algebras have analogues in this more general case, provided some finiteness conditions hold. These are now known as \textbf{(Fg)} and can be expressed  in the following way. Let $\rrad$ be the Jacobson radical of $\Lambda $ and let $E(\Lambda )=\Ext_\Lambda ^*(\Lambda /\rrad,\Lambda /\rrad)$ be its Yoneda algebra. Then Condition \textbf{(Fg)} states that: 
\begin{center}
\textbf{(Fg)}   \begin{minipage}{0.8\linewidth}
    \begin{center}
there is a commutative Noetherian graded subalgebra $H$ of $\HH^*(\Lambda )$ with $H^0=\HH^0(\Lambda )$ such that
 $E(\Lambda )$ is a finitely generated $H$-module.
\end{center}
\end{minipage}
\end{center}
In particular, it was shown in \cite{EHSST} that if $\Lambda $
satisfies Condition \textbf{(Fg)}, then $\Lambda $ is necessarily Gorenstein,
that the variety of a module is trivial if and only if the module has finite
projective dimension, and periodic modules can be characterised up to projective
summands as those whose support variety is a line. Moreover, the converse of the first result mentioned was proved for monomial algebras in \cite{dgt}, that is, a Gorenstein monomial algebra satisfies \textbf{(Fg)}.

Support varieties for group algebras have been  very effective in the study of the representations of these algebras. Therefore Condition \textbf{(Fg)} has been much studied as it ensures a similarly useful theory of support varieties for finite dimensional algebras. For instance,  Condition \textbf{(Fg)} is invariant under various constructions, such as derived equivalence or singular equivalence of Morita type, see \cite{KPS,Sk,PSS}. Condition \textbf{(Fg)} has been studied or shown to hold for large families of algebras in \cite{ST1,W,ST2,Erdmann} among others, and support varieties have been studied for algebras that satisfy \textbf{(Fg)}, see for instance \cite{FS,SchrollS}. 

Since Hochschild cohomology is generally very difficult to compute, Condition \textbf{(Fg)}  can be difficult to establish for a given algebra. It is therefore useful to have necessary, sufficient or equivalent conditions for \textbf{(Fg)} to hold for a given algebra. One such result was proved by Erdmann and Solberg in \cite{ES}, where they showed that if \textbf{(Fg)} holds for $\Lambda $, then the graded centre $Z_{\text{gr}}(E(\Lambda ))$ of the Yoneda algebra is a Noetherian algebra and $E(\Lambda )$ is a finitely generated $Z_{\text{gr}}(E(\Lambda ))$-module; moreover, they proved that this is an equivalence when the algebra $\Lambda $ is Koszul. 
For monomial algebras, \textbf{(Fg)} was proved in \cite{dgt} to be equivalent to the related condition that the  $A_\infty$-centre $Z_\infty(E(\Lambda ))$ is  a Noetherian algebra and $E(\Lambda )$ is a finitely generated $Z_{\infty}(E(\Lambda ))$-module. 
We note that if $\Lambda$ has finite global dimension then both $E(\Lambda )$ and $\HH^*(\Lambda )$ are finite dimensional as vector spaces, and $\Lambda$ has \textbf{(Fg)}. Thus we are particularly interested in algebras of infinite global dimension.

The aim of this paper is to prove that a number of conditions are equivalent to
\textbf{(Fg)} for a large category of algebras, namely finite dimensional
$d$-Koszul, and more generally $(D,A)$-stacked, monomial algebras. This is
motivated in particular by a result of the first author in her PhD
thesis \cite{J}, a result that provides a sufficient and not difficult to check condition for $d$-Koszul monomial algebras to satisfy \textbf{(Fg)}; this result is Theorem \ref{tm:J-sufficient} in this paper. 

Berger introduced $d$-Koszul algebras in \cite{berger} as a natural generalisation of Koszul algebras (which occur as $2$-Koszul algebras). They are the algebras such that the $n$-th projective module in a  minimal projective resolution of $\Lambda /\rrad$
 as a $\Lambda $-module is generated in a specific degree denoted by $\delta (n)$ (with $\delta (n)=n$ if $d=2$). Moreover, they were characterised in \cite{GMMVZ} as the algebras $\Lambda $ that are $d$-homogeneous (that is, their ideal of relations can be generated by a set of homogeneous elements of degree $d$) and such that $E(\Lambda )$ is generated in degrees $0$, $1$ and $2$. The $(D,A)$-stacked monomial algebras, where $D\pgq 2$ and $A\pgq 1$ are integers, were introduced by Green and Snashall in \cite{GS-colloq math}, and those of infinite global dimension were characterised by the same authors in \cite{GS-J Alg} as the monomial algebras  such that the $n$-th projective module in a minimal projective resolution of $\Lambda /\rrad$
 as a $\Lambda $-module is generated in precisely one degree and such that $E(\Lambda )$ is finitely generated (in which case $E(\Lambda )$ is generated in degrees $0$, $1$, $2$ and $3$). In particular, when $A=1$, a $(D,1)$-stacked monomial algebra is $D$-Koszul. Thus $(D,A)$-stacked monomial algebras are natural generalisations of  $d$-Koszul and indeed Koszul monomial algebras.

In this paper, we consider Condition \textbf{(Fg)} for $d$-Koszul monomial algebras and more generally for $(D,A)$-stacked monomial algebras. We introduce some combinatorial conditions \ref{cond:comb-d-koszul} (in the $d$-Koszul case) and \ref{cond:comb-da} (in the $(D,A)$-stacked case) that are easy to check in terms of a minimal set of relations for the algebra $\Lambda $, and we prove that they are equivalent to \textbf{(Fg)} when $K$ is algebraically closed. This gives a very practical way of checking whether a monomial $d$-Koszul or $(D,A)$-stacked algebra satisfies \textbf{(Fg)}, because it is easy to check that a monomial algebra is $d$-Koszul using \cite[Theorem 10.2]{GMMVZ} (recalled in Property \ref{pty:d-covering}) or $(D,A)$-stacked using \cite[Section 3]{GS-colloq math} (recalled in Property \ref{pty:da-stacked-covering}).

To summarise, if  $\Lambda $ is a finite dimensional monomial algebra over an algebraically closed field $K$, which is $d$-Koszul with $d\pgq 2$ or $(D,A)$-stacked with $D\neq 2A$ whenever  $A > 1$, then the following conditions are equivalent:
\begin{enumerate}[\bfseries(C1)]
\item\label{cond:fg} $\Lambda $ satisfies \textbf{(Fg)}.
\item\label{cond:J-da-stacked} $\Lambda $ satisfies some combinatorial
  conditions defined in Condition \ref{cond:comb-d-koszul} when
  $\Lambda $ is $d$-Koszul monomial and in Condition \ref{cond:comb-da} when $\Lambda $ is  $(D,A)$-stacked monomial  (this follows from Theorems \ref{tm:characterisations-fg-d-koszul} and \ref{tm:characterisations-fg-da-stacked}).
\item\label{cond:Zgr} $Z_{\text{gr}}(E(\Lambda ))$ is Noetherian and $E(\Lambda
  )$ is a finitely generated  $Z_{\text{gr}}(E(\Lambda ))$-module (by \cite{ES} and Theorems \ref{tm:characterisations-fg-d-koszul} and \ref{tm:characterisations-fg-da-stacked}).
\item\label{cond:Zgr-light} $E(\Lambda
  )$ is a finitely generated  $Z_{\text{gr}}(E(\Lambda ))$-module (again by Theorems \ref{tm:characterisations-fg-d-koszul} and \ref{tm:characterisations-fg-da-stacked}).
\item\label{cond:Zinf}  $Z_{\infty}(E(\Lambda ))$ is Noetherian and $E(\Lambda )$ is a finitely generated  $Z_{\infty}(E(\Lambda ))$-module (by \cite{dgt}).
\item \label{cond:gorenstein}$\Lambda $ is Gorenstein (by \cite{dgt}).
\end{enumerate}
The assumption that $K$ is algebraically closed is needed for the implication \ref{cond:fg}$\Rightarrow $\ref{cond:Zgr} of \cite{ES} which we use in our proof of \ref{cond:fg}$\Rightarrow $\ref{cond:J-da-stacked}; however, \ref{cond:J-da-stacked} implies \ref{cond:fg} without this assumption.

The paper is organised as follows. In Section \ref{section:background} we give
some background on monomial algebras and the notion of overlaps, as well as on
the Yoneda algebra and the Hochschild cohomology of a monomial algebra. Section
\ref{section:d-koszul} is devoted to the proof of the implications
\ref{cond:J-da-stacked}$\Rightarrow $\ref{cond:fg} and
\ref{cond:Zgr-light}$\Rightarrow $\ref{cond:J-da-stacked} for $d$-Koszul
monomial algebras, which completes the equivalence of all the conditions
above. The first implication relies on a presentation of the Hochschild
cohomology for $(D,A)$-stacked monomial algebras from \cite{GS-colloq math} and
the second one uses a description of the Yoneda algebra $E(\Lambda )$ of a
$d$-Koszul algebra $\Lambda $ as a graded subspace  of the Koszul dual algebra
$\ulkd\Lambda $ from \cite{GMMVZ}. In Section \ref{sec:da-stacked}, we extend
these results to $(D,A)$-stacked monomial algebras  where $D \neq 2A$ whenever $A > 1$. Here again, we use a description of $E(\Lambda )$ as a subspace of an analogue $\lkd \Lambda $ of the Koszul dual of $\Lambda $; this description is detailed and proved in the appendix, and is a generalisation of the corresponding result of \cite{GMMVZ} to $(D,A)$-stacked monomial algebras. 

\subsubsection*{General assumptions.} Throughout the paper, $\Lambda$ is an indecomposable finite dimensional algebra over a field $K$ with $\car (K) \neq 2$ that is not necessarily algebraically closed.  Moreover, we assume that $\Lambda = K\cQ/I$ where $\cQ$ is a finite quiver (it has a finite number of vertices and arrows) and $I$ is an admissible ideal in $K\cQ$. If $\Lambda = K\cQ/I$ is also a monomial algebra then $I$ is generated by a minimal set $\rho$ of paths (monomials) and $\Lambda$ is graded by the length of paths; we denote by $\ell(p)$ the length of a path $p$. Note that paths in any algebra given by quiver and relations are written from left to right. 
For any $j\pgq 0$, we shall denote by $\cQ_j$ the set of paths of length $j$ in $\cQ$.

In order to use the results in \cite{GS-colloq math}, we shall need to assume
that $\gldim \Lambda \pgq 4$. However, if $\Lambda $ is a monomial algebra with
finite global dimension, all the conditions \ref{cond:fg}--\ref{cond:gorenstein}
hold for $\Lambda $ (we note that Condition
\ref{cond:J-da-stacked} is necessarily empty in this case). Therefore we do not lose any generality in making this assumption.

\section{Some background on monomial algebras and their cohomology}\label{section:background}

\subsection{Overlaps}\label{subsec:overlaps}
Keeping the above assumptions, let $\Lambda = K\cQ/I$ be a monomial algebra so that $\Lambda = \oplus_{i\pgq 0}\Lambda_i$ is a graded algebra with the length grading. We denote by $\rrad=\oplus_{i\pgq 1}\Lambda_i$ the radical of $\Lambda $.
An arrow $\alpha$ starts at the vertex $\mathfrak{o}(\alpha)$ and ends at
the vertex $\mathfrak{t}(\alpha)$.
If $p = \alpha_1\alpha_2\cdots \alpha_n$ is a path with
$\alpha_1, \alpha_2, \ldots , \alpha_n$ in $\cQ_1$ then
$\mo(p) = \mathfrak{o}(\alpha_1)$ and $\mt(p) = \mathfrak{t}(\alpha_n)$.

A path $p$ is a {\it prefix} of a path $q$ if there is some path $p'$ such that $q = pp'$; if an arrow $\alpha$ is a prefix of $q$ then we say that $q$ {\it begins} with $\alpha$.
A path $p$ is a {\it suffix} of a path $q$ if there is some path $p'$ such that $q = p'p$; if an arrow $\alpha$ is a suffix of $q$ then we say that $q$ {\it ends} with $\alpha$.

\medskip

We use the concept of overlaps of \cite{GHZ} and \cite{GZ} to describe the minimal projective resolution of $\Lambda _0\cong\Lambda/\rrad$ over $\Lambda$, and to describe the minimal projective resolution of $\Lambda$ over $\Lambda^e$, where $\Lambda^e$ is the enveloping algebra $\Lambda^{\op}\otimes_K\Lambda$ of $\Lambda$. We recall the relevant definitions here using the notation of \cite{GS-colloq math}.

\begin{dfn}\label{defin:overlaps}
\begin{enumerate}
\item A path $q$ overlaps a path $p$ with overlap $pu$ if there are paths
$u$ and $v$ such that $pu = vq$ and $1 \leqslant \ell(u) < \ell(q)$. We illustrate the
definition with the following diagram.
\[\xymatrix@W=0pt@M=0.3pt{
\ar@{^{|}-^{|}}@<-1.25ex>[rrr]_p\ar@{{<}-{>}}[r]^{v} \ar@{{<}-{>}}@<-4.5ex>[rrrr]_{\text{overlap }pu} &
\ar@{_{|}-_{|}}@<1.25ex>[rrr]^q & & \ar@{{<}-{>}}[r]_{u} &
& }\]
Note that we allow $\ell(v) = 0$ here.
\item A path $q$ properly overlaps a path $p$ with overlap $pu$ if $q$ overlaps $p$ and $\ell(v) \geqslant 1$.
\item A path $p$ has no overlaps with a path $q$ if $p$ does not properly overlap $q$
and $q$ does not properly overlap $p$.
\end{enumerate}
\end{dfn}

We now define sets $\R^n$ recursively. Let
\[\begin{array}{lll}
\R^0 & = & \mbox{$\cQ_0$, the set of vertices of ${\mathcal Q}$}\\
\R^1 & = & \mbox{$\cQ_1$, the set of arrows of ${\mathcal Q}$}\\
\R^2 & = & \mbox{$\rho$, the minimal generating set for $I$.}
\end{array}\]
For $n\pgq 3$, the construction is as follows.

\begin{dfn}
\begin{enumerate}
\item For $n \geqslant 3$, we say that $R^2 \in \R^2$ maximally overlaps $R^{n-1} \in \R^{n-1}$
with overlap $R^n = R^{n-1}u$ if
\begin{enumerate}
\item $R^{n-1} = R^{n-2}p$ for some path $p$;
\item $R^2$ overlaps $p$ with overlap $pu$;
\item there is no element of $\R^2$ which overlaps $p$ with overlap being a
proper prefix of $pu$.
\end{enumerate}
We may also say that $R^n$ is a maximal overlap of $R^2 \in \R^2$ with
$R^{n-1} \in \R^{n-1}$.\\
The construction of $R^n$ is illustrated in the following diagram.
\[\xymatrix@W=0pt@M=0.3pt{
\ar@{^{|}-^{|}}@<-1.25ex>[rrr]_{R^{n-2}}\ar@{_{|}-_{|}}@<1.5ex>[rrrrr]^{R^{n-1}} & & &
\ar@{{<}-{>}}[rr]_{p} &
\ar@{_{|}-_{|}}@<3.5ex>[rr]^{R^2} &
\ar@{{<}-{>}}[r]_{u} &
& }\]

\item For $n \geqslant 3$, the set $\R^n$ is defined to be the set of all overlaps $R^n$ formed in this way.
\end{enumerate}
\end{dfn}

We also recall from \cite{GZ} that if $R_1^np = R_2^nq$, for $R_1^n, R_2^n \in \R^n$ and
paths $p, q$, then $R_1^n = R_2^n$ and $p = q$. Any element $R^n$ in $\R^n$ may be expressed
uniquely as $R_j^{n-1}a_j$ and as $b_kR_k^{n-1}$ for some $R_j^{n-1}, R_k^{n-1}
$ in $\R^{n-1}$ and paths $a_j, b_k$. We say that the elements $R_j^{n-1}$ and
$R_k^{n-1}$ {\it occur in} $R^n$.

\subsection{The Ext algebra $E(\Lambda)$}\label{subsec:Ext algebra}
The Ext algebra $E(\Lambda)$ is given by $E(\Lambda) = \Ext^*_{\Lambda}(\ssq, \ssq) = \bigoplus_{n\pgq 0}\Ext^n_{\Lambda}(\ssq, \ssq)$
with the Yoneda product. In the terminology of overlaps, the $n$-th projective module in a minimal projective $\Lambda$-resolution of $\ssq$ is $\bigoplus_{R^n \in\R^n}\mt(R^n)\Lambda$. Then $\Ext^n_{\Lambda}(\ssq, \ssq)$ has a basis indexed by $\R^n$ and $E(\Lambda)$ has a basis indexed by $\bigcup_{n\pgq 0}\R^n$ (see \cite{GZ, GHZ}). We identify $R^n_i \in\R^n$ with the corresponding element of $\Ext^n_{\Lambda}(\ssq, \ssq)$, that is, with the map
$\bigoplus_{R^n \in\R^n}\mt(R^n)\Lambda \to \ssq$ given by
\[\mt(R^n)\lambda \mapsto
\begin{cases}
\mt(R^n_i)\lambda +\rrad & \mbox{if $R^n = R^n_i$}\\
0 & \mbox{otherwise.}
\end{cases}\]

\subsection{The Hochschild cohomology ring $\HH^*(\Lambda)$}\label{subsec:HH}

Let $({\mathcal P}^*, \partial ^*)$ be the minimal projective $\Lambda^e$-resolution of $\Lambda$ from
\cite{B}. We write $\otimes$ for $\otimes_K$ throughout. Then
\[{\mathcal P}^n = \bigoplus_{R^n \in\R^n}\Lambda\mo(R^n)\otimes\mt(R^n)\Lambda.\]
The maps are given as follows.
In odd degrees, if $R^{2n+1} = R_j^{2n}a_j = b_kR_k^{2n} \in \R^{2n+1}$ then
$\partial ^{2n+1}\colon {\mathcal P}^{2n+1} \rightarrow {\mathcal P}^{2n}$
is given by
\[\mo(R^{2n+1})\otimes \mt(R^{2n+1}) \mapsto
\mo(R_j^{2n})\otimes a_j - b_k\otimes \mt(R_k^{2n})\]
where the first tensor lies in the summand
corresponding to $R_j^{2n}$ and the second tensor
lies in the summand corresponding to $R_k^{2n}$.

For even degrees, any element $R^{2n}$ in $\R^{2n}$ may be expressed in
the form $p_jR_j^{2n-1}q_j$ for some $R_j^{2n-1} \in \R^{2n-1}$ and paths $p_j, q_j$
with $n \pgq 1$. Let $R^{2n} = p_1R_1^{2n-1}q_1 = \cdots
= p_rR_r^{2n-1}q_r$ be all expressions of $R^{2n}$ which contain some
element of $\R^{2n-1}$ as a subpath.
Then, for $R^{2n} \in \R^{2n}$, the map
$\partial ^{2n}\colon {\mathcal P}^{2n} \rightarrow {\mathcal P}^{2n-1}$
is given by
\[\mo(R^{2n})\otimes \mt(R^{2n}) \mapsto
\sum_{j=1}^r p_j \otimes q_j\]
where the tensor $p_j \otimes q_j$ lies in the summand of ${\mathcal P}^{2n-1}$
corresponding to $R_j^{2n-1}$.

If not specified, then it will always be clear from the context in which summand of a
projective module our tensors lie.

The Hochschild cohomology ring $\HH^*(\Lambda)$ of $\Lambda$ is given by
\[\HH^*(\Lambda) = \Ext^*_{\Lambda^e}(\Lambda, \Lambda) = \bigoplus_{n\pgq 0}\Ext^n_{\Lambda^e}(\Lambda, \Lambda)\]
with the Yoneda product.

\section{Characterisations of $d$-Koszul monomial algebras that satisfy \textbf{(Fg)}}\label{section:d-koszul}

\subsection{Notation and properties of $d$-Koszul monomial algebras}

Let  $\Lambda=K\cQ /I $ be a monomial
algebra, where $\cQ $ is a finite quiver and $I$ is an admissible ideal in
$K\cQ $ generated by a minimal set $\rho $ of paths. Recall that the algebra $\Lambda =\bigoplus_{i\pgq 0}\Lambda _i$ is graded by the length of paths.  We can express $\Lambda $
 as a quotient $\Lambda =\tns_{\Lambda _0}(\Lambda _1)/I$ of the tensor algebra,
 where $\Lambda /\rrad\cong\Lambda _0=K\cQ _0$ and $\Lambda _1=K\cQ _1$ and  $I$ is an ideal generated by
 a minimal set $\rho$ of monomials.  The algebra $\Lambda _0\cong K^{\abs{\cQ _0}}$ is isomorphic to a finite product of copies of the base field $K$; it is therefore a semisimple and commutative $K$-algebra. We denote by $e_i$ the idempotent in $\Lambda _0$ corresponding to the vertex $i$.

Let $d\pgq 2$ be an integer. We assume that $\Lambda $ is a $d$-Koszul algebra, that is,  for any minimal projective right $\Lambda $-module resolution of $\Lambda _0$, the $n$-th projective module is generated in degree $\delta (n)$ where 
\[ \delta (n)=
\begin{cases}
\frac{n}{2}d& \text{if $n$ is even}\\
\frac{n-1}{2}d+1&\text{if $n$ is odd}.
\end{cases}
 \]
It follows that $\Lambda $ is $d$-homogeneous  (that is, $\rho $ consists of paths of length $d$).

The monomial $d$-Koszul  algebras can be characterised as follows. 

\begin{pty}\label{pty:d-covering}\cite[Theorem 10.2]{GMMVZ}
A finite dimensional $d$-homogeneous monomial algebra  $\Lambda=K{\cQ}/I$ is $d$-Koszul if, and only if, $\rho $ is \df{$d$-covering}, that
  is, for any paths $p$, $q$ and $r$ in $\cQ$,
  \[ (pq\in\rho ,\ qr\in\rho ,\ \ell(q)\pgq 1)\Rightarrow (\text{all subpaths of
    $pqr$ of length $d$ are in $\rho $}). \]
\end{pty}

Note that this condition is always satisfied if $d=2$; it is indeed well known that all finite dimensional quadratic monomial algebras are Koszul, see \cite{GZ} and \cite[Corollary 2.4.3]{pp}.

\begin{example}
\label{example:dKoszul}
  Let $\Lambda = K\cQ/I$ where $\cQ$ is the quiver
\[\xymatrix{
\cdot \ar[dr]^{\gamma_3} & & \\
\cdot \ar[u]^{\gamma_2} & \cdot \ar[r]^{\beta} \ar[l]^{\gamma_1} & \cdot\ar@(ur,dr)[]^{\alpha} 
}\]
and the ideal $I$ has minimal generating set $\rho = \{\alpha^3, \gamma_1\gamma_2\gamma_3, \gamma_2\gamma_3\gamma_1, \gamma_3\gamma_1\gamma_2\}$.
Then $\Lambda$ is a $3$-Koszul monomial algebra.
\end{example}

\smallskip

From now on, we assume that $\Lambda=K{\cQ}/I$ is a finite dimensional $d$-Koszul
monomial algebra with $d\pgq 2$.

We have the following consequences of Property \ref{pty:d-covering}.

\begin{csq}\label{csq:d-covering}\cite[Proposition 7.13]{J}
  Let $R_i^n$ be an element in
  $\R^n$. Then all subpaths of $R_i^n$ of length $d$ are in
  $\rho $. 
\end{csq}

\begin{proof}  
The result is proved by induction.  It is clear when $n=2$. Moreover, if $n=3$,
since $R_i^3\in\R^3$ is a maximal overlap of two elements in $\R^2$, it follows
from Property \ref{pty:d-covering}.

Now let $n\pgq 4$ be an integer and take $R_i^n\in \R^n$. Then $R_i^n$ is a
maximal overlap of $R_1^2\in\R^2$ with $R_2^{n-1}\in \R^{n-1}$ so that
$R_i^n=R_2^{n-1}u$ for some path $u$, and $R_2^{n-1}$ is a maximal overlap of
$R_3^2\in\R^2$ with $R_4^{n-2}\in\R^{n-2}$ so that $R_2^{n-1}=R_4^{n-2}u'$ for
some path $u'$. This can be illustrated as follows:
\[\xymatrix@W=0pt@M=0.3pt{\ar@{_{|}-_{|}}@<10ex>[rrrrr]^{R_i^n}
\ar@{^{|}-^{|}}@<-1.25ex>[rrr]_{R_4^{n-2}}
\ar@{_{|}-_{|}}@<4.25ex>[rrrr]^{R_2^{n-1}}
&&\ar@{_{|}-_{|}}@<1.25ex>[rr]^(.35){R_3^2} & \ar@{_{|}-_{|}}@<7ex>[rr]^(.35){R_1^2} \ar@{{<}-{>}}@<-1ex>[r]_{u'} & \ar@{{<}-{>}}@<3.75ex>[r]_(.6){u} 
&}\]Moreover, $\ell(u'u)=\ell(R_i^n)-\ell(R_4^{n-2})=\delta (n)-\delta (n-2)=d$ so $u'u=R_1^2$. By induction, every subpath of $R_2^{n-1}$ of length $d$ is in
$\rho $. Any other subpath of length $d$ of $R_i^n$ is either $u'u=R_1^2\in\R^2$ or  a proper subpath of $R_3^2u$,
therefore it is in $\rho $ by Property \ref{pty:d-covering}. 
We have proved  the induction step.  
\end{proof}

A \df{trail} in $\cQ$ is a path $T=\alpha_1\cdots\alpha_n$ with $n \pgq 1$ such that the arrows $\alpha_i$ are all distinct.
We say that the trail is {\it closed} when $\mt(\alpha_n) = \mo(\alpha_1)$. A path $q$ is said to {\it lie on} the closed trail $T$ if $q$ is a subpath of $T^m$ for some $m \pgq 1$.
We say that two trails are \df{distinct} if neither lies on the other. 

We now have a second consequence of Property~\ref{pty:d-covering}.

\begin{csq}\label{csq:trail}\cite[Proposition 7.14]{J}
Suppose that $T = \alpha_1 \cdots \alpha_n$ is a closed trail in ${\cQ}$ and that $d \pgq n+1$.
Then all paths of length $d$ that lie on the closed trail $T$ are in $\rho$.
\end{csq}

\begin{proof}
Since $\Lambda$ is finite dimensional, there is a path $R_2 \in \rho$ that lies on $T$. Now, $\ell(R_2) = d$ and $d \pgq n+1$ so, without loss of generality, we may suppose that $R_2 =
(\alpha_1\alpha_2 \cdots \alpha_n)^m\alpha_1\alpha_2 \cdots \alpha_s$ for some $1 \ppq s \ppq n$ with $d = nm + s$ and $m\pgq 1$.
Let $p = (\alpha_1\alpha_2 \cdots \alpha_n)^m, q = \alpha_1\alpha_2 \cdots \alpha_s$ and $r = (\alpha_{s+1}\cdots\alpha_n\alpha_1 \cdots \alpha_s)^m$.
Then $pq = R_2 = qr$ and we can apply Property~\ref{pty:d-covering} so that all subpaths of $pqr$ of length $d$ are in $\rho$.
Now, any path of length $d$ that lies on the closed trail $T$ is a subpath of $pqr$ and hence is in $\rho$.
\end{proof}

We now introduce Condition~\ref{cond:comb-d-koszul}. Jawad showed in her PhD thesis \cite{J} that this condition is
sufficient for $\Lambda$ to satisfy \textbf{(Fg)}; we give a proof in Theorem~\ref{tm:J-sufficient} below.

\begin{cond}\label{cond:comb-d-koszul}\cite[Theorems 7.11 and 7.15]{J} We say that a $d$-Koszul monomial algebra $\Lambda $ satisfies Condition \ref{cond:comb-d-koszul}, or \ref{cond:J-da-stacked},  when the following properties (1) and (2) both hold:
\begin{enumerate}[(1)]
\item Let $\alpha$ be a loop in $\cQ_1$. Then $\alpha^d\in\rho $ but there is no
  path in $\rho$ of the form $\alpha^{d-1}\beta $ or $\beta \alpha^{d-1}$
  where $\beta$ is an arrow that is distinct from $\alpha$.
\item Let $T=\alpha _1\cdots\alpha_n$ be a closed trail in $\cQ$ with $n>1$ and
  $\alpha_i\in \cQ_1$ for all $i$ and such that $\rho _T:=\{\alpha_1\cdots\alpha _d,\alpha_2\cdots\alpha _d\alpha_{d+1},\ldots ,\alpha_n\alpha_1\cdots\alpha_{d-1}\}\subseteq\rho $. Then there are no elements in $\rho \setminus\rho_T$ which begin or end with the arrow $\alpha_i$, for all $i$.
\end{enumerate}
\end{cond}

\begin{remark} If $T=\alpha_1 \cdots \alpha_n$ is a closed trail then the subscript $i$ of $\alpha_i$ is taken modulo $n$ within the range $1 \ppq i \ppq n$. Thus $\rho_T$ is the set of all paths of length $d$ that lie on the closed trail $T$.
\end{remark}

\begin{remark} Suppose that Condition~\ref{cond:comb-d-koszul} is non-empty, that is, there is a loop or a closed trail with the given properties. Then the description of the projective modules in Section~\ref{subsec:Ext algebra} using overlaps shows that $\Lambda_0$ has infinite projective dimension as a $\Lambda$-module, and hence $\Lambda$ has infinite global dimension. 
\end{remark}

\subsection{Condition~\ref{cond:comb-d-koszul} is sufficient for $\Lambda $ to satisfy \textbf{(Fg)}}

The proof of Theorem~\ref{tm:J-sufficient} uses the description of the Hochschild cohomology ring modulo nilpotence of a $(D,A)$-stacked monomial algebra from \cite[Theorem 3.4]{GS-colloq math}. We recall the definition of a $(D,A)$-stacked monomial algebra in Subsection~\ref{subsec:da-stacked}.
The Hochschild cohomology ring modulo nilpotence is the quotient $\HH^*(\Lambda)/{\mathcal N}$ where ${\mathcal N}$ is the ideal of $\HH^*(\Lambda)$ that is generated by the homogeneous nilpotent elements. It is well-known that $\HH^*(\Lambda)$ is a graded commutative ring, so, since $\car(K) \neq 2$, every homogeneous element of odd degree squares to zero. Moreover, ${\mathcal N}$ is the set of all nilpotent elements of $\HH^*(\Lambda)$.
Our calculations involving $\HH^*(\Lambda)$ use the minimal projective $\Lambda^e$-resolution $({\mathcal P}^*, \partial ^*)$ of $\Lambda$ from \cite{B}; see Section~\ref{subsec:HH}.

Noting that a $d$-Koszul monomial algebra is a $(d,1)$-stacked monomial algebra (see \cite{GS-colloq math}),
we apply \cite[Theorem 3.4]{GS-colloq math} in the special case where $D=d$ and $A=1$, and this simplifies the hypotheses. Specifically, if there is a closed path $C$ in $\cQ$ with $C^{D/A} \in \rho$ then $C^d \in \rho$ and it is immediate that $C$ has length 1 and is necessarily a loop.

\begin{thm}\cite[Theorems 7.11 and 7.15]{J}\label{tm:J-sufficient}
Let $\Lambda = K\cQ/I$ be a finite dimensional $d$-Koszul
monomial algebra with $d\pgq 2$. Assume that $\Lambda$ satisfies
Condition~\ref{cond:comb-d-koszul}. Then $\Lambda$ satisfies \textbf{(Fg)}.
\end{thm}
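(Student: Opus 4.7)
The plan is to exhibit a commutative Noetherian graded subalgebra $H\subseteq \HH^*(\Lambda )$ with $H^0=\HH^0(\Lambda )$ such that $E(\Lambda )$ is finitely generated as an $H$-module, where $H$ acts via the canonical ring homomorphism $\HH^*(\Lambda )\to E(\Lambda )$ obtained by applying $-\otimes_{\Lambda }\ssq$ to cocycles.

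First I would invoke \cite[Theorem 3.4]{GS-colloq math}: since a $d$-Koszul monomial algebra is $(d,1)$-stacked, that theorem provides an explicit description of $\HH^*(\Lambda )/\mathcal{N}$. Specialising to $D=d$, $A=1$, its non-nilpotent generators in positive degree are concentrated in degree $2$ and are parametrised precisely by the loops of part~(1) and the closed trails of part~(2) of Condition~\ref{cond:comb-d-koszul}. I would define $H$ to be the graded subalgebra of $\HH^*(\Lambda )$ generated over $\HH^0(\Lambda )$ by the finite family of corresponding degree-$2$ classes $\chi _\alpha $ and $\chi _T$. Commutativity of $H$ is immediate, since $\HH^*(\Lambda )$ is graded commutative and all generators sit in even degree; moreover, Condition~\ref{cond:comb-d-koszul} forbids any other element of $\rho $ from sharing arrows with the loops or closed trails in question, so mixed products of distinct generators vanish modulo $\mathcal{N}$ and $H$ is, up to nilpotents, a direct product of finitely many polynomial rings over $\HH^0(\Lambda )$, and hence Noetherian.

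The key step is showing that $E(\Lambda )$ is finitely generated over $H$. Using the basis of $E(\Lambda )$ indexed by $\bigcup _n\R^n$ from Section~\ref{subsec:Ext algebra}, I would argue that for $n$ large enough every $R^n\in \R^n$ has all its length-$d$ subpaths in $\rho $ by Consequence~\ref{csq:d-covering}; a pigeonhole argument over the finitely many arrows then produces a long repeated pattern inside $R^n$. By Consequence~\ref{csq:trail} this pattern traces one of the distinguished loops or closed trails singled out by Condition~\ref{cond:comb-d-koszul}, and the disjointness built into that condition prevents any element of $\rho $ from cutting the pattern short, so once it starts it must persist to the end of $R^n$. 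Comparing the Yoneda action of $\chi _\alpha $ (respectively $\chi _T$) on $E(\Lambda )$ with the overlap recursion describing $\R^n$ then writes $R^n=R^m\cdot \chi ^k$ for some $R^m$ in a finite-dimensional subspace of $E(\Lambda )$ bounded independently of $n$, yielding a finite set of $H$-module generators for $E(\Lambda )$.

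The main obstacle lies in this last comparison, namely tracing the bimodule cocycle representing each $\chi $ through the resolution $({\mathcal P}^*,\partial ^*)$ and matching it against the one-sided resolution of $\ssq$ underlying the overlap description of $E(\Lambda )$ in enough detail to justify the factorisation $R^n=R^m\cdot \chi ^k$. This is precisely where Condition~\ref{cond:comb-d-koszul} does its real work, by guaranteeing that no relation outside the distinguished family can interfere with the extension step; in this way \textbf{(Fg)} is established.
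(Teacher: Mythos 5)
Your proposal follows essentially the same route as the paper's proof: apply \cite[Theorem 3.4]{GS-colloq math} to $\Lambda$ viewed as a $(d,1)$-stacked algebra, take $H$ to be the subalgebra of $\HH^*(\Lambda)$ generated by $\HH^0(\Lambda)$ and the non-nilpotent classes attached to the loops and closed trails of Condition~\ref{cond:comb-d-koszul}, and show by a pigeonhole/overlap analysis that every basis element $R^n$ of high degree factors as a power of one of these classes times a basis element of bounded degree. Two points need correcting or completing. First, the positive-degree generators are \emph{not} all concentrated in degree $2$: the class attached to a closed trail $T$ of length $m$ lives in degree $2m/\gcd(d,m)$. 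Only evenness (which is all you use for commutativity) survives, and the bound on the degrees of the $H$-module generators must accordingly be taken at least $\max_i|x_i|$. Second, the step you single out as the main obstacle --- transporting the bimodule cocycle through $({\mathcal P}^*,\partial^*)$ to the one-sided resolution --- is in fact the routine part: the action of a class in $\HH^n(\Lambda)$ on $E(\Lambda)$ is left Yoneda multiplication by the sum of those $R^n_j$ on which the representing cocycle is nonzero. The genuine work, which your sketch only gestures at, is combinatorial: (a) showing the repeated pattern is a closed trail $T$ with $\rho_T\subseteq\rho$ even when $d\ppq\ell(T)$, where Consequence~\ref{csq:trail} does not apply and one needs a direct maximal-overlap argument; (b) showing Condition~\ref{cond:comb-d-koszul}~(2) forces $R^n$ to lie entirely on $T$, so $R^n=p_1T^qp_2$; and (c) showing the cross terms coming from the other cyclic rotations of $T$ annihilate the chosen low-degree factor (by minimality of the trail), so that multiplication by the single path $T^{d/\gcd(d,m)}$ agrees with the action of the class $x_i$, which is multiplication by a sum over all rotations. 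With those details supplied the argument closes exactly as in the paper.
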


\begin{proof} We keep the notation of Condition~\ref{cond:comb-d-koszul}.

Let $\alpha_1, \dots, \alpha_u$ be the loops in the quiver $\cQ$, and suppose that $\alpha_i$ is a loop at the vertex $v_i$. Since $\Lambda$ is a finite dimensional $d$-Koszul monomial algebra, $\alpha_i^d$ is necessarily in the minimal generating set $\rho$.
By Condition~\ref{cond:comb-d-koszul}~(1), for each $i=1, \dots, u$, there are no elements in $\rho$ of the form $\alpha_i^{d-1}\beta$ or $\beta\alpha_i^{d-1}$ where $\beta$ is an arrow that is distinct from $\alpha_i$.

We need to show that there are no overlaps of $\alpha_i^d$ with any element of $\rho\setminus\{\alpha_i^d\}$. This is immediate if $d=2$, so suppose that $d \pgq 3$.
If $R \in \rho\setminus\{\alpha_i^d\}$ and $R$ overlaps $\alpha_i^d$, then either $R = \alpha_i^sb$ or $R = b\alpha_i^s$ where
$1 \ppq s \ppq d-1$ and $b$ is a path of length $d-s$ which does not begin (respectively, end) with the arrow $\alpha_i$.
Suppose first that $R = \alpha_i^sb$. Then $R$ overlaps $\alpha_i^d$ with overlap of length $2d-s$ as follows:
\[\xymatrix@W=0pt@M=0.3pt{
\ar@{{<}-{>}}[rr]^{\alpha_i^{d-s}} \ar@{^{|}-^{|}}@<-1.25ex>[rrrr]_{\alpha_i^d}& & \ar@{_{|}-_{|}}@<1.5ex>[rrrr]^{R} & & \ar@{{<}-{>}}[rr]_{b} & & }\]
This is a maximal overlap since $\alpha_i$ is not the first arrow of $b$ and thus gives an element $R^3_1 \in {\mathcal R}^3$. However, $\ell(R^3_1) = d+1$ since $\Lambda$ is $d$-Koszul. Thus $2d-s = d+1$ and so $s=d-1$. But then $R = \alpha_i^{d-1}b$ and $b$ is an arrow distinct from $\alpha_i$, which contradicts our hypothesis. The case where $R = b\alpha_i^s$ is similar. So there are no overlaps of $\alpha_i^d$ with any element of $\rho\setminus\{\alpha_i^d\}$. Moreover, as $\Lambda$ is a finite dimensional monomial algebra, it follows that the vertices $v_1, \dots, v_u$ are distinct.

\medskip

Let $T_{u+1}, \dots, T_r$ be the distinct closed trails in $\mathcal{Q}$ such that all paths of length $d$ that lie on these closed trails are contained in $\rho$. For each $i=u+1, \dots,r$,
we write $T_i=\alpha_{i,1}\cdots\alpha_{i,m_i}$, where the $\alpha_{i,j}$ are arrows, and set \[\rho_{T_i}=\{\alpha_{i,1}\cdots\alpha_{i,d}, \alpha_{i,2}\cdots\alpha_{i,d+1},\dots,  \alpha_{i,m_i}\alpha_{i,1}\cdots\alpha_{i,d-1}\}.\] Then $\rho_{T_i}$ is contained in $\rho$.
By Condition~\ref{cond:comb-d-koszul}~(2), for each closed trail $T_i$ ($i=u+1, \dots,r$), there are no elements in $\rho\setminus\rho_{T_i}$ which begin or end with the arrow $\alpha_{i,j}$, for all $j = 1, \dots, m_i$. So no arrow $\alpha_{i,j}$ has overlaps with any element in $\rho\setminus\rho_{T_i}$.

For $i = u+1, \dots , r$, let $T_{i,1},\dots,T_{i,m_i}$ be defined by 
\begin{align*}
&T_{i,1}=T_i=\alpha_{i,1}\alpha_{i,2}\cdots\alpha_{i,m_i}\\
&T_{i,2}= \alpha_{i,2}\alpha_{i,3}\cdots\alpha_{i,m_i}\alpha_{i,1}\\
&\qquad\vdots\\
&T_{i,m_i} =\alpha_{i,m_i}\alpha_{i,1} \cdots\alpha_{i,m_{i-1}}.
\end{align*}
Then the paths $T_{i,1},\dots,T_{i,m_i}$ are all of length $m_i$ and lie on the closed path $T_i$.

\medskip

\sloppy We now describe a  commutative Noetherian graded subalgebra $H$ of $\HH^*(\Lambda )$ with $H^0=\HH^0(\Lambda )$. 
As noted above, $\Lambda$ is a $(d,1)$-stacked monomial algebra. Moreover, Condition
\textbf{(Fg)} is always satisfied if  the global dimension of $\Lambda $ is
finite, therefore we may assume that $\gldim \Lambda \geqslant 4$. 
Hence we can apply \cite[Theorem 3.4]{GS-colloq math}, which gives
$\HH^*(\Lambda)/\mathcal{N}\cong K[x_1,\dots,x_r]/\langle x_ax_b \ \mbox{for} \ a\neq b\rangle$, where
\begin{itemize}
\item for $i=1,\dots,u$, the vertices $v_1,\dots,v_u$ are distinct and the element $x_i$ corresponding to the loop $\alpha_i$ is in degree $2$ and is represented by the map ${\mathcal P}^2\longrightarrow\Lambda$ where for $R^2\in\mathcal{R}^2$,
    \[\mo(R^2)\otimes\mt(R^2)\mapsto \begin{cases} v_i & \mbox{ if}\ R^2=\alpha_i^d \\
    0 & \mbox{ otherwise}
    \end{cases}\]
\item and for $i=u+1,\dots,r$, the element $x_i$ corresponding to the closed trail $T_i=\alpha_{i,1}\cdots\alpha_{i,m_i}$ is in degree $2\mu_i$ such that $\mu_i=m_i/\gcd(d,m_i)$ and is represented by the map ${\mathcal P}^{2\mu_i}\longrightarrow\Lambda$, where for $R^{2\mu_i}\in{\mathcal{R}{^{2\mu_i}}}$,
    \[\mo(R^{2\mu_i})\otimes\mt(R^{2\mu_i})\mapsto
\begin{cases}\mo(T_{i,k}) & \mbox{if}\ R^{2\mu_i}=T^{d/\gcd(d,m_i)}_{i,k}
\mbox{ for all } k=1,\dots,m_i  \\
0 &\mbox{otherwise}.
\end{cases}\]
\end{itemize}

Let $H$ be the subring of $\HH^*(\Lambda)$ generated by $Z(\Lambda)$ and $\{x_1,\dots,x_r\}$.
Since $Z(\Lambda) = \HH^0(\Lambda)$ and $\HH^*(\Lambda)$ is graded commutative, it follows that
\[H = Z(\Lambda)[x_1, \dots , x_r]/\langle x_ax_b \mbox{ for } a \neq b\rangle\] and so
$H$ is a commutative ring.
Moreover, $Z(\Lambda)$ is finite dimensional so is a commutative Noetherian ring. Thus $H$ is a Noetherian ring (see \cite[Corollary 8.11]{Sharp}). 

\medskip

The rest of this proof shows that $\Lambda$ satisfies  \rm\textbf{(Fg)}, with the algebra $H$ that we have just described.
Following the discussion in Section~\ref{subsec:Ext algebra}, we identify $\bigcup_{n\pgq 0}\R^n$ with a basis of $E(\Lambda)$. The action of a homogeneous element $x \in \HH^n(\Lambda)$ on $E(\Lambda)$ is then given by left multiplication by $\sum_jR^n_j$ where the sum is over all $j$ such that $x({\mathfrak o}(R^n_j) \otimes {\mathfrak t}(R^n_j)) \neq 0$.
Thus if $x_i \in \HH^2(\Lambda)$ corresponds to the loop $\alpha_i$, then the action of $x_i$ on $E(\Lambda)$ is given by left multiplication by $\alpha_i^d$. And if $x_i$ in degree $2\mu_i$ corresponds to the closed trail $T_i$, then the action of $x_i$ on $E(\Lambda)$ is given by left multiplication by $\sum_{k=1}^{m_i} T_{i,k}^{d/\gcd(d, m_i)}$.

Set $N=\max\{3, |x_1|,\dots,|x_r|, |\mathcal{Q}_1|\}$.
We show that $\bigcup_{n=0}^N\R^n$ is a generating
set for $E(\Lambda)$ as a left $H$-module and thus $E(\Lambda)$ is finitely generated as a left $H$-module.

Let $R\in \R^n$ with $n>N$. Then $\ell(R) = \delta(n) \pgq 2d$
and we can write $R=a_1a_2\cdots a_{\delta(n)}$ where the $a_i$ are in $\cQ_1$.
From Consequence~\ref{csq:d-covering}, all subpaths of $R$ of length $d$ are in $\rho$, so we may illustrate $R$ with the following diagram:
\[\xymatrix@W=0pt@M=0pt@C=0pt{
\ar@{^{|}-^{|}}@<-1.8ex>[rrrrr] & \, a_1\ar@{_{|}-_{|}}@<1.8ex>[rrrrr] & \, a_2 & \ \cdots \ & a_d\ar@{^{|}-^{}}@<-1.8ex>[rr]\, & \, a_{d+1} & \ \cdots \ & \ar@{^{|}-^{|}}@<-1.8ex>[rrrr] & \, a_{\delta(n)-d+1}&\ \cdots \ & a_{\delta(n)}\, & }\]
Now, $n>N\pgq\abs{\cQ_1}$ so there is some repeated arrow.
Choose $j,k$ with $k$ minimal and $k\pgq 1$ such that $a_{j}$ is a repeated arrow, $a_{j},\dots,a_{j+k-1}$ are all distinct arrows and $a_{j+k}=a_{j}$. Write
\[R=(a_1\cdots a_{j-1})(a_{j}\cdots a_{j+k-1})(a_{j}a_{j+k+1}\cdots a_{\delta(n)}).\]
There are two cases to consider.

{\bf Case (1):} $k=1$. Then $a_{j}=a_{j+1}$ and so $a_{j}$ is a loop. It follows that \[R=(a_1\cdots a_{j-1})(a_{j}a_{j})(a_{j+2}\cdots a_{\delta(n)}).\]

Suppose first that $j \ppq d-1$. Then $j+d-1 \ppq \delta(n)$ so from Consequence~\ref{csq:d-covering}, $a_{j}^2a_{j+2}\cdots a_{j+d-1}$ is in $\rho$.
But $a_j^d \in \rho$ and we have already shown that there are no overlaps of $a_j^d$ with any element of $\rho\setminus \{a_j^d\}$.
Thus $a_j = a_{j+2} = \cdots = a_{j+d-1}$.
Inductively we see that $R=(a_1\cdots a_{j-1})a_{j}^{\delta(n)-j+1}$.
Similarly, $a_1\cdots a_{j-1}a_j^{d-j+1}$ is in $\rho$ and $d-j+1 \pgq 2$. Again, there are no overlaps of $a_j^d$ with any element of $\rho\setminus \{a_j^d\}$ so $a_j = a_1 =  \cdots = a_{j-1}$.
Thus $R=a_{j}^{\delta(n)}$.

Now suppose that $j \pgq d$. Then $j-d+1 \pgq 1$, so by Consequence~\ref{csq:d-covering}, $a_{j-d+1}\cdots a_{j-1}a_j$ is in $\rho$.
As there are no overlaps of $a_j^d$ with any element of $\rho\setminus \{a_j^d\}$, it follows that $a_{j-d+1} = \dots = a_{j-1} = a_j$, and inductively $R = a_{j}^{j+1}(a_{j+2}\cdots a_{\delta(n)})$.
Using Consequence~\ref{csq:d-covering} again, $a_j^{d-1}a_{j+2}$ is in $\rho$ so $a_j = a_{j+2}$.
Inductively, we have $R=a_{j}^{\delta(n)}$.

Hence, for all $j$,
\[R = a_{j}^{\delta(n)} =
\begin{cases}(a^d_{j})^{(n/2)} &\mbox{if $n$ even}\\
(a^d_{j})^{((n-1)/2)}a_{j} &\mbox{if $n$ odd}.
\end{cases} \]

Let $x_i$ be the generator in $H$ corresponding to the loop $a_{j}$, so $1\ppq i\ppq u$ and $|x_i|=2$. Then $x_i$ acts on $E(\Lambda)$ as left multiplication by $a_{j}^d$.
Hence
\[R=\begin{cases}(x_i)^{(n/2)}\mo(a_{j}) &\mbox{if $n$ even}\\
(x_i)^{((n-1)/2)}a_{j} &\mbox{if $n$ odd} \end{cases} \]
with $x_i \in H$, $\mo(a_{j}) \in \R^0$ and $a_j\in\R^1$, so that $\mo(a_{j})$ and $a_j$ are in $\bigcup_{n=0}^N\R^n$.

\medskip

{\bf Case (2):} $k>1$. We note by our choice of $j,k$ that $a_{j}\cdots a_{j+k-1}$ is a closed trail of length $k$, which we denote by $T$. Let $\rho_T$ be the set of all paths of length $d$ which lie on $T$.

The first step is to show that $\rho_T$ is contained in $\rho$.
If $d \pgq k+1$, then this follows from Consequence~\ref{csq:trail}. So, suppose that $d \ppq k$.
Recall that
\[R=(a_1\cdots a_{j-1})(a_{j}\cdots a_{j+k-1})(a_{j}a_{j+k+1}\cdots a_{\delta(n)}).\]
Then: \vspace*{-\baselineskip}
\begin{align*}
&a_ja_{j+1}\cdots a_{j+d-1},\\
&a_{j+1}a_{j+2}\cdots a_{j+d},\\
&\qquad\vdots\\
& a_{j+k-d}a_{j+k-d+1}\cdots a_{j+k-1},\\
&a_{j+k-d+1}a_{j+k-d+2}\cdots a_{j+k-1}a_j
\end{align*}
are all paths of length $d$ which are subpaths of $R$, and so, by Consequence~\ref{csq:d-covering}, are in $\rho$.

Now $a_ja_{j+1}\cdots a_{j+d-1}$ overlaps $a_{j+k-d+1}a_{j+k-d+2}\cdots a_{j+k-1}a_j$.
So there is an element $R^2_1 \in \rho$ such that $R^2_1$ maximally overlaps $a_{j+k-d+1}a_{j+k-d+2}\cdots a_{j+k-1}a_j$ with maximal overlap of length $d+1$.
Then we have that
\[R^2_1 = a_{j+k-d+2}a_{j+k-d+3}\cdots a_{j+k-1}a_ja_{j+1}\]
and this maximal overlap is $\left (a_{j+k-d+1}a_{j+k-d+2}\cdots a_{j+k-1}a_j\right ) a_{j+1} = a_{j+k-d+1}R^2_1$.
Continuing in this way, $a_{j+1}a_{j+2}\cdots a_{j+d}$ overlaps $R^2_1$. So there is an element $R^2_2 \in \rho$ such that $R^2_2$ maximally overlaps $R^2_1$ with maximal overlap of length $d+1$.
So \[R^2_2 = a_{j+k-d+3}a_{j+k-d+4}\cdots a_{j+k-1}a_ja_{j+1}a_{j+2}\]
and this maximal overlap is $R^2_1a_{j+2} = a_{j+k-d+2}R^2_2$.
Inductively, we see that every path of length $d$ on the closed trail $T$ is in $\rho$.
Hence $\rho_T$ is contained in $\rho$.

It follows from Condition~\ref{cond:comb-d-koszul}~(2), that there are no paths in $\rho\setminus\rho_T$ which begin or end with any of the arrows $a_{j}, a_{j+1}, \dots , a_{j+k-1}$.

\medskip

Next we show that $R$ can be written in the form $R=p_1T^qp_2$, where $p_1$ is a suffix of $T$ and $p_2$ is a prefix of $T$.
If $d=2$ then $a_ja_{j+k+1}$ is a subpath of $R$ of length 2 and hence is in $\rho$.
By Condition~\ref{cond:comb-d-koszul}~(2) $a_ja_{j+k+1}$ must be in $\rho_T$ and so $a_{j+k+1}=a_{j+1}$.
Then $a_{j+k+1}a_{j+k+2} = a_{j+1}a_{j+k+2}$ and is a subpath of $R$ of length 2, so we must have that $a_{j+k+2}=a_{j+2}$.
Inductively, we see that $R$ lies on the closed trail $T$. So $R=p_1T^qp_2$, where $p_1$ is a suffix of $T$ and $p_2$ is a prefix of $T$.

So let $d\pgq 3$, and suppose first that $d \ppq k$. Then $a_{j+k-d+2}\cdots a_{j+k-1}a_ja_{j+k+1}$ is a subpath of $R$ of length $d$ which begins with the arrow $a_{j+k-d+2} \in \{ a_{j}, a_{j+1}, \dots , a_{j+k-1}\}$.
So, by Consequence~\ref{csq:d-covering} and Condition~\ref{cond:comb-d-koszul}~(2), this path is in $\rho_T$ and hence $a_{j+k+1} = a_{j+1}$.
Inductively, we have $a_{j+k+2}=a_{j+2},\ a_{j+k+3}=a_{j+3}, \dots$.
Similarly, $a_{j-1}a_j\cdots a_{j+d-2}$ is a
subpath of $R$ of length $d$ which ends with the arrow $a_{j+d-2} \in \{ a_{j}, a_{j+1}, \dots , a_{j+k-1}\}$.
So this path is in $\rho_T$ and hence $a_{j-1} = a_{j+k-1}$.
Inductively, we have $a_{j-2}=a_{j+k-2},\ a_{j-3}=a_{j+k-3}, \dots$.
So we may write $R=p_1T^qp_2$, where $p_1$ is a suffix of $T$ and $p_2$ is a prefix of $T$.

Now suppose that $d \pgq k+1$ (with $d\pgq 3$). We consider $j \ppq d-1$ and $j \pgq d$ separately.
Let $j \ppq d-1$. Then $j+d < \delta(n)$, so
$a_{j+1}a_{j+2}\cdots a_{j+k-1}a_ja_{j+k+1} \cdots a_{j+d}$ is a subpath of $R$ of length $d$ and starts with the arrow $a_{j+1} \in \{ a_{j}, a_{j+1}, \dots , a_{j+k-1}\}$.
So by Consequence~\ref{csq:d-covering} and Condition~\ref{cond:comb-d-koszul}~(2), this path is in $\rho_T$ and hence $a_{j+k+1} = a_{j+1}, a_{j+k+2} = a_{j+2}, \dots$.
So, inductively, we may write $R=(a_1 \cdots a_{j-1})T^qp_2$, where $p_2$ is a prefix of $T$.
Now $a_1a_2 \cdots a_{j-1} \cdots a_d$ is a subpath of $R$ of length $d$ and ends with the arrow $a_d \in \{ a_{j}, a_{j+1}, \dots , a_{j+k-1}\}$.
So by Condition~\ref{cond:comb-d-koszul}~(2), this path is in $\rho_T$ and hence $a_{j-1} = a_{j+k-1}, a_{j-2} = a_{j+k-2}, \dots$. Thus $R=p_1T^qp_2$, where $p_1$ is a suffix of $T$ and $p_2$ is a prefix of $T$.
Finally, suppose $j \pgq d$. Then, we know that $a_{j+k-d}\cdots a_{j-1}a_j \cdots a_{j+k-1}$ is a subpath of $R$ of length $d$ and ends with the arrow $a_{j+k-1} \in \{ a_{j}, a_{j+1}, \dots , a_{j+k-1}\}$.
So by Consequence~\ref{csq:d-covering} and Condition~\ref{cond:comb-d-koszul}~(2), this path is in $\rho_T$ and hence $a_{j-1} = a_{j+k-1}, a_{j-2} = a_{j+k-2}, \dots$.
Also $a_{j+k-d+2} \cdots a_{j-1}a_j \cdots a_{j+k+1}$ is a subpath of $R$ of length $d$ and starts with the arrow $a_{j+k-d+2}$. But we have just shown that $a_{j+k-d+2} \in \{ a_{j}, a_{j+1}, \dots , a_{j+k-1}\}$.
So again, this path is in $\rho_T$ and hence $a_{j+k+1} = a_{j+1}$. Inductively, $a_{j+k+2} = a_{j+2}, \dots$.
Thus $R=p_1T^qp_2$, where $p_1$ is a suffix of $T$ and $p_2$ is a prefix of $T$.

So, in all cases, $R=p_1T^qp_2$, where $T=a_{j}\cdots a_{j+k-1}$, $p_1$ is a suffix of $T$ and $p_2$ is a prefix of $T$.

\medskip

Without loss of generality, relabel the trail $T$ and write $R=T^qp$, where $T=a_1\cdots a_k$, $p$ is a prefix of $T$, $\delta(n)=kq+\ell(p)$, and we choose $\ell(p)$ in the range $1\ppq \ell(p)\ppq k$. Note that $R$ has a repeated arrow so $q \pgq 1$, and if $q=1$ then $\ell(p) \pgq 1$; moreover if $\ell(p) = k$ then $p=T$ and $R=T^{q+1}$.

Let $x_i$ be the generator in $H$ corresponding to this closed trail $T$, so $u+1 \ppq i \ppq r$. Let 
\begin{align*}
&T_{i,1}= T =a_1a_2\cdots a_k;\\
&T_{i,2}= a_2a_3\cdots a_ka_1;\\
&\qquad\vdots\\
&T_{i,k} =a_ka_1 \cdots a_{k-1}.
\end{align*}
The action of $x_i$ on $E(\Lambda)$ is left multiplication by
\[T_{i,1}^{d/\gcd(d,k)}+T_{i,2}^{d/\gcd(d,k)}+\dots+T_{i,k}^{d/\gcd(d,k)}\]
and $|x_i| = 2k/\gcd(d,k)$. Consequently, $N \pgq 2k/\gcd(d,k)$.
Now $R=T^qp$ with $1\ppq\ell(p)\ppq k$. Write $q = \frac{d}{\gcd(d,k)}c + w$ with $0 \ppq w \ppq \frac{d}{\gcd(d,k)}-1$.
Then
\[R=\left ( T^{d/\gcd(d,k)}\right )^{c}(T^wp).\]
Moreover, from the construction of $R$ as a maximal overlap, we see that $T^wp$ is also constructed as a maximal overlap and so corresponds to a basis element of $E(\Lambda)$.
We have $\ell(T^wp) = kw+\ell(p)\ppq k\left ( \frac{d}{\gcd(d,k)}-1\right ) + k = kd/\gcd(d,k) = \delta(2k/\gcd(d,k))$. So $T^wp$ corresponds to a basis element of $E(\Lambda)$ of degree at most $2k/\gcd(d,k)$, that is, $T^wp$ is in $\R^m$ for some $m\ppq N$.

Let $2\ppq l\ppq k$; we show that $T_{i,l}^{d/\gcd(d,k)}(T^wp)=0$ in $E(\Lambda)$.
We have $T_{i,l}=a_la_{l+1} \cdots a_ka_1\cdots a_{l-1}$, $T = a_1a_2\cdots a_k$ and $p=a_1a_2\cdots a_{\ell(p)}$ with $1 \ppq \ell(p) \ppq k$.
If $T_{i,l}^{d/\gcd(d,k)}(T^wp)$ represents a non-zero element in $E(\Lambda)$, then $\mt(a_{l-1}) = \mo(a_1)$ so that $a_1\cdots a_{l-1}$ is a closed trail.
But $l-1 < k$, so this contradicts the minimality of $k$. Hence $T_{i,l}^{d/\gcd(d,k)}(T^wp) = 0$ in $E(\Lambda)$ for $2\ppq l\ppq k$.

A similar argument also shows that \[\left ( \sum_{l=1}^k T_{i,l}^{d/\gcd(d,k)}\right )^{c} = \sum_{l=1}^k\left ( T_{i,l}^{d/\gcd(d,k)}\right )^{c}.\]
Thus \[R=\left ( T^{d/\gcd(d,k)}\right )^{c}(T^wp) = \sum_{l=1}^k\left ( T_{i,l}^{d/\gcd(d,k)}\right )^{c}(T^wp)= \left ( \sum_{l=1}^k T_{i,l}^{d/\gcd(d,k)}\right )^{c}(T^wp).\]
Hence $R = x_i^{c}(T^wp)$ with $x_i$ in $H$, and $T^wp \in \R^m$ for some $m\ppq N$.

\medskip

Hence each $R \in \R^n$ with $n > N$ can be written in the form $hr$ for some $h \in H$ and $r \in \bigcup_{n=0}^N\R^n$. It follows that $\bigcup_{n=0}^N\R^n$ is a generating set for $E(\Lambda)$ as a left $H$-module. Thus we conclude that $\Lambda$ has \rm\textbf{(Fg)}.
\end{proof}

\begin{example}
  We return to Example \ref{example:dKoszul}. Condition~\ref{cond:comb-d-koszul}
  is satisfied: the only closed trails that are not loops are the cycles of length $3$ (whose arrows are the $\gamma _i$); for all of these closed trails $T$, we have $\rho _T=\rho \setminus\set{\alpha ^3}$. Hence by Theorem~\ref{tm:J-sufficient} the algebra $\Lambda
  =K\cQ/I$ satisfies \textbf{(Fg)}.
\end{example}

\subsection{Conditions equivalent to \textbf{(Fg)} for a $d$-Koszul monomial algebra}\label{subsec:cond-equiv-d-Koszul}

Our aim is now to prove the converse, and more precisely, the following theorem.

\begin{thm}\label{tm:characterisations-fg-d-koszul}
  Let $\Lambda $ be an indecomposable finite dimensional $d$-Koszul monomial $K$-algebra with $d \pgq 2$. Consider the following statements:
  \begin{enumerate}
   \item[\ref{cond:fg}] $\Lambda $ satisfies \textbf{(Fg)};
  \item[\ref{cond:J-da-stacked}] Condition~\ref{cond:comb-d-koszul} holds for $\Lambda $;
  \item[\ref{cond:Zgr}] $Z_{\text{gr}}(E(\Lambda ))$ is Noetherian and $E(\Lambda)$ is a finitely generated  $Z_{\text{gr}}(E(\Lambda ))$-module;
 \item[\ref{cond:Zgr-light}] $E(\Lambda )$ is finitely generated as a module over $Z_{\text{gr}}(E(\Lambda ))$.
 \end{enumerate} 
 Then \ref{cond:Zgr-light} implies \ref{cond:J-da-stacked} which in turn implies \ref{cond:fg}. 

Moreover, if the field $K$ is algebraically closed, then the four statements are equivalent.
\end{thm}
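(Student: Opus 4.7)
The strategy is to establish the cycle \ref{cond:Zgr-light}$\Rightarrow $\ref{cond:J-da-stacked}$\Rightarrow $\ref{cond:fg} without restriction on $K$, together with \ref{cond:fg}$\Rightarrow $\ref{cond:Zgr}$\Rightarrow $\ref{cond:Zgr-light} in the algebraically closed case. Of these, \ref{cond:J-da-stacked}$\Rightarrow $\ref{cond:fg} is exactly Theorem~\ref{tm:J-sufficient}; \ref{cond:Zgr}$\Rightarrow $\ref{cond:Zgr-light} is immediate (drop the Noetherian hypothesis); and when $K$ is algebraically closed, \ref{cond:fg}$\Rightarrow $\ref{cond:Zgr} is the Erdmann--Solberg theorem of \cite{ES}. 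The whole content of the theorem is therefore the new implication \ref{cond:Zgr-light}$\Rightarrow $\ref{cond:J-da-stacked}.

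I would prove \ref{cond:Zgr-light}$\Rightarrow $\ref{cond:J-da-stacked} by contraposition: assuming Condition~\ref{cond:comb-d-koszul} fails, exhibit an infinite family of basis elements in $\bigcup_n\R^n$ that cannot all be expressed as $z\cdot r$ for $z\in Z_{\text{gr}}(E(\Lambda))$ and $r$ drawn from a fixed finite subset of $E(\Lambda)$. If part~(1) fails, there is a loop $\alpha$ at some vertex $v$ with $\alpha^d\in\rho$ and, without loss of generality, $\alpha^{d-1}\beta\in\rho$ for some arrow $\beta\neq\alpha$. The candidate family is $\{\alpha^{\delta(n)}\}_{n\geq 1}$, each of which lies in $\R^n$ by Consequence~\ref{csq:d-covering}. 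Since $\alpha^{\delta(n)}$ is a pure power of $\alpha$ at $v$, the monomial structure of the Yoneda product forces, for any equation $\alpha^{\delta(n)}=z\cdot r$ with $z$ homogeneous central of degree $m\geq 1$ and $r$ a basis element, the specific basis element $\alpha^{\delta(m)}$ to occur with nonzero coefficient in $z$. The key lemma I would then prove is that no graded central element of $E(\Lambda)$ has $\alpha^{\delta(m)}$ in its support: comparing $z\omega$ with $\pm\omega z$ for a test element $\omega$ built from $\alpha$ and $\beta$ (for instance $\omega=\beta$ when $m$ is even and $\omega=\alpha^{d-1}\beta$ when $m$ is odd), the overlap of $\alpha^d$ with $\alpha^{d-1}\beta$ yields a basis element ending in $\beta$ in $\R^{m+|\omega|}$ on the $z\omega$ side that cannot be matched by any $\omega R$ with $R$ in the support of $z$, and graded commutativity then forces the offending coefficient to vanish. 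The degree zero contributions $|z|=0$ act as scalars on the finite set of generators, so the infinite family $\{\alpha^{\delta(n)}\}_{n}$ cannot be covered. The subcase $\beta\alpha^{d-1}\in\rho$ is symmetric. If part~(2) of Condition~\ref{cond:comb-d-koszul} fails, with a closed trail $T=\alpha_1\cdots\alpha_n$ such that $\rho_T\subseteq\rho$ and some $R\in\rho\setminus\rho_T$ begins or ends with an $\alpha_i$, a parallel argument applies to the infinite family of basis elements supported on $T$, with the bad arrow from $R$ playing the role of $\beta$ and trail rotations $T_{i,k}$ replacing powers of $\alpha$.

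The main obstacle I foresee is executing the centrality calculation uniformly in the parity of $m=|z|$ when $d>2$, because Yoneda multiplication by an arrow shifts path length by $1$ in one parity but by $d-1$ in the other, so the naive choice $\omega=\beta$ works only for $m$ even; the odd-$m$ case requires a subtler test element, together with a case analysis on whether $\beta$ is itself a loop at $v$. I expect to resolve this uniformly by invoking the description of $E(\Lambda)$ as a graded subspace of the Koszul dual algebra $\ulkd\Lambda$ from \cite{GMMVZ}, which packages both parities in a single multiplicative framework and lets the support of a graded central element be read off combinatorially from the path structure of $\ulkd\Lambda$.
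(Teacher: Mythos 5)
Your proposal is correct and follows essentially the same route as the paper: the same decomposition of implications (with \ref{cond:J-da-stacked}$\Rightarrow$\ref{cond:fg} quoted from Theorem~\ref{tm:J-sufficient} and \ref{cond:fg}$\Rightarrow$\ref{cond:Zgr} from \cite{ES}), and the same contrapositive proof of \ref{cond:Zgr-light}$\Rightarrow$\ref{cond:J-da-stacked} via the Koszul-dual model $B\subseteq\ulkd\Lambda$ of $E(\Lambda)$, its length and multi-gradings, and the key lemmas that no graded central element is supported on powers $\ov\alpha^{\delta(m)}$ of the offending loop (Lemma~\ref{lm:dKoszul-power-loop-Z}), respectively on nontrivial cycles lying on the offending closed trail (Lemma~\ref{lm:dKoszul-power-closed-trail-Z}). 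The only place your sketch compresses real work is the closed-trail case, where one must first reduce to a minimal trail none of whose arrows is a loop and prove (Proposition~\ref{cj-d-trails}) that no relation whose multi-degree is supported on the arrows of $T$ can leave $T$, so as to exclude central elements that are linear combinations of arbitrary cycles on $\ov T$ rather than just the symmetric sum of its rotations; one must also restrict to even cohomological degrees to dispose of possible degree-one central factors, exactly as the paper does.
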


We shall need the description of the Ext algebra $E(\Lambda )$ from \cite{GMMVZ}, which we recall here.

\medskip

Let $\cQ ^\op$ be the opposite quiver of $\cQ $, so that $\cQ ^\op_0=\cQ _0$ and $\cQ _1^\op=\set{\ov\alpha \colon j\rightarrow i\mid 
\text{there is } \alpha \colon i\rightarrow j\text{ in }\cQ _1}$. 

Now consider $\ulkd\Lambda =K\cQ ^\op/J$ with $J=(\lo\rho)$,  where the
orthogonal is taken with respect to the natural bilinear form $K\cQ ^\op_d\times K\cQ _d\rightarrow K$, that is, $\rep{\ov\beta _d\cdots \ov\beta _1,\alpha _1\cdots \alpha _d}$ is equal to $1$ if $\alpha _1\cdots \alpha _d=\beta _1\cdots \beta _d$ and is equal to $0$ otherwise. 
(Recall that, for any $n\pgq0$, $\cQ _n$ denotes the set of paths of length $n$ in $\cQ $.)

If $d=2$, set $B=\ulkd \Lambda $ and if $d\pgq 3$ let $B=\bigoplus _{n\pgq 0}B_n$ be the algebra defined as follows:
\begin{itemize}
\item  $B_n=\ulkd\Lambda_{\delta (n)} $
\item for $x\in B_n$ and $y\in B_m$, define $x\cdot y\in B_{n+m}$ by\[ x\cdot y=
\begin{cases}
0&\text{if $n$ and $m$ are odd}\\xy &\text{(in $\ulkd\Lambda $) if $n$ or $m$ is even.}
\end{cases} 
 \]
\end{itemize} Note that if $n$ or $m$ is even, $\delta (n)+\delta (m)=\delta (n+m)$, 
so that this defines a graded algebra $B$.

Then by \cite{gmv,bgs,GMMVZ}, the algebras $E(\Lambda )$ and $B$ are isomorphic
(for $d\pgq2$).

\bigskip

Since $\Lambda $ is monomial it is easy to see that the algebra $\ulkd\Lambda $
is $d$-homogeneous monomial and that the set $\sigma$ of paths $\ov\alpha _d\cdots\ov\alpha _1\in \cQ _d^\op$ such that $\alpha _1\cdots \alpha _d\in \cQ _d$ is not in $\rho $ is a minimal generating set for $J$ consisting of paths of length $d$. There is a basis $\B_{\ulkd\Lambda }$ of $\ulkd\Lambda $ consisting of all paths $\ov p$ in $\cQ ^\op$ such that no subpath of $\ov p$ is in $\sigma $.
It follows from Consequence \ref{csq:d-covering} that no subpath of length $d$ of $\ov R_i^n$ is in
  $\sigma $. Therefore $\ov R_i^n\in\B_{\ulkd\Lambda } $.

As we mentioned in Subsection \ref{subsec:Ext algebra}, there is a basis of $E(\Lambda )$ indexed by $\bigcup_{n\pgq 0}\R^n$ that corresponds, via the isomorphism with the algebra $B$, to the set of paths $\ov R_i^n$ for $n\pgq 0$ and $R_i^n\in\R^n$.
We then have an embedding of the basis $\bigcup_{n\pgq 0}\ov \R^n$ of $B$ into $\B_{\ulkd\Lambda }$ where $\ov \R^n=\set{\ov R_i^n\mid R_i^n\in\R^n}$; denote by $\B_B$ its image, which is a basis of  $B$.

\bigskip

We now define several gradings on $\Lambda $, $\ulkd \Lambda $ and $B$.

There are natural gradings on $\Lambda $ and on $\ulkd\Lambda $ given by the lengths of paths; denote the length by $\ell$ for both algebras. The degree of a homogeneous element $x$ in $B$ will be denoted by $\abs{x}$, so  $x\in \ulkd\Lambda _{\delta (\abs{x})}$ or, in other terms, $\abs x=k$ if, and only if, $\ell(x)=\delta (k)$.

The algebra $\ulkd\Lambda $ is also multi-graded by $\nn^{\cQ _1}$: for each path $\ov p$ in $\cQ ^\op$, we define an element $\mg(\ov p)=\left(\mg_\alpha (\ov p)\right)_{\alpha \in \cQ _1}\in \nn^{\cQ _1}$ as follows:
\begin{itemize}
\item if $\ell(\ov p)=0$, then $\mg(\ov p)=(0)_{\alpha \in \cQ _1}$
\item if $\ell(\ov p)>0$, then $\mg_\alpha (\ov p)$ is the number of occurrences of $\alpha $ in $p$ (it is $0$ if $\alpha $ does not occur in $p$).
\end{itemize}  Since $\ulkd\Lambda $ is monomial, 
the ideal $J$ is homogeneous with respect to this multi-degree and therefore $\ulkd\Lambda  $ is multi-graded. 
In $B$, if $x$ and $y$ are homogeneous and $\abs x$ or $\abs y$ is even, then $\mg_\alpha (xy)=\mg_\alpha (x)+\mg_\alpha (y)$ but if both degrees are odd then $\mg_\alpha (xy)=0$.

Let $Z:=Z_{\text{gr}}(B )$ be the graded centre of $B$. It is generated as a subring of $B$ by the homogeneous elements $z\in B$ such that for all homogeneous $y\in B$, $zy=(-1)^{\abs{y}\abs{z}}yz$. 
Note that $Z\subset \bigoplus_{e\in \cQ _0}eB e$, therefore $Z$ is generated by  elements that are linear combinations of (non-zero) cycles in $\cQ ^\op$.

Moreover, it can be checked easily that the graded centre $Z$ of $B$ is generated by elements $z$ that are  homogeneous with respect to the grading $\abs \cdot$ and the multi-degree $\mg$ and such that, for any element $y\in B$ that is homogeneous with respect to the grading $\abs\cdot$, we have $zy=(-1)^{\abs y\abs z}yz$.

\begin{remark}
If $d=2$ then $B\cong E(\Lambda )$ is generated in degrees $0$ and $1$, so in
order to check that an element of $B$ is in $Z$, it is sufficient to check that it is a
linear combination of cycles and  that it commutes or anti-commutes with all arrows in
$\cQ ^\op$. 

If $d\pgq 3$, then $B\cong E(\Lambda )$ is generated in degrees $0$, $1$ and
$2$. Therefore when checking that an element is in $Z$, we need to check that it
is a linear combination of cycles and  that it commutes or anti-commutes with paths of degrees $1$ and $2$, that is, arrows and (non-zero) paths of length $d$ in $\cQ ^\op $.
\end{remark}

The proof of Theorem \ref{tm:characterisations-fg-d-koszul} relies on some preliminary results. These are Lemma~\ref{lm:dKoszul-power-loop-Z}, Proposition~\ref{cj-d-trails} and Lemma~\ref{lm:dKoszul-power-closed-trail-Z}. For the first of these, we start with the following observation about loops. Suppose $\alpha $ is a loop in $\cQ_1$.  Since $\Lambda $ is finite dimensional, there
is some integer $N$ such that $\alpha ^N\in I$; therefore $\alpha ^N$ has a
subpath of length $d$ that is in $\rho $ and so $\alpha ^d\in \rho $.  Therefore $\ov\alpha^ d\not\in\sigma $ and it follows that $\ov\alpha ^j\neq 0$ in $\ulkd\Lambda $ for all $j\pgq 0$ and that $\ov\alpha ^{\delta (j)}\neq 0$ in $B$ for all $j\pgq 0$.

\begin{lem}\label{lm:dKoszul-power-loop-Z} Let $\alpha $ be a loop in $\cQ _1$ and
  let $n\pgq 2$. Then 
\begin{itemize}
\item if $d=2$,   $\ov\alpha ^{n}\in Z$ if, and only if,  $n$ is even
  and $\alpha $ satisfies  Condition~\ref{cond:comb-d-koszul}~(1);
\item if $d\pgq 3$,   $\ov\alpha ^{\delta (n)}\in Z$ if, and only if,  $\alpha $ satisfies   Condition~\ref{cond:comb-d-koszul}~(1).
\end{itemize}

\end{lem}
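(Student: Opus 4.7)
My plan is to apply the Remark preceding the lemma, which reduces the condition $\ov\alpha ^{\delta (n)}\in Z$ to checking that $\ov\alpha ^{\delta (n)}$ is a linear combination of cycles in $\cQ ^\op$ (which it already is, being a loop) and that it graded-commutes with a generating set of $B\cong E(\Lambda )$: the arrows $\ov\beta \in \cQ _1^\op$ in all cases, together with the degree-$2$ generators $\ov R$ for $R\in \rho $ when $d\pgq 3$. Since $\alpha ^d\in \rho $ by the observation preceding the statement, $\ov\alpha ^{\delta (n)}$ is a nonzero basis element of $B_n$ at the vertex $v=\mo(\alpha )$, so only finitely many commutation relations need to be verified.

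\smallskip

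The verification would split into three sub-cases. For $\ov\alpha $ itself, both $\ov\alpha ^{\delta (n)}\cdot \ov\alpha $ and $\ov\alpha \cdot\ov\alpha ^{\delta (n)}$ equal $\ov\alpha ^{\delta (n)+1}$ in $\ulkd\Lambda $; the sign $(-1)^n$ then forces $n$ to be even when $d=2$ (where $B=\ulkd\Lambda $ so the products are nonzero), whereas for $d\pgq 3$ the odd-times-odd rule of the $B$-product makes both sides vanish when $n$ is odd, so no parity condition arises. For an arrow $\ov\beta $ with $\beta \neq \alpha $, the product $\ov\alpha ^{\delta (n)}\cdot \ov\beta $ is the reversal of the monomial $\beta \alpha ^{\delta (n)}$, whose length-$d$ subpaths are $\beta \alpha ^{d-1}$ and $\alpha ^d$; Condition~\ref{cond:comb-d-koszul}(1) forces the first out of $\rho $ so the product vanishes, and symmetrically $\ov\beta \cdot\ov\alpha ^{\delta (n)}=0$ via the subpath $\alpha ^{d-1}\beta $. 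For $d\pgq 3$ and $R\in \rho $ with $R\neq \alpha ^d$, a short induction on the position of the first (respectively last) non-$\alpha $ arrow of $R$, combined with Condition~\ref{cond:comb-d-koszul}(1), exhibits a length-$d$ subpath of $\alpha ^{\delta (n)}R$ (respectively $R\alpha ^{\delta (n)}$) of the forbidden form $\alpha ^{d-1}\gamma $ or $\gamma \alpha ^{d-1}$ with $\gamma \neq \alpha $, so both products vanish; when $R=\alpha ^d$ the two products are simply $\ov\alpha ^{\delta (n)+d}$ and coincide. This yields the ``if'' direction in each case.

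\smallskip

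For the ``only if'' direction, I would assume $\ov\alpha ^{\delta (n)}\in Z$ and that Condition~\ref{cond:comb-d-koszul}(1) fails, picking $\beta \neq \alpha $ with $\beta \alpha ^{d-1}\in \rho $ (the case $\alpha ^{d-1}\beta \in \rho $ being symmetric). I would exhibit a non-commutation by testing $\ov\alpha ^{\delta (n)}$ against $\ov\beta $ when this product is nonzero in $B$ (e.g.\ when $d=2$, using also the parity check from the first step) or against the degree-$2$ generator $\ov R=\ov{\beta \alpha ^{d-1}}\in \ov\rho $ in the remaining case, exploiting that $|\ov R|=2$ is even. On one side the composition produces the nonzero monomial supported on the path $\beta \alpha ^{\delta (n)+d-1}$ in $\cQ $, whereas the other side is either zero (when $\beta $ does not start at $v$, so composability fails in $\cQ ^\op$) or supported on $\alpha ^{\delta (n)}\beta \alpha ^{d-1}$, a monomial with $\beta $ in a different position. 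The main obstacle will be exactly this bookkeeping in the sub-case where $\beta $ is itself a second loop at $v$: both sides of the commutator can then be nonzero monomials, and one must verify that their supporting paths in $\cQ ^\op$ are genuinely distinct (the occurrence of $\ov\beta $ sits at different positions within $\ov\alpha ^{\delta (n)+d-1}$), so that the graded difference remains nonzero in $\ulkd\Lambda $. Combining the two directions with the parity analysis from the first step yields the equivalences stated in the lemma, with the parity of $n$ contributing an extra condition only when $d=2$ because in the $d\pgq 3$ case the $B$-product rule absorbs it.
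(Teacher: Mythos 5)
Your proposal is correct and takes essentially the same route as the paper's proof: both reduce to graded commutation with the degree $1$ and degree $2$ generators, use commutation with $\ov\alpha$ itself to force the parity condition when $d=2$, and in the ``only if'' direction test against the degree-$2$ element $\ov\alpha^{d-1}\ov\beta$ (whose even degree kills the sign and the odd-times-odd vanishing), concluding from the fact that two distinct monomials in the monomial algebra $\ulkd\Lambda$ can only agree if both are zero. The loop-versus-loop worry you flag is resolved exactly as you suspect, since $\beta\neq\alpha$ occupies different positions in the two monomials.
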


\begin{proof}

First note that if $n$ is odd, then 
\begin{itemize}
\item if $d=2$, $\ov\alpha ^n\ov \alpha =\ov\alpha ^{n+1}\neq (-1)^{n}\ov\alpha\,
  \ov\alpha ^n$, therefore $\ov\alpha ^{\delta (n)}=\ov\alpha ^n\not\in Z$;
\item if $d\pgq 3$, $\ov\alpha ^{\delta (n)}$ anti-commutes with all arrows since the products are $0$ in
    $B$.
\end{itemize} Therefore we may assume that $n\pgq 2$ is an even integer and that $\ov\alpha ^{\delta (n)}\in Z$.
 Set $e=\mo(\alpha )$.

 Let $\beta $ be an arrow ending at $e$ with $\beta \neq \alpha $. Then, in $B$, 
\[ \ov\alpha ^{\delta (n)+d-1}\ov\beta =\ov\alpha ^{\delta (n)}\cdot\ov\alpha
^{d-1}\ov\beta =\ov\alpha ^{d-1}\ov\beta \cdot\ov\alpha ^{\delta (n)}=\ov\alpha
^{d-1}\ov\beta \ov\alpha ^{\delta (n)} \] (the element $\ov\alpha ^{d-1}\ov\beta
$ is in degree $2$). Therefore we have an equality $ \ov\alpha ^{\delta (n)+d-1}\ov\beta
=\ov\alpha ^{d-1}\ov\beta \ov\alpha ^{\delta (n)}$ between two paths in the monomial algebra $\ulkd\Lambda $,  so that both paths are zero. In particular, $ \ov\alpha ^{\delta
  (n)+d-1}\ov\beta$ contains a subpath in $\sigma $, and since
$\ov\alpha^d\not\in\sigma  $, we must have $\ov\alpha ^{d-1}\ov\beta \in\sigma
$. It follows that $\beta \alpha ^{d-1}\not\in\rho $.

 Similarly, if $\beta $ is an arrow that starts at $e$ with $\beta \neq \alpha $, then $\alpha ^{d-1}\beta \not\in\rho $. 

 Therefore $\alpha $ satisfies Condition~\ref{cond:comb-d-koszul}~(1).

\medskip 

 Conversely
 , assume that  $\alpha $ satisfies Condition~\ref{cond:comb-d-koszul}~(1). 

Let $\beta \neq \alpha $ be an arrow and let $n\pgq 2$. By assumption, $\alpha ^{d-1}\beta
\not\in\rho $ and $\beta \alpha ^{d-1}\not\in\rho $. It follows that $\ov\beta
\ov\alpha ^{d-1}=0=\ov\alpha ^{d-1}\ov\beta $ in $\ulkd\Lambda $ (either in
$\sigma $ or not composable) and therefore that $\ov\alpha ^{\delta (n)}\ov\beta
=0=\ov\beta \ov\alpha ^{\delta (n)}$ since $\delta (n)\pgq\delta (2)\pgq d-1.$ The path $\ov\alpha ^{\delta (n)}$ anticommutes with all elements of degree $1$.

In particular, if $d=2$ and $n$ is even, then $\ov\alpha ^n\in Z$.

 Now assume that $d\pgq 3$ and consider commutation with elements in $B_2$. As a vector space, $B_2$ is generated by the paths $\ov p$ of length $d$ such that $p\in\rho $. Let $p=\beta _1\cdots\beta _d$ be a path in $\rho $. Since $\ov p$ has degree $2$ in $B$, products of elements in $B$ with $\ov p$ in $B$ and in $\ulkd\Lambda $ are equal. 

If $p=\alpha ^d$, then clearly $\ov\alpha ^{\delta (n)}\ov p=\ov p\,\ov \alpha ^{\delta (n)}$.

If $p\neq \alpha ^d$, set $j=\min\set{i\mid 1\ppq i\ppq d,\  \beta _i\neq \alpha }$ and  $k=\max\set{i\mid  1\ppq i\ppq d,\  \beta _i\neq \alpha }$.
By assumption, $\alpha ^{d-1}\beta _j\not\in\rho $ and $\beta _k\alpha ^{d-1}\not\in\rho $, therefore $\ov\beta _j\ov\alpha ^{d-1}=0$ and $\ov\alpha ^{d-1}\ov\beta _k=0$ in $\ulkd\Lambda $.  We have assumed that $n\pgq 2$, so $\delta (n)\pgq d$ and therefore $\ov \alpha ^{\delta (n)}\ov p=0=\ov p\,\ov\alpha ^{\delta (n)}$ in $\ulkd \Lambda $ and in $B$,  and $\ov\alpha ^{\delta (n)}$ anticommutes with elements of degree $2$.

Finally, we have proved that $\ov\alpha ^{\delta (n)}$ is in $ Z$ whenever $d=2$ and $n\pgq 2$ is even or $d\pgq 3$ and $n\pgq 2$.
\end{proof}

For the next result, we need some more terminology for closed trails. 
Let  $n\pgq 2$ and let $T=\alpha _1\cdots \alpha _n$ be a closed trail in $\cQ$ with $\rho_T=\set{\alpha _i\cdots \alpha _{i+d-1}\mid 1\ppq i\ppq n}\subseteq\rho $.
A \df{subcycle} of $T$ is a cycle of the form $q=\alpha _i\cdots \alpha _j$ with $1\ppq i\ppq j\ppq n$ and $\ell(q)<\ell(T)$. 
We say that $T$ has a \df{repeated vertex} if
$T = \alpha_1\cdots\alpha_{i-1} v\alpha_{i}\cdots \alpha_{i+k-1} v\alpha_{i+k}\cdots\alpha_n$
for some $i, k$ and vertex $v$ such that the paths $\alpha_{i}\cdots\alpha_{i+k-1}$ and $\alpha_{i+k}\cdots\alpha_n\alpha_1\cdots\alpha_{i-1}$
are non-trivial paths in $K{\mathcal \cQ }$.

We make the following assumptions that we use in Proposition~\ref{cj-d-trails} and Lemma~\ref{lm:dKoszul-power-closed-trail-Z}. The reason for these specific assumptions becomes clear in the proof of Theorem~\ref{tm:characterisations-fg-d-koszul}.
\begin{enumerate}[(i)]
\item none of the $\alpha _i$ are loops.
\item no subcycle of $T$ satisfies the same assumptions as $T$ (that is, there is no subcycle $q$ of $T$ of length at least $2$ with $\rho _q\subseteq\rho $). 
\end{enumerate}

  \begin{prop}\label{cj-d-trails}
Let $T=\alpha _1\cdots \alpha _n$ be a closed trail with $n\pgq 2$, $\rho
_T\subseteq\rho $ and such  that assumptions (i) and (ii) hold.    Let $p$ be a path of length $d$ such that $\mg_\beta (\ov p)=0$ if $\beta \not\in\set{\alpha _1,\ldots,\alpha _n}$ and which does not lie on $T$. Then $p\not\in\rho $.
  \end{prop}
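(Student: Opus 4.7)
The approach is by contradiction: assume $p\in \rho $, and aim to derive $\rho _{T'}\subseteq \rho $ for some subcycle $T'$ of $T$ of length at least $2$, contradicting assumption (ii).

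First I would locate a jump in $p$. Write $p=\alpha _{i_1}\alpha _{i_2}\cdots \alpha _{i_d}$ with each $i_j\in \{1,\ldots ,n\}$, indices read cyclically modulo $n$. Since $p$ does not lie on $T$ there is some index $s$ with $i_{s+1}\not\equiv i_s+1 \pmod n$. Composability of $p$ forces $\mt(\alpha _{i_s})=\mo(\alpha _{i_{s+1}})$, so $v_{i_s}=v_{i_{s+1}-1}$ and $T$ has a repeated vertex at this position. This splits $T$ into two proper closed subtrails $T_1=\alpha _{i_s+1}\cdots \alpha _{i_{s+1}-1}$ and $T_2=\alpha _{i_{s+1}}\cdots \alpha _{i_s}$ (each a subcycle of $T$ up to cyclic relabelling). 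Assumption (i) rules out any $\alpha _j$ being a loop, so a closed subtrail of length $1$ is impossible; hence both $T_1$ and $T_2$ have length at least $2$.

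The key step is then to apply Property~\ref{pty:d-covering} iteratively. Starting from $p\in \rho $ and $\rho _T\subseteq \rho $, whenever two elements $u,v\in \rho $ share a common suffix-prefix $q$ of length $\pgq 1$, the covering property places every length-$d$ subpath of their overlap into $\rho $. By choosing successive overlaps with suitable cyclic rotations of $T$ (all in $\rho _T$), my aim is to construct an element $p^{*}\in \rho $ lying entirely on $T_2$ (the subcycle containing the two arrows $\alpha _{i_s}$ and $\alpha _{i_{s+1}}$ at the jump). Once $p^{*}\in \rho $ lies on $T_2$, iterating the covering property with $p^{*}$ and further shifts of $p^{*}$ produced by overlapping with elements of $\rho _T$ yields every cyclic rotation of $p^{*}$ along $T_2$, and these together exhaust $\rho _{T_2}$. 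Combined with $\ell(T_2)\pgq 2$, this contradicts assumption (ii).

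The main obstacle is the production of $p^{*}$: one must verify that the iterative overlapping strictly eliminates arrows of $T_1$ from the multi-degrees of the derived $\rho $-elements, yielding in finitely many steps a path whose support lies in $T_2$. The combinatorial input is the periodic structure of $T^{\infty }$ together with the flexibility granted by the jump: near the jump, overlapping a derived $\rho $-element with an appropriate rotation of $T$ lets one swap a block of $T_1$-arrows for a block of $T_2$-arrows without introducing any arrow outside $T$, while the total length $d$ remains unchanged. A secondary subtlety is the final step of recovering all of $\rho _{T_2}$ from $p^{*}$ in the borderline case $d=\ell(T_2)$, where no self-overlap of $p^{*}$ of length $\pgq 1$ is directly available and one must chain through mixed $T$-paths before returning to pure $T_2$-paths.
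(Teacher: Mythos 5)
Your overall strategy is the same as the paper's (locate the first jump in $p$, split $T$ at the resulting repeated vertex into two arcs, show that the arc $T_2$ containing the jump transition satisfies $\rho_{T_2}\subseteq\rho$, and contradict assumption~(ii)), but the step you yourself flag as ``the main obstacle'' --- producing an element of $\rho$ supported on $T_2$ and then all of $\rho_{T_2}$ --- is exactly the content of the proposition, and your sketch of it (an iterative process that ``strictly eliminates arrows of $T_1$'' from multi-degrees) is not carried out and is not how the argument goes. The missing ingredient is a preliminary fact you never establish: \emph{every} closed subtrail of $T$ based at a repeated vertex has length at least $d$. This follows from Consequence~\ref{csq:trail} together with assumption~(ii) (a shorter closed subtrail $q$ would have $\rho_q\subseteq\rho$). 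Knowing only $\ell(T_1),\ell(T_2)\pgq 2$, as you deduce from (i), is not enough to control where the paths produced by your overlaps live.

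Once both arcs have length $\pgq d$, no iteration or elimination is needed. Writing $p=\alpha_j\cdots\alpha_{j+r-1}\alpha_t\gamma_{r+2}\cdots\gamma_d$ with the first jump after position $r$, a single application of Property~\ref{pty:d-covering} to the overlap of $p$ with the $\rho_T$-window $\alpha_{j+r-d}\cdots\alpha_{j+r-1}$ yields $\alpha_{j+r-d+1}\cdots\alpha_{j+r-1}\alpha_t\in\rho$; because the arc $S=\alpha_t\cdots\alpha_{j+r-1}$ has length $\pgq d$, this path already lies on $S$ (it is a window crossing the junction $\alpha_{j+r-1}\to\alpha_t$), so it is your $p^{*}$ in one step. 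A second application of Property~\ref{pty:d-covering}, overlapping $p^{*}$ with $\alpha_t\cdots\alpha_{t+d-1}\in\rho_T$, places all length-$d$ subpaths of $\alpha_{j+r-d+1}\cdots\alpha_{j+r-1}\alpha_t\cdots\alpha_{t+d-1}$ in $\rho$; these are precisely the junction-crossing windows of $S$, and the remaining windows of $S$ are already in $\rho_T$. Hence $\rho_S\subseteq\rho$ with $\ell(S)\pgq d\pgq 2$, contradicting (ii) --- uniformly, with no special treatment of the case $d=\ell(T_2)$ that worries you (that case is handled by the same two overlaps, since the only non-crossing window of $S$ is then $S$ itself, which is in $\rho_T$). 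As written, your proposal records the correct target but leaves the proof of its central claim as an acknowledged gap.
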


  \begin{proof}

Let $p = \gamma_1\cdots \gamma_d$ with $\gamma_i \in \{\alpha_1, \dots , \alpha_n\}$ for $i=1, \dots, d$.
The path $p$ is a non-zero path in $K{ \cQ }$ so $\mathfrak{t}(\gamma_i) = \mathfrak{o}(\gamma_{i+1})$ for all $i$.
Suppose that $\gamma_1 = \alpha_j$ so $p = \alpha_j\gamma_2\cdots \gamma_d$ and $\gamma_2 \in \{\alpha_1, \dots , \alpha_n\}$ with $\mathfrak{t}(\alpha_j) = \mathfrak{o}(\alpha_{j+1}) = \mathfrak{o}(\gamma_2)$.
If $T$ does not have a repeated vertex then necessarily $\alpha_{j+1} = \gamma_2$. Inductively, $p = \alpha_j\alpha_{j+1}\cdots \alpha_{j+d-1}$ and hence $p$ lies on the trail $T$. This contradicts our hypothesis.
Hence $T$ has a repeated vertex.

Suppose that $v$ is a repeated vertex so that $T$ has a proper subpath $q$ of length $k$ with $q = vqv$ for some $k$; thus $2 \leqslant k \leqslant n-1$ since $T$ does not have any loops and $q$ is a closed trail. We claim that $k\geqslant d$. Indeed, if  we had $k<d$, then by Consequence \ref{csq:trail} every path of length $d$ that lies on $q$ would be in $\rho $, that is $\rho _q\subseteq\rho $, with $\ell(q)<\ell(T)$. But this contradicts assumption (ii).
Hence $k \geqslant d$.

Now suppose for contradiction that $p\in\rho$. As above, let $\alpha_j$ be the first arrow in $p$. We know that $p$ does not lie on $T$ and that $T$ has a repeated vertex, so we may write
\[p = \alpha_j\cdots\alpha_{j+r-1}\gamma_{r+1}\cdots \gamma_d\]
where $r \geqslant 1$, $\gamma_{r+1}, \dots, \gamma_d \in \{\alpha_1, \dots , \alpha_n\}$,
$\mathfrak{t}(\alpha_{j+r-1}) = \mathfrak{o}(\alpha_{j+r}) = \mathfrak{o}(\gamma_{r+1})$ and
$\gamma_{r+1}\neq \alpha_{j+r}$. Then there is some $t$ such that $\gamma_{r+1}= \alpha_{t}$ and $t\not\equiv j+r\pmod n$.
Moreover $\mathfrak{t}(\alpha_{t-1}) = \mathfrak{o}(\alpha_t) = \mathfrak{o}(\alpha_{j+r})$. We may illustrate this as follows:
\[\xymatrix{
 & & & \cdot\ar[r] & \cdot\\
\cdot\ar@{.}[r] & \cdot\ar[r]^{\alpha_{j+r-1}} & \cdot\ar[ur]^{\alpha_{j+r}}\ar[d]_{\alpha_t} & & \vdots\\
\cdot\ar[u]\ar@{.}[r] & \cdot & \cdot\ar[l]^{\alpha_{t+1}} & \cdot\ar[ul]_{\alpha_{t-1}} & \cdot\ar[l]
}\]
(We make no assumption as to whether $\gamma_{r+2}$ is or is not equal to $\alpha_{t+1}$.)
We note that the path $\alpha_{j+r}\cdots\alpha_{t-1}\cdot \alpha_t\cdots \alpha_{j+r-1}$ is a cyclic permutation of $T$ and has length $n$. Moreover, from the previous part of this proof, both $\alpha_{j+r}\cdots\alpha_{t-1}$ and $\alpha_t\cdots \alpha_{j+r-1}$ are paths of length at least $d$.

Let $S =\alpha_{t}\alpha_{t+1} \cdots \alpha_{j+r-1}$. Then $S$ is a closed path in $K{\mathcal \cQ }$ of length at least 2 and is a subcycle of $T$.
There is an overlap $\alpha_{j+r-d}\cdots\alpha_j \cdots\alpha_{j+r-1}$ with $p$ so the subpath $\alpha_{j+r-d+1}\cdots\alpha_{j+r-1}\alpha_{t}$ must also be in $\rho$.
Then we have an overlap of $\alpha_{j+r-d+1}\cdots\alpha_{j+r-1}\alpha_{t}$ with
$\alpha_t\alpha_{t+1}\cdots \alpha_{t+d-1}$; by Property~\ref{pty:d-covering}, all subpaths of length $d$ of the path
\[\alpha_{j+r-d+1}\cdots\alpha_{j+r-1}\alpha_t\alpha_{t+1}\cdots \alpha_{t+d-1}\] must also be in $\rho$.
Thus every path of length $d$ that lies on $S$ is in $\rho$.
Hence $\rho_S \subseteq \rho$.

So $S$ is a subcycle of $T$ that satisfies the same assumptions as $T$, and this contradicts assumption~(ii). Hence $p \not\in \rho$.
  \end{proof}

 \begin{remark}\label{rk:da-trails} We keep the assumptions and notation of Proposition~\ref{cj-d-trails}. 
 Then $\ov T$ and all the paths lying on $\ov T$ are in the basis $\B_{\ulkd\Lambda }$
since none of their subpaths of length $d$ are in $\sigma $; those of length
$\delta (k)$ for some $k\pgq0$ are in the basis $\B_B$. 
  
 Set $T_i=\alpha _i\cdots \alpha _n\alpha _1\cdots\alpha _{i-1}$ so that $\ov T_i=\ov\alpha _{i-1}\cdots\ov\alpha_ 1\ov\alpha _n\cdots \ov\alpha _i\in\ulkd\Lambda $ for all $i$. 
 Then for any $j\pgq 1$, we have
\begin{align*}
\ov T_i^j\ov\alpha _k\neq 0&\iff k=i-1\\
\ov \alpha _k\ov T_i^j\neq 0&\iff k=i.
\end{align*}
\end{remark}

\begin{lem}\label{lm:dKoszul-power-closed-trail-Z}
 Let $T=\alpha _1\cdots\alpha _n$ be a closed trail with $n\pgq 2$ and $\rho
 _T\subseteq\rho $ that satisfies assumptions~(i) and (ii)
 and set $z_j=\sum_{i=1}^n\ov T_i^j$ with $nj=\delta (u)$ for some integer $u\pgq2$
 and with $nj=u$ even if $d=2$. Then   $z_j\in Z$ if, and only if,  $T $ satisfies Condition~\ref{cond:comb-d-koszul}~(2).

Moreover, if $T $ does not satisfy  Condition~\ref{cond:comb-d-koszul}~(2), then no element in $ B$ that is  homogeneous with respect to $\abs{\cdot}$ and $\mg$ (when viewed in $\ulkd\Lambda $) and that is a linear combination of non-trivial cycles that lie on $\ov T$ is in $Z$.
\end{lem}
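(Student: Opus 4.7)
For the sufficient direction, Condition~\ref{cond:comb-d-koszul}(2) implying $z_j\in Z$, I would verify, using the description of $Z$ recalled after the Remark above, that $z_j$ has the correct commutation or anti-commutation with the generators of $B$, namely the arrows in $\cQ^{op}_1$ and (when $d\pgq 3$) the length-$d$ elements $\overline{p}$ with $p\in\rho$. For an arrow $\overline{\alpha}_k$ of the trail, Remark~\ref{rk:da-trails} reduces both $z_j\overline{\alpha}_k$ and $\overline{\alpha}_k z_j$ to the single non-zero summands $\overline{T}_{k+1}^j\overline{\alpha}_k$ and $\overline{\alpha}_k\overline{T}_k^j$, which both unfold to the same path $(\overline{\alpha}_k\overline{\alpha}_{k-1}\cdots\overline{\alpha}_{k+1})^j\overline{\alpha}_k$ in $\ulkd\Lambda$; the required sign in $B$ then follows, with both sides vanishing trivially when $u$ is odd. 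For an arrow $\overline{\beta}$ with $\beta\notin\{\alpha_m\}$, a non-vanishing $\overline{T}_i^j\overline{\beta}$ (resp.\ $\overline{\beta}\,\overline{T}_i^j$) would force a length-$d$ path in $\rho\setminus\rho_T$ ending (resp.\ beginning) with an arrow of $T$, directly contradicting Condition~\ref{cond:comb-d-koszul}(2). For degree-two generators $\overline{p}$ with $p\in\rho$, the case $p\in\rho_T$ yields equal single-summand products on both sides (both representing the same long trail path in $\ulkd\Lambda$), while $p\in\rho\setminus\rho_T$ is again ruled out by Condition~\ref{cond:comb-d-koszul}(2).

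For the converse direction and the ``moreover'' statement, I argue by contrapositive. Assume Condition~\ref{cond:comb-d-koszul}(2) fails and pick $r\in\rho\setminus\rho_T$ which, without loss of generality, ends with some arrow $\alpha_l$ of $T$, so $r=\gamma_1\cdots\gamma_{d-1}\alpha_l$. Proposition~\ref{cj-d-trails} guarantees some $\gamma_s\notin\{\alpha_m\}$ (otherwise $r$ would lie on $T$). Applying Property~\ref{pty:d-covering} to the overlap between $r$ and the $\rho_T$-element $\alpha_l\alpha_{l+1}\cdots\alpha_{l+d-1}$ at $q=\alpha_l$ shows that every length-$d$ subpath of $\gamma_1\cdots\gamma_{d-1}\alpha_l\alpha_{l+1}\cdots\alpha_{l+d-1}$ lies in $\rho$; in particular the subpath starting at position $s$ produces $\tilde{r}:=\gamma_s\alpha_{l+s-d+1}\cdots\alpha_{l+s-1}\in\rho\setminus\rho_T$, beginning with the external arrow $\gamma_s$.

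Let $f=\sum_i c_i\overline{T}_i^j$ be non-zero in $B$ (homogeneity in $|\cdot|$ and $\mg$ is automatic, since all $\overline{T}_i^j$ share these data). If $u=|f|$ is even, commutation of $f$ with each $\overline{\alpha}_k$ takes place in $\ulkd\Lambda$, and matching the single-summand outputs forces $c_{k+1}=c_k$ for every $k$, so $f=c\,z_j$ for some $c\neq 0$; using $\overline{\gamma}_s$ as a witness, $\tilde{r}$ yields the non-zero summand $c\,\overline{T}_{l+s-d+1}^j\overline{\gamma}_s$ of $f\overline{\gamma}_s$, whose underlying basis path carries $\overline{\gamma}_s$ at its right end and hence cannot coincide with any summand of $\overline{\gamma}_s f$ (where $\overline{\gamma}_s$ occurs at the left), so $f\notin Z$. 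If $u$ is odd, the arrow tests vanish trivially in $B$, but degree-two tests survive: elements $\overline{p}$ with $p\in\rho_T$ impose the orbit constraints $c_l=c_{l+d}$ (indices modulo $n$), and each derived $\tilde{r}_t:=\gamma_t\cdots\gamma_{d-1}\alpha_l\cdots\alpha_{l+t-1}$ that lies in $\rho\setminus\rho_T$ contributes the non-cancellable summand $c_{l+t}\,\overline{T}_{l+t}^j\overline{\tilde{r}_t}$ to $f\overline{\tilde{r}_t}$, forcing $c_{l+t}=0$. The main obstacle in this plan is this odd-$u$ case: one must verify that the combined coefficient vanishings and orbit identifications kill every orbit of $l\mapsto l+d$ on $\mathbb{Z}/n\mathbb{Z}$, which requires careful accounting of which $\gamma_t$ break trail alignment, together with the symmetric treatment when $r$ begins rather than ends with an arrow of $T$.
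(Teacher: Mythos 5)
Your first paragraph (Condition~\ref{cond:comb-d-koszul}~(2) implies $z_j\in Z$) follows the paper's argument and is sound. The gaps are all in the converse/``moreover'' part. First, the ``moreover'' statement quantifies over \emph{all} $\abs{\cdot}$- and $\mg$-homogeneous linear combinations of non-trivial cycles lying on $\ov T$; when $T$ has a repeated vertex these include partial windings $\ov T_i^{j-1}\ov\alpha_{i-1}\cdots\ov\alpha_{i-s}$ with $1\ppq s<n$, whereas your $f=\sum_i c_i\ov T_i^j$ only treats the case $s=n$. The paper disposes of $s<n$ by showing that commutation with the trail arrows (resp.\ with the paths $\ov\alpha_{k+d-1}\cdots\ov\alpha_k$ when $\abs z$ is odd) forces either all coefficients to vanish or $s=n$; this step is absent from your plan.

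Second, the odd-degree case you flag as ``the main obstacle'' is exactly where your route fails to close, and the missing ingredient is arithmetic rather than combinatorial: for $s=n$ and $\abs z=u$ odd one has $nj=\delta(u)=\frac{u-1}{2}d+1$, hence $\gcd(n,d)=1$ and $l\mapsto l+d$ has a single orbit on $\mathbb{Z}/n\mathbb{Z}$. The constraints $c_l=c_{l+d}$ coming from $\rho_T$ alone therefore already force $f$ to be a scalar multiple of $z_j$, and one then concludes by feeding $z_j\in Z$ back into the forward implication of the equivalence (which you do have), rather than by hunting for external witnesses $\tilde r_t$ and killing each orbit separately. Two smaller, fixable points: your $\tilde r$ has the claimed form $\gamma_s\alpha_{l+s-d+1}\cdots\alpha_{l+s-1}$ only if $s$ is chosen \emph{maximal} among the indices with $\gamma_s\notin\set{\alpha_1,\ldots,\alpha_n}$, and one must then argue (by applying Proposition~\ref{cj-d-trails} to each length-$d$ window in turn) that $\gamma_{s+1},\ldots,\gamma_{d-1}$ are forced to equal the consecutive trail arrows $\alpha_{l-d+1+s},\ldots,\alpha_{l-1}$; and when $d=2$ the product of odd-degree elements with arrows does \emph{not} vanish in $B$, so the odd-degree case of the ``moreover'' statement for $d=2$ needs the separate sign argument $\lambda_1=(-1)^{n\abs z}\lambda_1=-\lambda_1$ rather than the degree-two tests you propose.
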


\begin{proof}
 First assume that $z_j\in Z$. Fix an integer $i$ and let $e=\mt(\alpha _i)$. Suppose for contradiction that there is a path $p$  of length $d-1$
  starting at $e$ such that $\alpha _i p \in\rho $ and
  $ p \neq \alpha _{i+1}\cdots\alpha _{i+d-1}$. By Remark \ref{rk:da-trails}, at least one arrow in $ p $ is not in
  $\set{\alpha _1,\ldots,\alpha _n}$, therefore $\ov p\,\ov\alpha _i  $ is not a subpath of any of the paths that occur in $z_j$ (note that $\ell(z_j)\pgq \delta (2)=d\pgq\ell(\ov p)$). The relation
  $\ov{\alpha _i p }z_j=z_j\ov{\alpha _i p }$ in $B$ becomes, in $\ulkd\Lambda $,
  \[ \ov p\, \ov\alpha _i\ov T_i^j=z_j\ov p \,\ov\alpha _i.\]
  Since $\ulkd\Lambda $ is monomial, it follows that
  $\ov p\,\ov \alpha _i\ov T_i^j$ contains a subpath in $\sigma $, that is, there is a subpath of length $d$ of $T_i^j\alpha _ip$ that is not in $\rho $. It cannot be a subpath of $T_i^j\alpha _i$ because we have assumed that $\rho _T\subseteq\rho $. Moreover, we have assumed that $\alpha _ ip  \in \rho $. We also have $\alpha _{i-d+1}\cdots \alpha _{i-1}\alpha _i\in\rho _T\subseteq \rho $ and $\rho$ is $d$-covering (Property~\ref{pty:d-covering}), therefore every subpath of length $d$ of $\alpha _{i-d+1}\cdots \alpha _{i-1}\alpha _i p$ is also in $\rho $ and so is every subpath of length $d$ of $T_i^j\alpha _i p$ and we  have obtained a contradiction. Therefore $T$ satisfies Condition~\ref{cond:comb-d-koszul}~(2).

 Conversely
 , assume that $T$ satisfies Condition~\ref{cond:comb-d-koszul}~(2). We prove that for all $j\pgq 1$ such that $nj=\delta (u)$ for
  some integer $u\pgq 2$, and such that $nj=u$ is even if $d=2$, we have $z_j\in Z$.

  \begin{itemize}
  \item First note that if $d\pgq 3$ and $\abs{z_j}$ is odd, then $z_j$ anti-commutes
    with all arrows (the products are $0$ in $B$). Therefore assume that
    $\abs{z_j}$ is even and that $d\pgq2$,   and let $\beta $ be an arrow.

    If $\beta =\alpha _k$ for some $k$, then
    $\ov\alpha _kz_j=\ov\alpha _k\ov T_k^j$ and
    $z_j\ov\alpha _k=\ov T_{k+1}^j\alpha _k$ using Remark \ref{rk:da-trails}, and these paths are indeed equal, so that
    $\ov\beta z_j=z_j\ov\beta =(-1)^{\abs{z_j}\abs{\ov\beta }}z_j\ov \beta $.

    If $\beta \not\in\set{\alpha _1,\ldots,\alpha _n}$ then
    $\ov \beta \,\ov T_i^j=0=\ov T_i^j\ov \beta $ for all $i$ by assumption,
    therefore $\ov \beta z_j=0=(-1)^{\abs{z_j}\abs{\ov\beta }}z_j\ov\beta $.
  \item Now assume that $d\pgq 3$ (and $\abs {z_j}$ is still even) 
  and consider commutation with elements in
    $B_2$. As a vector space, $B_2$ is generated by the paths $\ov p$ of length
    $d$ such that $p\in\rho $. Let $p=\beta _1\cdots \beta _d$ be a path in
    $\rho $ with $\beta _i\in \cQ _1$ for all $i$. Since $\ov p$ has degree $2$ in
    $B$, products of elements in $B$ with $\ov p$ in $B$ and in $\ulkd\Lambda $
    are equal.

    If $p$ lies on $T$, then $p=\alpha _k\cdots \alpha _{k+d-1}$ for some $k$,
    and it is easy to check that
    $\ov pz_j=z_j\ov p=(-1)^{\abs{z_j}\abs {\ov p}}z_j\ov p$ (as for commutation
    with an arrow).

    If $p$ does not lie on $T$, then by  Condition~\ref{cond:comb-d-koszul}~(2), the
    first and last arrows in $p$ are not in $\set{\alpha _1,\ldots,\alpha
      _n}$.
    Moreover, for all $i$, we have $\alpha _i\beta _1\cdots \beta _{d-1}\not\in \rho $;
    it follows that $\ov p z_j=0$. Similarly, for all $i$, we have
    $\beta _2\cdots \beta _d\alpha _i \not\in \rho $ (since it ends with $\alpha _i$ and is not in $\rho _T$), therefore $z_j\ov
    p=0$. Finally, $\ov p z_j=z_j\ov p=(-1)^{\abs{z_j}\abs {\ov p}}z_j\ov p$.

  \end{itemize} We have proved that $z_j\in Z$.

\medskip

Now  let $z$ be an element in $Z$ that is homogeneous with respect to $\abs{\cdot}$
  and $\mg$ and that is a linear combination of cycles that lie on $\ov
  T$.
  Assume that $\ell(z)>0$; by (i)  $z$ is not a linear combination of arrows so $\abs{z}\pgq 2$. Put
  $z=\sum_{i=1}^m\lambda _ic_i$ with $\lambda _i\in K$ and the $c_i$ cycles in
  $\cQ ^{\op}$ that lie on $\ov T$. Since $z$ is homogeneous with respect to
  $\abs{\cdot}$ and $\mg$ and the $c_i$ lie on $\ov T$, the $c_i$ are cyclic permutations of $c_1$. Up to
  relabelling, we may write
  $c_i=\ov T_i^{j-1}\ov \alpha _{i-1}\cdots \ov \alpha _{i-s}$ for some fixed
  $s$ with $1\ppq s\ppq n$ (and $m=n$).

  We first consider the case where $\abs{z}$ is even. Then we must have
  $\ov\alpha _k z=z\ov\alpha _k$ for all $k$, that is,
  $\sum_{i=1}^n\lambda _i\ov\alpha _k\ov T_i^{j-1}\ov\alpha
  _{i-1}\cdots\ov\alpha _{i-s}=\sum_{i=1}^n\lambda _i\ov T_i^{j-1}\ov\alpha
  _{i-1}\cdots\ov\alpha _{i-s}\ov\alpha _k$.
  Using Remark \ref{rk:da-trails}, this is equivalent to
  \[ \lambda _k\ov \alpha _k\ov T_k^{j-1}\ov\alpha _{k-1}\cdots\ov\alpha
  _{k-s}=\lambda _{k+s+1}\ov T_{k+s+1}^{j-1}\ov\alpha _{k+s}\cdots\ov\alpha
  _{k+1}\ov\alpha _k.\]
  Therefore $\lambda _k=0=\lambda _{k+s+1}$ or $k+s\equiv k\pmod{n}$ (that is,
  $s=n$), and $\lambda _k=\lambda _{k+1}$.

  This is true for all $k$, so if $z\neq 0$ then $z=\lambda
  _1z_j$ 
  with $nj=\ell(z)=\delta (\abs z)$.

  We now consider the case where $\abs{z}$ is odd.

  If $d=2$ then the same reasoning as in the even case shows that
  $\lambda _{k+1}=(-1)^{\abs z}\lambda _k$ for all $k$ with $1\ppq k\ppq n-1$ and
  $\lambda _1=(-1)^{\abs z}\lambda _n$, therefore $\lambda _1=(-1)^{n\abs z}\lambda _1=-\lambda _1$ (because $nj=\ell(z)=\abs z$ is odd hence $n$ is odd) so that $\lambda _k=0$ for all $k$ and
  finally $z=0$.

  Now assume that $d\pgq 3$.  Since $z\in Z$, we \sloppy have
  $\ov\alpha _{k+d-1}\cdots\ov\alpha _k z=z\ov\alpha _{k+d-1}\cdots\ov \alpha
  _k$
  for all $k$, that is,
  $\sum_{i=1}^n\lambda _i\ov\alpha _{k+d-1}\cdots\ov\alpha _k\ov
  T_i^{j-1}\ov\alpha _{i-1}\cdots\ov\alpha _{i-s}=\sum_{i=1}^n\lambda _i\ov
  T_i^{j-1}\ov\alpha _{i-1}\cdots\ov\alpha _{i-s}\ov\alpha
  _{k+d-1}\cdots\ov\alpha _k$.
  Using Remark \ref{rk:da-trails}, this is equivalent to
  \[ \lambda _k\ov\alpha _{k+d-1}\cdots\ov \alpha _k\ov T_k^{j-1}\ov\alpha
  _{k-1}\cdots\ov\alpha _{k-s}=\lambda _{k+s+d}\ov T_{k+s+d}^{j-1}\ov\alpha
  _{k+s+d-1}\cdots\ov\alpha _{k+d}\ov\alpha _{k+d-1}\cdots\ov\alpha _k.\]
  Therefore $\lambda _k=0=\lambda _{k+s+d}$ or $k+s+d-1\equiv k+d-1\pmod{n}$
  (that is, $s=n$), and $\lambda _k=\lambda _{k+d}$.

  When $s=n$, we have $nj=\ell(z)=\delta (\abs z)=\dfrac{\abs z-1}{2}d+1$,
  therefore $n$ and $d$ are coprime. It follows that if $z\neq 0$, then all the
  $\lambda _i$ are equal so that $z=\lambda _1z_j$.

  We have proved that if $z$ is a non-zero element in $Z$ that is homogeneous
  with respect to $\abs{\cdot}$ and $\mg$ and which is a linear combination of non-trivial 
  cycles that lie on $\ov T$, then if $d=2$ we must have $\abs z$ even and for
  all $d\pgq 2$, $z$ is then a non-zero scalar multiple of $z_j$. Therefore $z_j$ is in $Z$ and by the first part of the proof, $T$ satisfies Condition~\ref{cond:comb-d-koszul}~(2).
\end{proof}

We have now all the tools we need for the proof of Theorem~\ref{tm:characterisations-fg-d-koszul}.

\begin{proof}[Proof of Theorem \ref{tm:characterisations-fg-d-koszul}]
  The fact that \ref{cond:J-da-stacked} implies \ref{cond:fg} is Theorem~\ref{tm:J-sufficient}. 
  The implication \ref{cond:Zgr} $\Rightarrow$ \ref{cond:Zgr-light} is clear and if, in addition, $K$ is algebraically closed, then the implication \ref{cond:fg} $\Rightarrow$ \ref{cond:Zgr} follows from \cite{ES}. 

We now prove that \ref{cond:Zgr-light} implies \ref{cond:J-da-stacked}.
Suppose that \ref{cond:J-da-stacked} does not hold, that is, Condition~\ref{cond:comb-d-koszul} does not hold. Assume for contradiction that $B$ is a finitely generated $Z$-module, generated by elements $\ov g_1,\ldots,\ov g_t$ that are homogeneous with respect to both the  grading $\abs{\cdot}$ and the multi-grading $\mg$.

\bigskip

We first  assume that  Condition~\ref{cond:comb-d-koszul}~(1)  does not hold, so that  there is a
loop $\alpha $ that does not satisfy this condition. Then for all $j\pgq 2$, $\ov\alpha ^{\delta (j)}\not \in Z$ by Lemma \ref{lm:dKoszul-power-loop-Z}.

Now consider $\ov\alpha ^{\delta (k)}\in B$ for some even integer $k\pgq2$.  Then  
\[\mg_\beta (\ov\alpha ^{\delta (k)})=
\begin{cases}
\delta (k)&\text{ if } \beta =\alpha \\0&\text{ if } \beta \neq \alpha 
\end{cases}\]
and $\abs{\ov\alpha ^{\delta (k)}}=k$.
By assumption, and using the fact that $\ov\alpha ^{\delta (k)}$, $\ov g_1$, ..., $\ov g_t$ are homogeneous with respect to $\abs{\cdot}$ and $\mg$, there exist elements $\ov u_{i}^{(k)}$ in $Z$, $1\ppq i\ppq t$, that are homogeneous with respect to $\abs{\cdot}$ and $\mg$, such that $\ov\alpha ^{\delta (k)}=\sum_{i=1}^t\ov u_{i}^{(k)}\ov g_i$. 

Since $\ov\alpha ^{\delta (k)}$ is  homogeneous with respect to $\abs{\cdot}$ and $\mg$, we can assume that for all $i$ we have $\abs{\ov u_{i}^{(k)}\ov g_i}=k$ and $\mg(\ov u_{i}^{(k)})+\mg(\ov g_i)=\mg(\ov u_{i}^{(k)}\ov g_i)=\mg(\ov\alpha ^{\delta (k)})$.  
If $\beta \neq \alpha $ then $\mg_\beta (\ov u_{i}^{(k)})+\mg_\beta (\ov g_i)=0$ so $\mg_\beta (\ov u_{i}^{(k)})=0$ and $\mg_\beta (\ov g_i)=0$. It follows that $\ov u_i^{(k)}$ and $\ov g_i$ are  powers of $\ov\alpha $; since $\ov u_i^{(k)}\in Z$, we must have $\abs{u_i^{(k)}}=0$ or $1$ by assumption. If $\abs {\ov u_i^{(k)}}=1$, then $\abs{\ov g_i}=k-1$ is odd and hence, in $B$, we have $\ov u_i^{(k)}\ov g_i=0$. Therefore we may assume that   $\ov u_{i}^{(k)}\in Z^0=K$. 
It follows that the sum contains only one term and that $\ov g_i$ is a (non-zero) scalar multiple of $\ov\alpha ^{\delta (k)}$ so that $\ov\alpha ^{\delta (k)}\in\spn_ K\set{\ov g_1,\ldots,\ov g_t}$.

We have shown that $\set{\ov\alpha ^{\delta (k)}\mid k\pgq 1,\ k\text{ even}}\subseteq\spn_ K\set{\ov g_1,\ldots,\ov g_t}$. However, using the grading $\abs{\cdot}$, we see that the $\ov\alpha ^{\delta (k)}$, $k\pgq 1$, are linearly independent over $K$: we have reached a contradiction.

Therefore the Yoneda algebra $E(\Lambda )=B$ is not finitely generated as a
$Z$-module when Condition~\ref{cond:comb-d-koszul}~(1) does not hold.

\medskip

We now assume that Condition~\ref{cond:comb-d-koszul}~(2)  does not hold, so that there is a closed trail $T=\alpha _1\cdots\alpha _n $ with $n\pgq 2$ and $\rho _T\subseteq \rho $ that does not satisfy this condition. We can make the following assumptions (and therefore use Proposition \ref{cj-d-trails}):
\begin{enumerate}[(i)]
\item none of the $\alpha _i$ are loops. Indeed, if $\alpha _i$ is a loop, then
  the paths $\alpha _i\alpha _{i+1}\cdots \alpha _{i+d-1} $ and $\alpha _i^d$
  are in $\rho $ and properly overlap, hence $\alpha _i^{d-1}\alpha _{i+1}$
  is in $\rho $ because $\rho$ is $d$-covering, therefore $\alpha $ does not
  satisfy  Condition~\ref{cond:comb-d-koszul}~(1), and in this case we already know that $E(\Lambda )$ is not a finitely generated $Z$-module.
\item no subcycle of $T$ satisfies the same hypotheses as $T$ (otherwise replace $T$ with the shortest such subcycle).

\end{enumerate}

We have seen in Lemma \ref{lm:dKoszul-power-closed-trail-Z} that no linear combination of non-trivial cycles that lie on $\ov T$, that is homogeneous with respect to $\abs{\cdot}$ and $\mg$,  is in $Z$.

Now consider $z_{\delta (k)}=\sum_{i=1}^n\ov T_i^{\delta (k)}\in B$ for some
even integer $k\pgq 2$.  Then $\abs{z_{\delta (k)}}=nk$ and \sloppy $\mg_\beta (z_{\delta (k)})=
\begin{cases}
{\delta (k)}&\text{ if } \beta \in\set{\alpha _1,\ldots,\alpha _n}\\0&\text{ if } \beta \not\in\set{\alpha _1,\ldots,\alpha _n}.
\end{cases}
$

Since $z_{\delta (k)}$ and the $\ov g_i$ are  homogeneous with respect to $\abs{\cdot}$ and $\mg$, 
by assumption there exist elements $\ov u_{i}^{(k)}$ in $Z$, $1\ppq i\ppq t$, that are homogeneous with respect to $\abs{\cdot}$ and $\mg$, such that $z_{\delta (k)}=\sum_{i=1}^t\ov u_{i}^{(k)}\ov g_i$. Note that $Z\subseteq\bigoplus_{e\in \cQ _0}e\ulkd\Lambda e$, therefore the $\ov u_i^{(k)}$ are linear combinations of cycles.

Fix an integer $j$ with $1\ppq j\ppq n$. Then $\ov\alpha _{j+d-1}\cdots\ov \alpha _j\ov T_j^{\delta (k)}=\ov\alpha _{j+d-1}\cdots\ov\alpha _j\ov z_{\delta (k)}=\sum_{i=1}^t\ov\alpha _{j+d-1}\cdots\ov\alpha _j \ov u_{i}^{(k)}\ov g_i$.

Since $\ov\alpha _{j+d-1}\cdots\ov \alpha _j\ov T_j^{\delta (k)}$ is  homogeneous with respect to $\abs{\cdot}$ and $\mg$, we can assume that for all $i$ we have $2+\abs{\ov u_{i}^{(k)}}+\abs{\ov g_i}=\abs{\ov\alpha _{j+d-1}\cdots\ov\alpha _j\ov u_{i}^{(k)}\ov g_i}=\abs{\ov\alpha _{j+d-1}\cdots \ov \alpha _j z_{\delta (j)}}= nk+2$ and $\mg(\ov\alpha _{j+d-1}\cdots\ov \alpha _j)+\mg(\ov u_{i}^{(k)})+\mg(\ov g_i)=\mg(\ov\alpha _{j+d-1}\cdots\alpha _j\ov u_{i}^{(k)}\ov g_i)=\mg(\ov\alpha _{j+d-1}\cdots\ov \alpha _j\ov T_i^{\delta (k)})=\mg(\ov\alpha _{j+d-1}\cdots\ov \alpha _j)+\mg(\ov T_i^{\delta (k)})$, and therefore the only arrows that occur in $\ov u_{i}^{(k)}\ov g_i$ are the $\ov\alpha _j$, $1\ppq j\ppq n$. Moreover, Proposition \ref{cj-d-trails} and the fact that the $\ov u_i^{(k)}$ are linear combinations of cycles show that the $\ov u_i^{(k)}$ must be linear combinations of cycles lying on $\ov T$ (otherwise, one at least of these cycles has a subpath of length $d$ that does not lie on $\ov T$, hence that is in $\sigma $ and this cycle vanishes in $B$ and does not occur in $u_i^{(k)}$).  Our assumption shows that these cycles must be trivial (of length $0$), and since  $Z^0=K$ we see that $\ov u_i^{(k)}\in K$. Therefore $\ov\alpha _{j+d-1}\cdots\ov\alpha _j\ov T_j^{\delta (k)}$ is a linear combination of the $\ov\alpha _{j+d-1}\cdots\ov \alpha _j\ov g_i$, hence $\set{\ov\alpha _{j+d-1}\cdots\ov\alpha _j\ov T_j^{\delta (k)}\mid 1\ppq j\ppq n,\ k\pgq 1,\ k\text{ even}}\subseteq\spn_K\set{\ov\alpha _{j+d-1}\cdots\ov\alpha _j\ov g_i\mid 1\ppq j\ppq n,\ 1\ppq i\ppq t}$.

The set $\set{\ov\alpha _{j+d-1}\cdots\ov\alpha _{J}\ov T_{J}^{\delta (k)}\mid 1\ppq j\ppq n,\ k\pgq 2,\ k\text{ even}}$ is linearly independent over $K$ (using the  grading $\abs{\cdot}$), therefore we have reached a contradiction.

Therefore the Yoneda algebra $E(\Lambda )=B$ is not finitely generated as a
$Z$-module when Condition~\ref{cond:comb-d-koszul}~(2) does not hold. Hence \ref{cond:Zgr-light} $\Rightarrow$ \ref{cond:J-da-stacked}.
\end{proof}

\begin{remark} In the case where $K$ is algebraically closed, we have extended the equivalence between \ref{cond:fg} and \ref{cond:Zgr}, already known for Koszul algebras from \cite{ES}, to  $d$-Koszul monomial algebras with $d\pgq 3$.
In particular, we have the following corollary.
\end{remark}

\begin{corollary}\label{cor:Zgr-light}
  Let $\Lambda $ be a $d$-Koszul monomial algebra over an algebraically closed field with $d\pgq 2$. Assume that
  $E(\Lambda )$ is a finitely generated $Z_{\text{gr}}(E(\Lambda
  ))$-module. Then the algebra $Z_{\text{gr}}(E(\Lambda ))$ is Noetherian.
\end{corollary}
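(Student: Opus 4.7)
The proof is essentially a short chain of implications extracted from Theorem \ref{tm:characterisations-fg-d-koszul}. The plan is to start from the hypothesis \ref{cond:Zgr-light}, use the already-proved implication \ref{cond:Zgr-light}$\Rightarrow$\ref{cond:J-da-stacked} (which does not require $K$ algebraically closed), then apply \ref{cond:J-da-stacked}$\Rightarrow$\ref{cond:fg} (Theorem~\ref{tm:J-sufficient}), and finally invoke the Erdmann--Solberg result \cite{ES} under the algebraically closed hypothesis to conclude \ref{cond:Zgr}, which contains the Noetherian conclusion.

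More concretely, suppose $E(\Lambda)$ is finitely generated over $Z_{\text{gr}}(E(\Lambda))$. By Theorem~\ref{tm:characterisations-fg-d-koszul} this forces Condition~\ref{cond:comb-d-koszul} to hold for $\Lambda$, and then Theorem~\ref{tm:J-sufficient} produces a commutative Noetherian graded subalgebra $H$ of $\HH^*(\Lambda)$ with $H^0=\HH^0(\Lambda)$ over which $E(\Lambda)$ is finitely generated, that is, $\Lambda$ satisfies \textbf{(Fg)}. At this point $K$ being algebraically closed lets us invoke \cite{ES}: for any algebra satisfying \textbf{(Fg)} over an algebraically closed field, $Z_{\text{gr}}(E(\Lambda))$ is Noetherian (and $E(\Lambda)$ is finitely generated over it). This gives the desired conclusion.

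There is essentially no obstacle to this argument: it is a transparent concatenation of implications already established in the excerpt. The only place where the algebraically closed hypothesis is genuinely used is the final invocation of \cite{ES}; everything else works without it. In particular, no further analysis of the combinatorics of $\rho$ or of the Yoneda algebra $B$ is required.
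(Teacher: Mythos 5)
Your proposal is correct and is exactly the argument the paper intends: the corollary is stated as an immediate consequence of the equivalence of \ref{cond:fg}--\ref{cond:Zgr-light} in Theorem~\ref{tm:characterisations-fg-d-koszul}, obtained by chaining \ref{cond:Zgr-light}$\Rightarrow$\ref{cond:J-da-stacked}$\Rightarrow$\ref{cond:fg} and then invoking \cite{ES} over the algebraically closed field to reach \ref{cond:Zgr}. Your identification of where the algebraically closed hypothesis enters (only in the final step via \cite{ES}) also matches the paper's discussion.
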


\section{Extension to $(D,A)$-stacked monomial algebras}\label{sec:da-stacked}

\subsection{Notation and properties of $(D,A)$-stacked monomial algebras}\label{subsec:da-stacked}

Let  $\Lambda=K\cQ /I $ be a monomial
algebra with the length grading as before.
Let $D$ and  $A$ be  integers with $D>A\pgq1$. From \cite[Definition~3.1]{GS-colloq math}, $\Lambda $ is then a $(D,A)$-stacked monomial algebra if, for any minimal projective right $\Lambda $-module resolution of $\Lambda_0$, the $n$-th projective module is generated in degree $\delta _A(n)$ where 
\[ \delta _A(n)=
\begin{cases}
n&\text{if $n=0$ or $n=1$}\\
\frac{n}{2}D& \text{if $n\pgq 2$ is even}\\
\frac{n-1}{2}D+A&\text{if $n\pgq 3$ is odd}.
\end{cases}
 \]
When $A=1$, we retrieve the definition of a $D$-Koszul algebra, so that a
$(D,1)$-stacked monomial algebra is a $D$-Koszul monomial algebra.  

It was shown in \cite[Proposition 3.3]{GS-colloq math} that if $\gldim{\Lambda }\pgq 4$ then
$A$ divides $ D$; in particular, $D\pgq 2A$. 
If  the global dimension of $\Lambda $ is
finite, then Condition \textbf{(Fg)}, and in fact all the conditions \ref{cond:fg}--\ref{cond:gorenstein} stated in the introduction, are satisfied by $\Lambda $. Therefore
we shall assume  throughout this section that $\Lambda$ is a $(D,A)$-stacked monomial algebra with $\gldim \Lambda \pgq 4$ and set $d=\frac{D}{A}$. 
We define 
\[ \delta  (n)=
\begin{cases}
n&\text{if $n=0$ or $n=1$}\\
\dfrac{n}{2}d=\dfrac{\delta _A(n)}A& \text{if $n\pgq 2$ is even}\\
\dfrac{n-1}{2}d+1=\dfrac{\delta _A(n)}{A}&\text{if $n\pgq 3$ is odd}.
\end{cases} \]

\begin{dfn} For $A\pgq 1$, we define an \df{\apath} as a non-zero path $p=\alpha _1\cdots \alpha _n$ where all the $\alpha_i $ are paths of length $A$ (that is, $\alpha_i \in \cQ_A$ for all $i$). An \df{$A$-trail} is an \apath in which all the $\alpha _i$ are distinct. An \df{\acycle} is a closed \apath  and finally an \df{\aloop} is an $A$-cycle of length $A$.

Given an \apath $p$ as above, an \df{\asubpath} of $p$ is an \apath of the form
$\alpha _i\cdots \alpha _j$ with $1\ppq i\ppq j\ppq n$ (note that not every \apath
that is a subpath of $p$ is an \asubpath of $p$). An \df{\asubcycle} of $p$ is
a closed \asubpath of one of the non-zero \apaths  $\alpha _i\cdots\alpha _n\alpha _1\cdots
\alpha _{i-1}$ with $1\ppq i\ppq n$.

We also define the \df{\alength} $\ell_A(p)$ of an \apath $p=\alpha _1\cdots \alpha _n$ where the $\alpha _i$ are paths of length $A$ as $\ell_A(p)=n$, that is, $\ell(p)=A\ell_A(p)$. 
\end{dfn}

We will need the following result from  \cite{GS-colloq math}.

\begin{pty}\cite[Section 3]{GS-colloq math}
\label{pty:da-stacked-covering}
  Let $\Lambda =K\cQ /I$ be a finite dimensional monomial algebra. Then $\Lambda $ is $(D,A)$-stacked  if,
  and only if, $\rho =\R^2$ has the following properties:
\begin{enumerate}[(1)]
\item every path in $\rho $ is of length $D$;
\item if $R_2^2\in\R^2$ properly overlaps $R_1^2\in \R^2$ with overlap $R_1^2u$, then $\ell(u)\pgq A$ and there exists $R_3^2\in \R^2$ which properly overlaps $R_1^2$ with overlap $R_1^2u'$, $\ell(u')=A$ and $u'$ is a prefix of $u$.
\end{enumerate}
\[\xymatrix@W=0pt@M=0.3pt{
\ar@{^{|}-^{|}}@<-2.25ex>[rrr]_{R_1^2}
&\ar@{_{|}-_{|}}@<6ex>[rrrr]^{R_3^2}&
\ar@{_{|}-_{|}}@<1.25ex>[rrrr]^(.35){R_2^2} & \ar@{{<}-{>}}[rrr]_{u} \ar@{{<}-{>}}@<4.75ex>[rr]_(.6){u'} & &
& }\]
\end{pty}

 Therefore $\rho $ consists of paths of length $D$, and if $\Lambda $ is
 $(D,A)$-stacked with $\gldim\Lambda \pgq4$,  we view $\rho $ as a set of
 \apaths of \alength $d$.

 \begin{example}
   \label{example:DAstacked-FS}
We include first an example from \cite{FS} (Example 3.2).
Let $\Lambda = K\cQ/I$ where $\cQ$ is the quiver
\[\xymatrix{
\cdot \ar[r]^{\alpha} & \cdot \ar[d]^{\beta}  \\
\cdot\ar[u]^{\gamma} & \cdot \ar[l]^{\delta}
}\]
and the ideal $I$ has minimal generating set 
$\rho = \{\alpha\beta\gamma\delta\alpha\beta, \gamma\delta\alpha\beta\gamma\delta\}.$
Then $\Lambda$ is a $(6,2)$-stacked monomial algebra.

The closed $2$-trails are all the paths of length $4$.
 \end{example}

 \begin{example}
   \label{example:DAstacked-stretched-dKoszul}
Now  we  give an example where, as well as closed \atrails, there are \aloops.
Let $\Lambda = K\cQ/I$ where $\cQ$ is the quiver
\[\xymatrix{
\cdot \ar[r]^{\gamma_4} & \cdot \ar[r]^{\beta_1}\ar[d]^{\gamma_1} & \cdot \ar[r]^{\beta_2} & \cdot\ar@/^/[d]^{\alpha_1} \\
\cdot\ar[u]^{\gamma_3} & \cdot \ar[l]^{\gamma_2} & & \cdot\ar@/^/[u]^{\alpha_2} 
}\]
and the ideal $I$ has minimal generating set 
$\rho = \{(\alpha_1\alpha_2)^2,  (\gamma_1\gamma_2)(\gamma_3\gamma_4),  
(\gamma_3\gamma_4)(\gamma_1\gamma_2)\}$.
Then $\Lambda$ is a $(4,2)$-stacked monomial algebra. 

The closed $2$-trails are the paths of length $4$ whose arrows are the $\gamma _i$ and
the $2$-loops are $\alpha _1\alpha _2$ and $\alpha _2\alpha _1$.

 \end{example}

 \begin{example}
   \label{example:DAstacked-not-stretched}
Finally, we give an example in which an arrow, namely $\beta _2$, occurs both in
closed \atrails and in \aloops.
Let $\Lambda = K\cQ/I$ where $\cQ$ is the quiver
\[\xymatrix{
& \cdot \ar[r]^{\beta_3}\ar[dl]_{\alpha_2} & \cdot \ar[r]^{\beta_4} & \cdot \ar[r]^{\beta_5} & \cdot \ar[r]^{\beta_6} & \cdot \ar[dl]^{\beta_7} \\
\cdot\ar[r]_{\alpha_1} & \cdot \ar[u]_{\beta_2} &  \cdot \ar[l]^{\beta_1}  &  \cdot \ar[l]^{\beta_9}  &  \cdot \ar[l]^{\beta_8} &
}\]
and the ideal $I$ has minimal generating set 
\[\rho = \{(\alpha_1\beta_2\alpha_2)^2,  (\beta_1\beta_2\beta_3)(\beta_4\beta_5\beta_6), 
 (\beta_4\beta_5\beta_6)(\beta_7\beta_8\beta_9),  (\beta_7\beta_8\beta_9)(\beta_1\beta_2\beta_3)\}.\]
Then $\Lambda$ is a $(6,3)$-stacked monomial algebra. 

The closed $3$-trails are all the cycles of length $9$ whose arrows are the
$\beta _i$ and the $3$-loops are $\alpha _1\beta _2\alpha _2$, $\alpha _2\alpha
_1\beta _2$ and $\beta _2\alpha _2\alpha _1$.
 \end{example}

We have the following consequences of Property~\ref{pty:da-stacked-covering}.

\begin{csq} \label{csq:subpaths-d-a-covering}
We keep the notation of Property~\ref{pty:da-stacked-covering}, with $D=dA$. Then the length
of $u$ must be a multiple of $A$, so that $R_1^2u$ is an \apath, and every
\asubpath of \alength $d$ of $R_1^2u$ is in $\rho $. Moreover, no other subpath of length $D$ of
$R_1^2u$ is in $\rho $.
 \end{csq}

 \begin{proof} Write $\ell(u)=qA+r$ with $q\pgq 1$ and $0\ppq r<A$. We prove
   the result by induction on $q$.

If $q=1$, then the path $R_2^2\in\rho $ overlaps $R_3^2\in \rho $ with overlap $R_3^2u_3$
   for some path $u_3$. If this overlap is a proper overlap (that is, $R_3^2\neq
   R_2^2$), then $\ell(u)=\ell(u')+\ell(u_3)=A+\ell(u_3)$ so
   that $\ell(u_3)=(q-1)A+r=r$ and $0<r<A$. Therefore by Property
   \ref{pty:da-stacked-covering} we have a contradiction. It follows that
   $R_3^2=R_2^2$ and $u=u'$ has length $A$ and that $R_1^2$ and $R_3^2$ are the
   only \asubpaths of \alength $d$ of $R_1^2u$ and they are in $\rho
   $. Moreover, any other subpath of length $D$
   of $R_1^2u$ is a proper overlap of $R_1^2$ of length strictly smaller than $D+A$,
   which is impossible  by Property
   \ref{pty:da-stacked-covering}. 

Let $q>1$ be such that $\ell(u)=qA+r$ with $0\ppq r<A$ and assume that the
result is true for any proper overlap of a path in $\rho $ of length  $D+q'A+r'$ with $q'<q$ and
$0\ppq r'<A$. 
   The path $R_2^2\in\rho $ properly overlaps $R_3^2\in \rho $ with overlap $R_3^2u_3$
   for some path $u_3$ with $\ell(u)=\ell(u')+\ell(u_3)=A+\ell(u_3)$ so
   that $\ell(u_3)=(q-1)A+r$ and the overlap $R_3^2u_3$ has length $D+(q-1)A+r$. By induction, $\ell(u_3)$ is a multiple of $A$,
   therefore $r=0$ and $\ell(u)$ is a multiple of $A$. Any \asubpath of
   \alength $d$ of $R_1^2u$ is either $R_1^2$ or an \asubpath of \alength $d$ of
   $R_3^2u_3$. Again by induction, they are all in $\rho $. Finally, a subpath
   of length $D$ of $R_1^2u$ which is not an \asubpath  either is a subpath of
   length $D$ of $R_3^2u_3$ that is not an \asubpath, therefore not in $\rho $
   by induction, or  properly overlaps $R_1^2$ with overlap $R_1^2u'''$ with
   $0<\ell(u''')<A$, which is impossible by Property
   \ref{pty:da-stacked-covering}. 
 \end{proof}

\begin{csq}\label{csq:da-covering}
Suppose that $D=dA$. Let $n\pgq 2$ and let $R_i^n$ be an element of $\R^n$. Write $R_i^n=\alpha _1\cdots \alpha _{\delta  (n)}$ where each $\alpha _i$ is a path of length $A$.  
Then for all $i$ with $1\ppq i\ppq \delta (n)-d+1$, the path $\alpha _i\cdots \alpha _{i+d-1}$ is in $\rho $, that is, all the \asubpaths of \alength $d$ of $R_i^n$ are in $\rho $. Moreover, no other subpath of $R_i^n$ of length $D$ is in $\rho $. 
\end{csq}

\begin{proof}

The result is proved by induction.
It is clear when $n=2$. Moreover, if $n=3$,
since $R_i^3\in\R^3$ is a maximal overlap of two elements in $\R^2$, it follows
from Property \ref{pty:da-stacked-covering} and using the notation therein that
$R_i^3=R_1^2u'=v'R_3^2$ where $v'$ is the prefix of $R_1^2$ of length $A$. By
Consequence \ref{csq:subpaths-d-a-covering}, the only subpaths of length $D$ of
$R_i^3$ that are in $\rho =\R^2$ are $R_1^2$ and $R_3^2$.

Now let $n\pgq 4$ and take $R_i^n\in \R^n$. Then $R_i^n$ is a
maximal overlap of $R_1^2\in\R^2$ with $R_2^{n-1}\in \R^{n-1}$ so that
$R_i^n=R_2^{n-1}u$ for some path $u$. Write
$R_2^{n-1}=\alpha _1\cdots \alpha _{\delta (n-1)}$ with $\ell(\alpha _i)=A$ for
all $i$. By the   induction assumption, we have $\alpha _i\cdots \alpha _{i+d-1}\in
\R^2$ for all $i$ with $i+d-1\ppq \delta (n-1)$.  In particular, $R_3^2:=\alpha _{\delta (n-1)-d+1}\cdots \alpha
_{\delta (n-1)}$ is in $\R^2$. Since $R_1^2$ overlaps $R_3^2$ with overlap
$R_3^2u$, by Property ~\ref{pty:da-stacked-covering} we have
$\ell(u)=A$ and $\alpha _{\delta (n-1)-d+2}\cdots \alpha _{\delta(n-1)}u=R_1^2\in\R^2$. Since $R_i^n=R_2^{n-1}u$, we have proved the first part of the result for
$R_i^n$.

Now let $p$ be another subpath of $R_i^n$ of length $D$. We already know by
induction that if $p$ is a subpath of $R_2^{n-1}$, then $p$ is not in
$\R^2$. Therefore $p$ is a subpath of $R_3^2u$ which is neither
$R_3^2$ nor $R_1^2$. 
By Consequence \ref{csq:subpaths-d-a-covering}, $p$ is not in $\rho $. 
We have proved that $p\not\in
\R^2$ and the induction step is complete.  
\end{proof}

\begin{csq}\label{csq:atrail}
Suppose that $D=dA$. Let $T = \alpha_1 \cdots \alpha_n$ be a closed \atrail in ${\cQ}$ with $\alpha _i\in\cQ_A$ for all $i$ and suppose that $d \pgq n+1$. Assume also that $T$ is the prefix of an \apath in $\rho $ and the suffix of an \apath in $\rho $.
Then all \asubpaths of \alength $d$ of powers of  the closed trail $T$ are in $\rho$.
\end{csq}

\begin{proof} By assumption, there exist \apaths $T'$ and $T''$ such that $T'T\in \rho $ and $TT''\in \rho $. 
Since $\Lambda$ is finite dimensional, there is a path $R_2 \in \rho$ that lies
on $T$, and $\ell(R_2)=D=dA>\ell(T)=nA$. Therefore $R_2$ is a subpath of length
$D$ of $T^N=(\alpha _1\cdots \alpha _n)^N$ for some $N\pgq 2$. If $R_2=T^m$ is a
power of $T$ with $m\pgq 2$ (and $d=nm$) then $R_2$ overlaps itself with overlap
$T^{2m-1}$ and the result follows using Consequence
\ref{csq:subpaths-d-a-covering} (every \asubpath of \alength $d$ of a power of
$T$ is an \asubpath of $T^{2m-1}$). Otherwise, $TT''$ overlaps $R_2$ or $R_2$
overlaps $T'T$ and we can use Consequence \ref{csq:subpaths-d-a-covering} again
to prove that $R_2$ is an \asubpath of  $T^N$ and then that every \asubpath of
\alength $d$ of the overlap is in $\rho $; since every \asubpath of \alength $d$
of a power of $T$ is one of these, we obtain the result.
\end{proof}

\subsection{Characterisations of $(D,A)$-stacked monomial algebras that satisfy \textbf{(Fg)}}

We now give our combinatorial condition 
for $(D,A)$-stacked monomial algebras $\Lambda$.

\begin{cond}\label{cond:comb-da}  We say that a  $(D,A)$-stacked monomial algebra $\Lambda $ satisfies Condition \ref{cond:comb-da}, or \ref{cond:J-da-stacked},  when the following properties (1) and (2) both hold:
\begin{enumerate}[(1)]
\item Let $c$ be an \aloop in $\cQ _A$. Write $c=a_1\cdots a_A$ with $a_i\in \cQ _1$ for all $i$ and $c_j=a_j\cdots a_Aa_1\cdots a_{j-1}$ for $j\in\set{1,\ldots,A}$. Then there exists $j$ such that $c_j^d\in\rho $ but there is no path in $\rho $ of the form $c_j^{d-1}\beta $ or $\beta c_j^{d-1}$ where $\beta $ is a path of length $A$ that is distinct from $c_j$.
\item Let $T=\alpha _1\cdots\alpha _n$ be a closed $A$-trail in $\cQ $ with $n\pgq 2$ and $\alpha _i\in \cQ _A$ for all $i$ and such that $\rho _T:=\set{\alpha _1\cdots\alpha _d,\alpha _2\cdots\alpha _d\alpha _{d+1},\ldots ,\alpha _n\alpha _1\cdots\alpha _{d-1}}\subseteq\rho $. Then there are no elements in $\rho \setminus\rho _T$ which begin or end with the path $\alpha _i$, for all $i$.
\end{enumerate}
\end{cond}

\begin{remark}
  In part (1) of the condition, there is exactly one $j$ such that $c_j^d\in \rho $. Indeed, if $c_j^d$ and $c_k^d$ were in $\rho $, they would overlap with an overlap of length at most $D+A-1$, hence by Property \ref{pty:da-stacked-covering} we must have $c_j^d=c_k^d$ and therefore $j=k$.   
\end{remark}

\begin{remark}
  If $A=1$ then Condition~\ref{cond:comb-da} is equivalent to Condition~\ref{cond:comb-d-koszul}.
\end{remark}

We first prove that this condition is sufficient for $\Lambda $ to satisfy \textbf{(Fg)}.

\begin{thm}\label{tm:da-stacked-sufficient}
Let $\Lambda = K\cQ/I$ be a finite dimensional $(D,A)$-stacked
monomial algebra. Assume that $\Lambda$ satisfies
Condition~\ref{cond:comb-da}. Then $\Lambda$ satisfies \textbf{(Fg)}.
\end{thm}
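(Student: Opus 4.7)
The plan is to adapt the proof of Theorem~\ref{tm:J-sufficient} to the $(D,A)$-stacked setting, replacing arrows by \apaths throughout. Since Condition~\textbf{(Fg)} automatically holds when $\gldim\Lambda$ is finite, I may assume $\gldim\Lambda\pgq 4$ and thus apply \cite[Theorem 3.4]{GS-colloq math}. This yields an isomorphism
\[\HH^*(\Lambda)/\mathcal{N}\cong K[x_1,\dots,x_r]/\langle x_ax_b \mid a\neq b\rangle,\]
where the $x_i$ correspond either to \aloops $c_j$ with $c_j^d\in\rho$ (contributing a generator in degree $2$ acting on $E(\Lambda)$ as left multiplication by $\ov{c_j^d}$) or to closed \atrails $T_i=\alpha_{i,1}\cdots\alpha_{i,m_i}$ with $\rho_{T_i}\subseteq\rho$ (contributing a generator in degree $2\mu_i$ with $\mu_i=m_i/\gcd(d,m_i)$, acting as left multiplication by $\sum_{k=1}^{m_i}T_{i,k}^{d/\gcd(d,m_i)}$). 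Setting $H$ to be the subring of $\HH^*(\Lambda)$ generated by $Z(\Lambda)$ and $\{x_1,\dots,x_r\}$, graded commutativity and finiteness of $Z(\Lambda)=\HH^0(\Lambda)$ give that $H$ is a commutative Noetherian ring.

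Next I would show that $\bigcup_{n=0}^N\R^n$ generates $E(\Lambda)$ as a left $H$-module, for $N=\max\{3,\abs{x_1},\dots,\abs{x_r},\abs{\cQ_A}\}$. Given $R\in\R^n$ with $n>N$, by Consequence~\ref{csq:da-covering} I may write $R=\alpha_1\cdots\alpha_{\delta(n)}$ as an \apath whose \asubpaths of \alength $d$ all lie in $\rho$; moreover, no other subpath of length $D$ is in $\rho$. Since $n>\abs{\cQ_A}$, some $\alpha_j$ repeats; choose $j,k$ with $k$ minimal such that $\alpha_j,\dots,\alpha_{j+k-1}$ are distinct and $\alpha_{j+k}=\alpha_j$. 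I split into the same two cases as in the proof of Theorem~\ref{tm:J-sufficient}.

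In Case~(1), $k=1$, so $\alpha_j$ is an \aloop. Condition~\ref{cond:comb-da}~(1) implies there is a unique cyclic rotation $c_j$ of $\alpha_j$ with $c_j^d\in\rho$ and no overlaps of $c_j^d$ with other elements of $\rho$; Consequence~\ref{csq:da-covering} then forces $R$ to be a power of $c_j$ (possibly times a prefix of length $<\ell(c_j)$), so $R=x_i^m\cdot r$ for some $r\in\bigcup_{n\ppq N}\R^n$. In Case~(2), I first use Consequence~\ref{csq:atrail} (when $d\pgq k+1$) and, when $d\ppq k$, a chain of maximal overlaps via Property~\ref{pty:da-stacked-covering} applied at the \apath level to deduce $\rho_T\subseteq\rho$, where $T=\alpha_j\cdots\alpha_{j+k-1}$. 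Then Condition~\ref{cond:comb-da}~(2) forbids any element of $\rho\setminus\rho_T$ to begin or end with one of the $\alpha_{j+i}$; iteratively extending both ends of the repeated block, I would conclude $R=p_1T^qp_2$ with $p_1$ an \asuffix and $p_2$ an \aprefix of $T$. Writing $q=(d/\gcd(d,k))c+w$ and invoking minimality of $k$ to kill the cross-terms $T_{i,l}^{d/\gcd(d,k)}(T^wp_2)=0$ for $l\neq 1$, I obtain $R=x_i^c\cdot r$ with $r\in\R^m$ for some $m\ppq N$.

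The main obstacle will be the combinatorial tracking at the \apath (rather than arrow) level: the arguments in the $d$-Koszul case freely manipulated arrows, but here every decomposition must respect the blocking into \apaths of length $A$. The fact that Consequence~\ref{csq:subpaths-d-a-covering} ensures that every subpath of length $D$ of a maximal overlap $R_1^2u$ is actually an \asubpath is the crucial tool that legitimises this lifting; the delicate point will be proving $\rho_T\subseteq\rho$ when $d\ppq k$, which in the arrow case relied on reading off maximal overlaps of length $d+1$, and whose analogue here requires reading off maximal \apath overlaps of \alength $d+1$ and invoking Property~\ref{pty:da-stacked-covering} at each step.
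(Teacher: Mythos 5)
Your proposal is correct and follows essentially the same route as the paper: reduce to $\gldim\Lambda\pgq 4$, invoke \cite[Theorem 3.4]{GS-colloq math} to build the commutative Noetherian subalgebra $H$ from the \aloops and closed \atrails singled out by Condition~\ref{cond:comb-da}, and then rerun the generation argument of Theorem~\ref{tm:J-sufficient} at the level of \apaths, with Consequences~\ref{csq:subpaths-d-a-covering}, \ref{csq:da-covering} and \ref{csq:atrail} replacing the $d$-covering property. The only small point the paper adds that you omit is the verification that each \aloop $c_i$ with $c_i^d\in\rho$ is not a proper power of a shorter path, which is needed to match the hypotheses of the cited theorem.
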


\begin{proof}
The case $D\pgq 2$ and $A=1$ corresponds to $d$-Koszul monomial algebras (with
$D=d$) and is proved in Theorem~\ref{tm:J-sufficient}. Therefore we assume that $A > 1$ so that necessarily $D > 2$. 
If  $\gldim\Lambda$ is finite then $\Lambda$ satisfies \textbf{(Fg)} (and Condition~\ref{cond:comb-da} is empty), so we also assume that  $\gldim\Lambda \pgq 4$ so that $D=dA$. 

The structure of this proof follows that of Theorem~\ref{tm:J-sufficient} by replacing each arrow in $\cQ_1$ by a path of length $A$ in $\cQ_A$. We do not give all the details here, but indicate those places where we need to provide additional arguments.

\medskip

The first part of the proof is to show that the hypotheses of \cite[Theorem 3.4]{GS-colloq math} hold.

Let $c_1, \dots , c_u$ be the $A$-loops in $\cQ$ such that $c_i^d \in \rho$ for $i=1, \dots , u$.
(We remark that, in the terminology of \cite{GS-colloq math}, these are precisely the closed paths in $\cQ$ such that for each $c_i$ we have $c_i \neq p_i^{r_i}$ for any path $p_i$ with $r_i \pgq 2$ and $c_i^d \in \rho$.
Firstly, $c_i^d \in \rho$ implies that $\ell(c_i) = A$. Then, if $c_i = p_i^{r_i}$ for some path $p_i$ with $r_i \pgq 2$, we have $1 \ppq \ell(p_i) < A$.
Now $p_i^{dr_i}$ is in $\rho$ and $p_i^{dr_i}$ overlaps itself with overlap $p_i^{dr_i+1}$, so there is a maximal overlap in $\R^3$ of length $\ppq D+\ell(p_i)<D+A$.
But this is a contradiction since $\Lambda$ is a $(D,A)$-stacked monomial algebra. So $c_i \neq p_i^{r_i}$.)
By Condition~\ref{cond:comb-da}~(1), for each $i=1, \dots, u$, there are no elements in $\rho$ of the form $c_i^{d-1}\beta$ or $\beta c_i^{d-1}$ where $\beta$ is a path of length $A$ that is distinct from $c_i$.

We need to show that there are no overlaps of $c_i^d$ with any element of $\rho\setminus\{c_i^d\}$.
If $R \in \rho\setminus\{c_i^d\}$ and $R$ overlaps $c_i^d$, then, by  Consequence~\ref{csq:subpaths-d-a-covering}, either $R = c_i^sb$ or $R = bc_i^s$ where
$1 \ppq s \ppq d-1$ and $b$ is an \apath with $\ell_A(b) = d-s$ and that does not begin (respectively, end) with the path $c_i$.
Suppose that $R = c_i^sb$. Then $R$ overlaps $c_i^d$ with overlap of length $A(2d-s)$.
By Consequence~\ref{csq:subpaths-d-a-covering}, this is a maximal overlap since $c_i$ is not a prefix of $b$ and thus gives an element $R^3_1 \in {\mathcal R}^3$. However, $\ell(R^3_1) = D+A = (d+1)A$. Thus $2d-s = d+1$ and so $s=d-1$.
But then $R = c_i^{d-1}b$ and $b$ is a path of length $A$ distinct from $c_i$, which is a contradiction. The case  $R = bc_i^s$ is similar.
So there are no overlaps of $c_i^d$ with any element of $\rho\setminus\{c_i^d\}$.

Let $T_{u+1}, \dots T_r$ be the distinct closed $A$-trails in $\cQ$ with $\ell_A(T_i) > 1$ such that the sets $\rho_{T_i}$ of Condition~\ref{cond:comb-da}~(2) are contained in $\rho$.
For each $i=u+1, \dots,r$,
we write $T_i=\alpha_{i,1}\cdots\alpha_{i,m_i}$, where the $\alpha_{i,j}$ are in $\cQ_A$ so that
$\ell_A(T_i) = m_i > 1$ and
\[\rho_{T_i}=\{\alpha_{i,1}\cdots\alpha_{i,d}, \alpha_{i,2}\cdots\alpha_{i,d+1},\dots,  \alpha_{i,m_i}\alpha_{i,1}\cdots\alpha_{i,d-1}\} \subseteq \rho.\]
By Condition~\ref{cond:comb-da}~(2), for each closed $A$-trail $T_i$ ($i=u+1, \dots,r$), there are no elements in $\rho\setminus\rho_{T_i}$ which begin or end with the path $\alpha_{i,j}$, for all $j = 1, \dots, m_i$. So no path $\alpha_{i,j}$ of length $A$ has overlaps with any element in $\rho\setminus\rho_{T_i}$.

\medskip

The next step is to describe a  commutative Noetherian graded subalgebra $H$ of $\HH^*(\Lambda )$ with $H^0=\HH^0(\Lambda )$. 
Applying \cite[Theorem 3.4]{GS-colloq math}, gives
$\HH^*(\Lambda)/\mathcal{N}\cong K[x_1,\dots,x_r]/\langle x_ax_b \ \mbox{for} \ a\neq b\rangle$, where
\begin{itemize}
\item for $i=1,\dots,u$, the vertices $v_1,\dots,v_u$ are distinct and the element $x_i$ corresponding to the $A$-loop $c_i$ is in degree $2$ and is represented by the map ${\mathcal P}^2\longrightarrow\Lambda$ where for $R^2\in\mathcal{R}^2$,
    \[\mo(R^2)\otimes\mt(R^2)\mapsto \begin{cases} v_i & \mbox{ if}\ R^2=c_i^d \\
    0 & \mbox{ otherwise}
    \end{cases}\]
\item and for $i=u+1,\dots,r$, the element $x_i$ corresponding to the closed $A$-trail $T_i=\alpha_{i,1}\cdots\alpha_{i,m_i}$ is in degree $2\mu_i$ such that $\mu_i=m_i/\gcd(d,m_i)$ and is represented by the map ${\mathcal P}^{2\mu_i}\longrightarrow\Lambda$, where for $R^{2\mu_i}\in{\mathcal{R}{^{2\mu_i}}}$,
    \[\mo(R^{2\mu_i})\otimes\mt(R^{2\mu_i})\mapsto
\begin{cases}\mo(T_{i,k}) & \mbox{if}\ R^{2\mu_i}=T^{d/\gcd(d,m_i)}_{i,k}
\mbox{ for all } k=1,\dots,m_i  \\
0 &\mbox{otherwise}.
\end{cases}\]
\end{itemize}
Let $H$ be the subring of $\HH^*(\Lambda)$ generated by $Z(\Lambda)$ and $\{x_1,\dots,x_r\}$.
As in Theorem~\ref{tm:J-sufficient}, $H$ is a commutative Noetherian ring.

\medskip

Now we show that $\Lambda$ satisfies \rm\textbf{(Fg)} with this algebra $H$.
Again, we identify $\bigcup_{n\pgq 0}\R^n$ with a basis of $E(\Lambda)$.
Set $N=\max\{3, |x_1|,\dots,|x_r|, |\mathcal{Q}_A|\}$.
We show that $\bigcup_{n=0}^N\R^n$ is a generating
set for $E(\Lambda)$ as a left $H$-module and thus $E(\Lambda)$ is finitely generated as a left $H$-module.

Let $R\in \R^n$ with $n>N$. Then $\ell_A(R) = \delta(n) \pgq 2d$
and we can write $R=a_1a_2\cdots a_{\delta(n)}$ where the $a_i$ are in $\cQ_A$.
The proof now follows that of Theorem~\ref{tm:J-sufficient} by replacing each arrow by a path of length $A$, and with extensive use of Consequences~\ref{csq:subpaths-d-a-covering}, \ref{csq:da-covering} and \ref{csq:atrail}, and Condition~\ref{cond:comb-da}. 
Thus we conclude that $\Lambda$ has \rm\textbf{(Fg)}.
\end{proof}

\begin{example}
  We return to Examples \ref{example:DAstacked-FS},
  \ref{example:DAstacked-stretched-dKoszul} and \ref{example:DAstacked-not-stretched}.
In all these examples, Condition \ref{cond:comb-da} is satisfied and therefore
\textbf{(Fg)} holds for $\Lambda $. 

For instance, in Example \ref{example:DAstacked-FS}, the only closed $2$-trails $T$
such that $\rho _T\subseteq\rho $ are $\alpha \beta \gamma \delta $ and $\gamma
\delta \alpha \beta $ and, in both cases, $\rho _T=\rho $. In Example
\ref{example:DAstacked-not-stretched}, the closed $3$-trails  $T$
such that $\rho _T\subseteq\rho $ are those that start with $\beta _1$, $\beta
_4$ and $\beta _7$, in all cases we have $\rho _T=\rho \setminus\set{(\alpha
  _1\beta _2\alpha _2)^2}$ and $(\alpha
  _1\beta _2\alpha _2)^2$ does not start or end with a $\beta _i$.

By \cite[Theorem 2.5]{EHSST}, it follows that $\Lambda $ is Gorenstein in each
case. Moreover, it was proved in \cite{FS} that the algebra in Example
\ref{example:DAstacked-FS} has injective dimension $2$.
\end{example}

Our aim is now to prove the following theorem, and in particular the converse of Theorem \ref{tm:da-stacked-sufficient}.

\begin{thm}\label{tm:characterisations-fg-da-stacked}
  Let $\Lambda $ be an indecomposable finite dimensional $(D,A)$-stacked
  monomial algebra. Suppose that $D\neq 2A$ whenever $A>1$. Consider the following statements:
  \begin{enumerate}
  \item[\ref{cond:fg}] $\Lambda $ satisfies \textbf{(Fg)};
  \item[\ref{cond:J-da-stacked}] Condition~\ref{cond:comb-da} holds for $\Lambda $;
  \item[\ref{cond:Zgr}] $Z_{\text{gr}}(E(\Lambda ))$ is Noetherian and $E(\Lambda)$ is a finitely generated  $Z_{\text{gr}}(E(\Lambda ))$-module;
  \item[\ref{cond:Zgr-light}]  $E(\Lambda )$ is finitely generated as a module over $Z_{\text{gr}}(E(\Lambda ))$.
 \end{enumerate}  Then \ref{cond:Zgr-light} implies \ref{cond:J-da-stacked} which in turn implies \ref{cond:fg}. 

Moreover, if the field $K$ is algebraically closed, then the four statements are equivalent. 
\end{thm}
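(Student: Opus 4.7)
The implication \ref{cond:J-da-stacked}$\Rightarrow$\ref{cond:fg} is Theorem~\ref{tm:da-stacked-sufficient} and \ref{cond:Zgr}$\Rightarrow$\ref{cond:Zgr-light} is immediate, while \ref{cond:fg}$\Rightarrow$\ref{cond:Zgr} over an algebraically closed field is the result of Erdmann and Solberg in \cite{ES}. The substance of the theorem is therefore the implication \ref{cond:Zgr-light}$\Rightarrow$\ref{cond:J-da-stacked}, and the plan is to obtain it by transplanting the proof of Theorem~\ref{tm:characterisations-fg-d-koszul} to the $(D,A)$-stacked setting, with \apaths of \alength $1$ playing the role that arrows played there.

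First I would import from the appendix the description of $E(\Lambda)$ as a graded subspace $B$ of the analogue $\lkd\Lambda$ of the Koszul dual, together with the length grading $\ell$, the cohomological grading $\abs{\cdot}$ determined by $\abs{x}=k \Longleftrightarrow \ell(x)=\delta_A(k)$, and a multi-grading $\mg\colon B\to\nn^{\cQ_A}$ counting the occurrences of each length-$A$ path in a reversed monomial. As in Subsection~\ref{subsec:cond-equiv-d-Koszul}, the graded centre $Z=Z_{\text{gr}}(B)$ is then generated by elements that are simultaneously homogeneous for $\abs{\cdot}$ and $\mg$ and that are linear combinations of non-trivial cycles in $\cQ^{\op}$, and centrality only needs to be tested against the reversed \apaths of \alength $1$ (the elements of $B_1$) and the reversed length-$D$ paths coming from $\rho$ (the elements of $B_2$).

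Next I would prove \apath versions of the three preliminary results of Subsection~\ref{subsec:cond-equiv-d-Koszul}. The \aloop analogue of Lemma~\ref{lm:dKoszul-power-loop-Z} should state that for an \aloop $c$ with cyclic rotations $c_1,\dots,c_A$, the element $\ov{c_j}^{\,\delta(k)}$ lies in $Z$ (for admissible $k$) if and only if $c_j$ satisfies Condition~\ref{cond:comb-da}(1); this will follow from the same computation as Lemma~\ref{lm:dKoszul-power-loop-Z}, now using Consequence~\ref{csq:subpaths-d-a-covering} to control which length-$D$ subpaths of $c_j^{\,\delta(k)+1}\beta$ or $\beta c_j^{\,\delta(k)+1}$ can belong to $\rho$. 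The \atrail analogue of Proposition~\ref{cj-d-trails} should assert that under minimality assumptions (i) and (ii) on a closed \atrail $T$, any \apath of \alength $d$ assembled from the constituent $A$-paths of $T$ but not lying on $T$ fails to be in $\rho$; the proof will run as before, this time invoking Consequences~\ref{csq:subpaths-d-a-covering}, \ref{csq:da-covering} and \ref{csq:atrail} to extract a strictly shorter \asubcycle of $T$ satisfying the same hypotheses, contradicting minimality. The \atrail analogue of Lemma~\ref{lm:dKoszul-power-closed-trail-Z} should then identify, among linear combinations of non-trivial cycles in $\cQ^{\op}$ lying on $\ov T$ that are homogeneous for $\abs{\cdot}$ and $\mg$, the central ones as scalar multiples of $z_j=\sum_{i=1}^n\ov T_i^{\,j}$, precisely when $T$ satisfies Condition~\ref{cond:comb-da}(2).

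With these preliminaries in place, the final contradiction will be extracted exactly as in Theorem~\ref{tm:characterisations-fg-d-koszul}. Assuming $B$ is generated as a left $Z$-module by elements $\ov g_1,\dots,\ov g_t$ taken homogeneous for $\abs{\cdot}$ and $\mg$ while Condition~\ref{cond:comb-da}(1) fails at some rotation $c_j$, multi-degree considerations will force the $Z$-coefficients in a decomposition of $\ov{c_j}^{\,\delta(k)}$ to lie in $Z^0=K$ for every even $k\pgq 2$, placing the infinite linearly independent family $\set{\ov{c_j}^{\,\delta(k)}\mid k\pgq 2,\ k \text{ even}}$ inside a finite-dimensional $K$-span, a contradiction. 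If instead Condition~\ref{cond:comb-da}(2) fails, I would choose a closed \atrail $T$ of shortest \alength that violates it (so that, since Condition~\ref{cond:comb-da}(1) may be assumed, assumptions (i) and (ii) hold) and run the same argument applied to $\ov{\alpha}_{j+d-1}\cdots\ov{\alpha}_j\ov T_j^{\,\delta(k)}$ for $1\ppq j\ppq n$ and $k\pgq 2$ even. The hard part will be the odd-$\abs{z}$ case of the \atrail lemma: its proof splits on whether $d\pgq 3$ or $d=2$, and the transplanted argument for $d=2$ is exactly the one that requires excluding $D=2A$ when $A>1$, which is why that hypothesis appears in the statement of the theorem.
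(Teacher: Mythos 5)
Your overall strategy coincides with the paper's: the only content beyond Theorem~\ref{tm:da-stacked-sufficient} and \cite{ES} is \ref{cond:Zgr-light}$\Rightarrow$\ref{cond:J-da-stacked}, and the paper obtains it exactly as you propose, by proving \apath analogues of Lemma~\ref{lm:dKoszul-power-loop-Z}, Proposition~\ref{cj-d-trails} and Lemma~\ref{lm:dKoszul-power-closed-trail-Z} (these are Lemma~\ref{lm:da-stacked-power-aloop-Z}, Proposition~\ref{cj-da-trails} and Lemma~\ref{lm:d-a-stacked-power-closed-trail-Z}) and then rerunning the final contradiction verbatim with arrows replaced by paths in $\cQ_A$.

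There is, however, one concrete error in your setup. You assert that centrality in $B$ need only be tested against $B_1$ and $B_2$. That is the correct statement for $d$-Koszul algebras, where $E(\Lambda)$ is generated in degrees $0$, $1$, $2$; but for $A>1$ the algebra $B\cong E(\Lambda)$ is generated in degrees $0$, $1$, $2$ \emph{and} $3$, and every product of a degree-$1$ element with an element of positive degree vanishes. Hence commutation with $B_1$ is vacuous and gives no information, and centrality must instead be tested against $B_2$ \emph{and} $B_3$ (paths of length $d$ and $d+1$ in $\Gamma$); this is Remark~\ref{rk:Z-da-stacked-degrees}. The omission is not cosmetic: in the even-$\abs{z}$ case of the closed-trail lemma, commutation with $B_2$ only yields $\lambda_k=\lambda_{k+d}$, which does not force all coefficients equal when $\gcd(d,n)>1$; one needs commutation with $B_3$ to obtain in addition $\lambda_k=\lambda_{k+d+1}$ and hence $\lambda_k=\lambda_{k+1}$. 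Without this, you cannot conclude that the only candidate central elements supported on $\ov T$ are scalar multiples of $z_j$, and the final ``infinite family inside a finite span'' contradiction does not go through. Relatedly, your attribution of the hypothesis $D\neq 2A$ to the $d=2$ branch of the trail lemma is misplaced: that hypothesis forces $d\pgq 3$ whenever $A>1$, and its actual role is to guarantee the isomorphism $E(\Lambda)\cong B$ in the appendix (via the vanishing of Yoneda products with degree-$1$ elements from \cite{LS}); for $A>1$ the $d=2$ branch simply never arises. A small bonus of the vanishing of products with $B_1$ is that the parity restrictions of the $d$-Koszul argument can be dropped: one need not take $k$ even in the final step.
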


We shall need, as in the $d$-Koszul case, a description of the Ext algebra of
$\Lambda $. We give the details of this in the appendix, and we briefly describe
it here. Since we have already proved
Theorem~\ref{tm:characterisations-fg-da-stacked} when $\Lambda $ is $d$-Koszul,
we assume here that $D>A>1$ and, in addition, that $D\neq 2A$.

Let $\Gamma $ be the quiver with the same vertices as $\cQ $ and whose set of arrows
corresponds to the set of paths of length $A$ in $\cQ$, that is, $\Gamma
_1=\set{\ov\alpha \colon i\rightarrow j\mid \text{there exists } \alpha \in \cQ _A, \ \alpha \colon j\rightarrow i}$. Let $\lo \rho $ be the orthogonal of $\rho $ for the bilinear form $K\Gamma _d\times K(\cQ _A)_d\rightarrow K $ defined on paths of length $d$ in $\Gamma $ and \apaths of \alength $d $ in $\cQ $ by $\rep{\ov\alpha _d\cdots\ov\alpha _1,\beta _1\cdots \beta _d}=1$ if $\alpha _1\cdots\alpha _d=\beta _1\cdots\beta _d$ and $0$ otherwise, where the $\alpha _i$ and $\beta _i$ are in $\cQ _A$. Set $\lkd\Lambda =K\Gamma /J$ where $J=(\lo\rho )$; it is a monomial algebra and the ideal $J$ has a minimal generating set $\sigma $ given by all the paths $\ov \alpha _d\cdots\ov\alpha _1$ such that the \apath $\alpha _1\cdots\alpha _d$ is not in $\rho $.

Let $B=\bigoplus _{n\pgq 0}B_n$ be the algebra defined as follows:
\begin{itemize}
\item  $B_n=\lkd\Lambda_{\delta  (n)} $
\item for $x\in B_n$ and $y\in B_m$, define $x\cdot y\in B_{m+n}$ by
\[ x\cdot y=
\begin{cases}
0&\text{ if $n$ and $m$ are odd};\\
0&\text{ if $n$ or $m$ is equal to $1$ and $n\pgq 1$, $m\pgq 1$};\\
xy&\text{ in $\lkd\Lambda $ otherwise}. 
\end{cases}
 \] 
\end{itemize}
Observe that if $n$ or $m$ is even and both are larger than $1$, $\delta (n)+\delta  (m)=\delta  (n+m)$, so that the algebra $B$ is a graded $K$-algebra, generated in degrees $0$, $1$, $2$ and $3$. Note that this is also true of $E(\Lambda )$ by \cite{GS-J Alg}.  Moreover, we prove in the appendix that the algebras $E(\Lambda )$ and $B$ are isomorphic, generalising the description given in \cite{GMMVZ} when $\Lambda $ is a  $d$-Koszul  algebra. 
{This isomorphism uses the assumption that $D \neq 2A$.}

There is a basis $\B_{\lkd\Lambda }$ of $\lkd\Lambda $ consisting of all paths $p$ in $\Gamma $ such that no path in $\sigma $ is a subpath of $p$, and basis $\B_B$ of $B$ contained in $\B_{\lkd\Lambda }$ consisting of all $\ov R_i^m$ for all $m\pgq0$ and all $R_i^m\in\R^m$.

We now define several gradings, on $\lkd \Lambda $ and on $B$.

There is a natural grading on $\lkd\Lambda $  given by the length $\ell$ of paths. Note that if $p$ is an \apath in $\cQ $, then $\ell(\ov p)=\ell_A(p)$. The degree of a homogeneous element $x$ in $B$ will be denoted by $\abs{x}$, so  $x\in \lkd\Lambda _{\delta  (\abs{x})}$ or, in other terms, $\abs x=k$ if, and only if, $\ell(x)=\delta (k)$.

The algebra $\lkd\Lambda $ is also multi-graded by $\nn^{\cQ _1}$: for each path $\ov p$ in $\Gamma $, we define an element $\mg(\ov p)=\left(\mg_\alpha (\ov p)\right)_{\alpha \in \cQ _A}\in \nn^{\cQ _1}$ as follows: write the \apath $p$ in $\cQ $ as $p=\alpha _1\cdots \alpha _n$ where each $\alpha _i$ is in $\cQ _A$; 
\begin{itemize}
\item if $\ell(\ov p)=0$, then $\mg(\ov p)=(0)_{\alpha \in \cQ _1}$
\item if $\ell(\ov p)>0$, then $\mg_\alpha (\ov p)$ is the number of $\alpha _i$ that are equal to $\alpha $  (it is $0$ if none of the $\alpha _i$ are equal to $\alpha $).
\end{itemize}  Note that even if $\alpha $ is a subpath of $p$, we can have $\mg_\alpha (\ov p)=0$ (if $\alpha $ is not one of the $\alpha _i$, that is, $p=q\alpha r$ where $q$ and $r$ are paths in $\cQ $ whose lengths are not multiples of $A$). 

Since $\lkd\Lambda $ is monomial, the ideal $J$ is homogeneous with respect to this multi-degree and therefore $\lkd\Lambda  $ is multi-graded.  In $B$, if $x$ and $y$ are homogeneous and $\abs x$ or $\abs y$ is even with both degrees at least $2$, then $\mg_\alpha (xy)=\mg_\alpha (x)+\mg_\alpha (y)$ but  $\mg_\alpha (xy)=0$ otherwise. 

Let $Z:=Z_{\text{gr}}(B)$ be the graded centre of $B$. As in the $d$-Koszul case, it   is generated by elements $z$ that are  homogeneous with respect to the grading $\abs \cdot$ and the multi-degree and such that, for any element $y\in B$ that is homogeneous with respect to the  grading $\abs\cdot$, we have $zy=(-1)^{\abs y\abs z}yz$.

\begin{remark}
\label{rk:Z-da-stacked-degrees}
  Recall that $B\cong E(\Lambda )$ is generated in degrees $0$, $1$, $2$ and $3$
  and that the product of an element of degree $1$ with any other element
  vanishes. Therefore when checking that an element is in $Z$, we need to check
  that it is a linear combination of cycles and  that it commutes or anti-commutes with paths of degrees $2$ and $3$, that is,  (non-zero) paths of length $d$ and of length $d+1$ in $\Gamma $.
\end{remark}

\medskip

The proof of Theorem~\ref{tm:characterisations-fg-da-stacked} relies on some preliminary results, namely Lemma~\ref{lm:da-stacked-power-aloop-Z}, Proposition~\ref{cj-da-trails} and Lemma~\ref{lm:d-a-stacked-power-closed-trail-Z}. We start with some comments on  $A$-loops in 
$\cQ _A$.   Let $c $ be an \aloop in $\cQ _A$. Since $\Lambda $ is finite dimensional, there
  exists an integer $N$ such that $c^N=0$ in $\Lambda $ and therefore there is
  some $j$ such that $c_j^d\in \rho $. To simplify notation and without loss of
  generality, write $c=c_j$. Then $c ^d\in\rho $, therefore $\ov c^ d\not\in\sigma $ and it follows that $\ov c ^k\neq 0$ in $\lkd\Lambda $ for all $k\pgq 0$ and that $\ov c ^{\delta (k)}\neq 0$ in $B$ for all $k\pgq 0$.

\begin{lem}\label{lm:da-stacked-power-aloop-Z} Let $c$ be an \aloop in $\cQ _A$ and let $n\pgq 2$ be an integer. Then 
  $\ov c^{\delta (n)}\in Z$ if, and only if,  $c $ satisfies  Condition~\ref{cond:comb-da}~(1).
\end{lem}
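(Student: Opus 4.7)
My plan is to adapt the proof of Lemma \ref{lm:dKoszul-power-loop-Z} to the $(D,A)$-stacked setting, accommodating three key differences: the loop is replaced by the \aloop $c \in \cQ_A$, the algebra $B$ is generated in degrees $0$, $1$, $2$, $3$ rather than just $0$, $1$, $2$, and products involving a degree-$1$ element vanish in $B$. The first step is to note, as in the preamble, that $\ov c^d\notin\sigma $ so $\ov c^{\delta (n)}\neq 0$ in $\lkd\Lambda $ and in $B$. By Remark \ref{rk:Z-da-stacked-degrees}, it then suffices to check (anti-)commutation with the generators of $B$ in degrees $2$ and $3$: these are the classes $\ov p$ with $p\in\rho $ (for $B_2$) and the classes $\ov R$ with $R\in\R^3$ (for $B_3$), viewed as paths of lengths $d$ and $d+1$ in $\Gamma $.

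For the forward direction, assuming $\ov c^{\delta (n)}\in Z$, I will take any \apath $\beta \in\cQ_A$ with $\beta \neq c$ and endpoints matching that of $c$. The element $\ov c^{d-1}\ov\beta $, if non-zero in $\lkd\Lambda $, lies in $B_2$, and graded centrality (computed in $\lkd\Lambda $ since $n\pgq 2$ and both degrees are at least $2$ with one even) yields the equality $\ov c^{\delta (n)+d-1}\ov\beta =\ov c^{d-1}\ov\beta \,\ov c^{\delta (n)}$ in the monomial algebra $\lkd\Lambda $. Since these are distinct monomial paths in $\Gamma $, both must vanish; as $\ov c^d\notin\sigma $, the only length-$d$ subpath of the left-hand side that can lie in $\sigma $ is $\ov c^{d-1}\ov \beta $, giving $\beta c^{d-1}\notin\rho $. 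The symmetric argument with $\ov\beta\,\ov c^{d-1}$ gives $c^{d-1}\beta \notin\rho $, establishing Condition~\ref{cond:comb-da}(1).

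For the converse, assuming Condition~\ref{cond:comb-da}(1), commutation with $B_1$ is automatic because degree-$1$ products vanish in $B$. For $B_2$, given $\ov p$ with $p=\beta _1\cdots \beta _d\in\rho $, I will split on whether $p=c^d$ (trivial) or not; in the latter case, letting $k$ be the largest index with $\beta _k\neq c$, the junction subpath of length $d$ in $\ov c^{\delta (n)}\ov p$ placed so that $\ov\beta _k$ is its last symbol corresponds to $\beta _k c^{d-1}$, which by Condition~\ref{cond:comb-da}(1) is not in $\rho $, so this subpath lies in $\sigma $ and forces $\ov c^{\delta (n)}\ov p=0$; the symmetric computation shows $\ov p\,\ov c^{\delta (n)}=0$. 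The small-index cases $k=1,2$ are ruled out because they would force $\beta _1c^{d-1}$ or $\beta _2c^{d-1}$ to lie in $\rho $, contradicting Condition~\ref{cond:comb-da}(1) applied to $p$ itself.

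The new technical step compared to the $d$-Koszul case is the $B_3$ check. When $n$ is odd it is automatic, since $\ov c^{\delta (n)}$ and $\ov R$ both have odd degree so their products in $B$ vanish. When $n$ is even, I plan a parallel case analysis on $R=a_1\cdots a_{d+1}\in\R^3$, using the overlap structure described in Consequences \ref{csq:subpaths-d-a-covering} and \ref{csq:da-covering}: either $R=c^{d+1}$ (in which case both products coincide with $\ov c^{\delta (n)+d+1}$), or some $a_i\neq c$, and then the junction subpath of length $d$ of $\ov c^{\delta (n)}\ov R$ containing the extremal such $a_i$ again corresponds to $a_kc^{d-1}$ and lies in $\sigma $ by Condition~\ref{cond:comb-da}(1). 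The small-index cases are excluded as before using the fact that both $R_1^2=a_2\cdots a_{d+1}$ and $R_2^2=a_1\cdots a_d$ lie in $\rho $ by construction. I anticipate this $B_3$ step to be the main obstacle, though elementary once the overlap structure is made explicit.
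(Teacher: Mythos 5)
Your proposal is correct and follows essentially the same route as the paper, whose own proof of this lemma is just the remark that one adapts Lemma~\ref{lm:dKoszul-power-loop-Z} using $A$-paths and Remark~\ref{rk:Z-da-stacked-degrees}; your explicit treatment of the $B_3$ check (automatic when $n$ is odd, and reduced via the junction subpath to $a_kc^{d-1},\,c^{d-1}a_j\notin\rho$ when $n$ is even) is exactly the intended adaptation. The only superfluous step is the exclusion of the ``small-index cases'': the length-$d$ junction subword $\ov c^{\,d-1}\ov\beta_k$ always sits inside $\ov c^{\,\delta(n)}\ov p$, since there are at least $\delta(n)\pgq d-1$ copies of $\ov c$ to its left, so no case needs to be ruled out separately.
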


\begin{proof}
The proof is very similar to that of Lemma \ref{lm:dKoszul-power-loop-Z}, using $A$-paths and Remark \ref{rk:Z-da-stacked-degrees}.
\end{proof}

We shall now consider part (2) of Condition~\ref{cond:comb-da}.

Let $T=\alpha _1\cdots \alpha _n$ be 
a closed $A$-trail in $\cQ $ with $\alpha _i\in \cQ _A$
for all $i$.  Assume that $n\pgq 2$ and
that $\rho _T=\set{\alpha _i\cdots \alpha _{i+d-1}\mid 1\ppq i\ppq
  n}\subseteq\rho $. Then $\ov T$ and all the paths lying on $\ov T$ are in $\B_{\lkd\Lambda }$ (none of their subpaths of length $d$ are in $\sigma $); those of length $\delta(k)$ for some $k\pgq0$ are in $\B_B$.

In a similar way to Section~\ref{subsec:cond-equiv-d-Koszul}, we make the following assumptions.
\begin{enumerate}[(i)]
\item none of the $\alpha _i$ are \aloops.
\item no \asubcycle of $T$ satisfies the same assumptions as $T$ (that is, there is no $A$-subcycle $q$ of $T$ of \alength at least $2$,  and $\rho _q\subseteq\rho $). 
\end{enumerate}

  \begin{prop}\label{cj-da-trails}
Let $T=\alpha _1\cdots \alpha _n$ be a closed $A$-trail with $n\pgq 2$, $\rho
_T\subseteq\rho $ and such  that assumptions~(i) and (ii)
hold.    Let $p$ be an \apath of \alength $d$ such that $\mg_\beta (\ov p)=0$ if
$\beta \in \cQ _A\setminus\set{\alpha _1,\ldots,\alpha _n}$ and which is not an
\asubpath of a power of $T$. Then $p\not\in\rho $.
  \end{prop}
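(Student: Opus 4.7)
My proof will closely mirror that of Proposition \ref{cj-d-trails}, working systematically with \apaths and replacing the roles of Property \ref{pty:d-covering} and Consequence \ref{csq:trail} by Consequences \ref{csq:subpaths-d-a-covering} and \ref{csq:atrail} respectively. Write $p=\gamma _1\cdots \gamma _d$ with each $\gamma _i\in \cQ _A$; the hypothesis on $\mg(\ov p)$ forces $\gamma _i\in\set{\alpha _1,\ldots,\alpha _n}$ for every $i$. If $T$ had no repeated vertex, then the outgoing $\alpha $-factor at each vertex of $T$ would be unique, forcing $p$ to be an \asubpath of a power of $T$, contrary to hypothesis; hence $T$ has a repeated vertex. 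I then claim that every proper \asubcycle $q$ of $T$ has \alength at least $d$: assumption~(i) excludes $\ell_A(q)=1$, and if $2\ppq\ell_A(q)<d$ then $q$ is both a prefix and a suffix of suitable elements of $\rho _T\subseteq\rho $ while its $\alpha $-factors are distinct (as factors of the \atrail $T$), so Consequence \ref{csq:atrail} yields $\rho _q\subseteq\rho $, contradicting assumption~(ii).

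Now suppose for contradiction that $p\in\rho $. Set $\gamma _1=\alpha _j$ and let $r\pgq 1$ be the largest integer with $\gamma _i=\alpha _{j+i-1}$ for $1\ppq i\ppq r$ (indices modulo $n$). Then $r<d$ and $\gamma _{r+1}=\alpha _t$ for some $t\not\equiv j+r\pmod n$, so that $\mo(\alpha _t)=\mo(\alpha _{j+r})$ is a repeated vertex. Set $S=\alpha _t\alpha _{t+1}\cdots\alpha _{j+r-1}$: this is a proper \asubcycle of $T$, and assumption~(i) rules out $\ell_A(S)=1$, so the previous step gives $\ell_A(S)\pgq d$. The goal is now to show $\rho _S\subseteq\rho $, which contradicts assumption~(ii). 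The \apaths $\alpha _{j+r-d}\cdots\alpha _{j+r-1}\in\rho _T$ and $p\in\rho $ properly overlap (the left piece $v=\alpha _{j+r-d}\cdots\alpha _{j-1}$ has \alength $d-r\pgq 1$), so by Consequence \ref{csq:subpaths-d-a-covering} every \asubpath of \alength $d$ of their combined overlap is in $\rho $; in particular, $\alpha _{j+r-d+1}\cdots\alpha _{j+r-1}\alpha _t\in\rho $. This new element of $\rho $ in turn overlaps $\alpha _t\alpha _{t+1}\cdots\alpha _{t+d-1}\in\rho _T$, and another application of Consequence \ref{csq:subpaths-d-a-covering} yields $\alpha _{j+r-d+2}\cdots \alpha _{j+r-1}\alpha _t\alpha _{t+1}\in\rho $. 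Iterating this overlap argument, sliding the window one $A$-step at a time across the junction between $\alpha _{j+r-1}$ and $\alpha _t$, shows that every \asubpath of \alength $d$ lying on a power of $S$ is in $\rho $, so $\rho _S\subseteq\rho $.

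The main obstacle will be the careful index bookkeeping (with wraparound modulo $n$) and, in particular, verifying at each iteration that the two elements of $\rho $ in question admit a proper overlap with overlap piece of length a multiple of $A$, so that Consequence \ref{csq:subpaths-d-a-covering} applies and the \apath structure is preserved throughout the induction.
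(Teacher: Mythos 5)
Your proposal is correct and follows essentially the same route as the paper, which itself proves Proposition \ref{cj-da-trails} by transcribing the argument of Proposition \ref{cj-d-trails} to \apaths, substituting Consequence \ref{csq:atrail} for Consequence \ref{csq:trail} in the step showing the repeated-vertex subcycle has \alength at least $d$, and Consequence \ref{csq:subpaths-d-a-covering} for the $d$-covering property in the sliding-overlap argument. Your verification that the subcycle $q$ is a prefix and a suffix of elements of $\rho_T$ (so that Consequence \ref{csq:atrail} applies) is exactly the point the paper's one-line proof leaves implicit.
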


  \begin{proof}
    The proof is very similar to that of Proposition \ref{cj-d-trails}, replacing paths with \apaths and using Consequence \ref{csq:atrail} in the proof that $d>k$.
  \end{proof}

  \begin{remark}\label{rk:d-trails} We keep the assumptions and notation of Proposition~\ref{cj-da-trails}.  Set $T_i=\alpha _i\cdots \alpha _n\alpha _1\cdots\alpha _{i-1}$. 
     Then for any $j\pgq 1$, we have
\begin{align*}
\ov T_i^j\ov\alpha _k\neq 0&\iff k=i-1\\
\ov \alpha _k\ov T_i^j\neq 0&\iff k=i.
\end{align*}
  \end{remark}

\begin{lem}\label{lm:d-a-stacked-power-closed-trail-Z}
 Let $T=\alpha _1\cdots\alpha _n$ be a closed $A$-trail that satisfies assumptions~(i) and (ii) and set $z_j=\sum_{i=1}^n\ov T_i^j$ with $nj=\delta (u)$ for some $u\pgq 1$. Then   $z_j\in Z$ if, and only if,  $T $ satisfies Condition~\ref{cond:comb-da}~(2).

Moreover, if $T $ does not satisfy  Condition~\ref{cond:comb-da}~(2), then no
element in $ B$ that is  homogeneous with respect to $\abs{\cdot}$ and $\mg$ (when viewed in $\lkd\Lambda $) and
that is a linear combination of non-trivial cycles lying on $\ov T$ is in $Z$.
\end{lem}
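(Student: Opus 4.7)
The plan is to mirror the proof of Lemma~\ref{lm:dKoszul-power-closed-trail-Z}, replacing arrows throughout by $A$-paths and using Remarks~\ref{rk:d-trails} and~\ref{rk:Z-da-stacked-degrees}. The key structural difference is that in $B$ the degree-$1$ part multiplies to zero against any other positive-degree part, and any product of two odd-degree elements also vanishes; consequently, centrality of $z_j$ must be verified by commutation with elements of $B_2$ and $B_3$, rather than with $B_1$ and $B_2$ as in the $d$-Koszul case.

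For the forward direction, I assume $z_j \in Z$ and suppose for contradiction that some $\alpha_i p \in \rho \setminus \rho_T$ exists (the case ending with $\alpha_i$ is symmetric). Since $\alpha_i p \notin \rho_T$ it is not an $A$-subpath of any power of $T$, so Proposition~\ref{cj-da-trails} yields an $A$-arrow $\beta$ of $p$ outside $\{\alpha_1,\ldots,\alpha_n\}$. Since $|z_j|$ and $|\ov{\alpha_i p}|$ are both at least $2$, the identity $\ov{\alpha_i p}\, z_j = z_j\, \ov{\alpha_i p}$ in $B$ is computed in $\lkd\Lambda$ and, by Remark~\ref{rk:d-trails}, reduces to the monomial equation $\ov p\,\ov\alpha_i\,\ov T_i^j = \sum_k \ov T_k^j\,\ov p\,\ov\alpha_i$. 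Comparing prefixes in the monomial algebra $\lkd\Lambda$ shows this cannot hold as a nonzero monomial equation, since the stray $\ov\beta$ blocks any matching, so both sides vanish. Vanishing forces some $A$-subpath of $A$-length $d$ of $T_i^j \alpha_i p$ to lie outside $\rho$, contradicting Consequence~\ref{csq:subpaths-d-a-covering} applied to the overlap of $\alpha_{i-d+1}\cdots\alpha_i \in \rho_T$ with $\alpha_i p \in \rho$.

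For the converse, assuming Condition~\ref{cond:comb-da}~(2) holds, I verify that $z_j$ graded-commutes with every $\ov q \in B_2 \cup B_3$. For $\ov q \in B_2$ with $q \in \rho$: if $q \in \rho_T$ a direct computation using Remark~\ref{rk:d-trails} matches the unique nonzero terms on each side; if $q \notin \rho_T$ then Condition~\ref{cond:comb-da}~(2) forces the first and last $A$-arrows of $q$ to avoid $\{\alpha_1, \ldots, \alpha_n\}$, so both products vanish. For $\ov q \in B_3$ with $q$ of $A$-length $d+1$, I first show that the two $A$-subpaths of $A$-length $d$ of $q$ either both lie in $\rho_T$ or neither does, for if say only the suffix lay in $\rho_T$ then the prefix would belong to $\rho \setminus \rho_T$ yet end with some $\alpha_i$, contradicting Condition~\ref{cond:comb-da}~(2). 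Each case is then handled as for $B_2$, and when $|z_j|$ is odd the $B$-product with $\ov q$ vanishes identically, making the check automatic.

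For the ``moreover'' part, write $z = \sum_{i=1}^m \lambda_i c_i \in Z$ homogeneous with respect to $|\cdot|$ and $\mg$, a linear combination of nontrivial cycles on $\ov T$. Assumption~(i) rules out $A$-loops, so $\ell(z) \geq 2$ and $|z| \geq 2$. Multi-homogeneity forces the $c_i$ to be cyclic permutations of $c_1$, so after relabelling $c_i = \ov T_i^{j-1} \ov\alpha_{i-1} \cdots \ov\alpha_{i-s}$ for a fixed $s \in \{1,\ldots,n\}$ with $m = n$. Commutation of $z$ with $\ov\alpha_{k+d-1}\cdots\ov\alpha_k \in B_2$, combined with Remark~\ref{rk:d-trails}, yields for each $k$ a monomial identity in $\lkd\Lambda$ of the form $\lambda_k(\cdot) = \lambda_{k+s+d}(\cdot)$; endpoint analysis then forces either $\lambda_k = 0$ or $s = n$ together with $\lambda_k = \lambda_{k+d}$. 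When $|z|$ is even, analogous commutation with $\ov\alpha_{k+d}\cdots\ov\alpha_k \in B_3$ gives $\lambda_k = \lambda_{k+d+1}$; combining with $\lambda_k = \lambda_{k+d}$ yields $\lambda_k = \lambda_{k+1}$ for all $k$, so all $\lambda_k$ are equal. When $|z|$ is odd, the $B$-products with $B_3$-elements vanish, so only $B_2$ commutation is available; however $nj = (|z|-1)d/2 + 1$ forces $\gcd(n,d) = 1$, and $\lambda_k = \lambda_{k+d}$ then propagates to all indices. In both cases $z$ is a nonzero scalar multiple of $z_j$, and the forward direction gives that $T$ satisfies Condition~\ref{cond:comb-da}~(2). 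The main obstacle will be this odd-$|z|$ case, where commutation in $B_3$ is vacuous and we are restricted to $B_2$ alone; pushing through requires the coprimality $\gcd(n,d) = 1$, which is meaningful precisely because $d \geq 3$ under the hypothesis $D \neq 2A$.
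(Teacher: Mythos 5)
Your proposal is correct and follows essentially the same route as the paper, whose own proof is a sketch referring back to Lemma~\ref{lm:dKoszul-power-closed-trail-Z} with arrows replaced by $A$-paths, the $d$-covering property replaced by Consequence~\ref{csq:subpaths-d-a-covering}, and Proposition~\ref{cj-d-trails} replaced by Proposition~\ref{cj-da-trails}. In particular you have correctly identified the two points the paper singles out: centrality must be tested against $B_2$ and $B_3$ (not $B_1$), and in the ``moreover'' part the $B_2$ test gives $s=n$ and $\lambda_k=\lambda_{k+d}$, which suffices when $\abs{z}$ is odd via $\gcd(n,d)=1$, while for $\abs{z}$ even one must additionally use $B_3$ to get $\lambda_k=\lambda_{k+d+1}$ and hence $\lambda_k=\lambda_{k+1}$.
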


\begin{proof}
The proof is very similar to that of Lemma \ref{lm:dKoszul-power-closed-trail-Z}, replacing paths with \apaths, again using Remark \ref{rk:Z-da-stacked-degrees}, replacing the $d$-covering property by Consequence \ref{csq:subpaths-d-a-covering} and Proposition \ref{cj-d-trails} by Proposition \ref{cj-da-trails}. Note also that for the proof of the last part, testing commutation with paths in $B_2$ gives $s=n$ and $\lambda _k=\lambda _{k+d}$ for all $k$ and hence the result if $\abs z$ is odd; and if $\abs z$ is even, we must use the fact that $z$ commutes with elements in $B_3$ in a similar way to obtain, in addition, that $\lambda _k=\lambda _{k+d+1}$ for all $k$ and hence that $\lambda _k=\lambda _{k+1}$ for all $k$.
\end{proof}

\begin{proof}[Proof of Theorem \ref{tm:characterisations-fg-da-stacked}]
We note first that if $\gldim\Lambda$ is finite then $\Lambda$ satisfies \textbf{(Fg)} and Condition~\ref{cond:comb-da} is empty. The implication \ref{cond:J-da-stacked} $\Rightarrow$ \ref{cond:fg} is Theorem~\ref{tm:da-stacked-sufficient}. Again, the implication \ref{cond:Zgr} $\Rightarrow$ \ref{cond:Zgr-light} is clear and if, in addition, $K$ is algebraically closed, then the implication \ref{cond:fg} $\Rightarrow$ \ref{cond:Zgr} follows from \cite{ES}. It remains to prove that \ref{cond:Zgr-light} implies \ref{cond:J-da-stacked}. The proof is similar to that of Theorem~\ref{tm:characterisations-fg-d-koszul}, again replacing paths with \apaths (we need not assume that the integers $k$ are even). 
\end{proof}

\begin{remark} Suppose that $K$ is algebraically closed. We have now extended the
  equivalence between \ref{cond:fg} and \ref{cond:Zgr}, already known for Koszul
  algebras from \cite{ES}, as well as $d$-Koszul monomial algebras by Theorem
  \ref{tm:characterisations-fg-d-koszul}, to  $(D,A)$-stacked monomial algebras
  with $D\neq 2A$ whenever $A>1$ .

In particular, we can extend Corollary \ref{cor:Zgr-light} to $(D,A)$-stacked monomial algebras.
\end{remark}

\begin{corollary}\label{cor:DA-Zgr-light}
  Let $\Lambda $ be a $(D,A)$-stacked monomial algebra over an algebraically closed field with $D\neq 2A$ whenever $A>1$. Assume that
  $E(\Lambda )$ is a finitely generated $Z_{\text{gr}}(E(\Lambda
  ))$-module. Then the algebra $Z_{\text{gr}}(E(\Lambda ))$ is Noetherian.
\end{corollary}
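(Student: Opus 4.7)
The plan is to obtain this as a direct consequence of Theorem \ref{tm:characterisations-fg-da-stacked}, by chasing the chain of implications established there. The hypothesis that $E(\Lambda)$ is a finitely generated $Z_{\text{gr}}(E(\Lambda))$-module is precisely condition \ref{cond:Zgr-light}. By Theorem \ref{tm:characterisations-fg-da-stacked}, under our assumption that $D \neq 2A$ whenever $A>1$, \ref{cond:Zgr-light} implies \ref{cond:J-da-stacked}, namely that Condition \ref{cond:comb-da} holds for $\Lambda$.

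Next, I would invoke Theorem \ref{tm:da-stacked-sufficient} (which is the implication \ref{cond:J-da-stacked} $\Rightarrow$ \ref{cond:fg} and does not require $K$ to be algebraically closed) to conclude that $\Lambda$ satisfies \textbf{(Fg)}. At this stage, the hypothesis that $K$ is algebraically closed enters: by the result of Erdmann and Solberg \cite{ES}, condition \textbf{(Fg)} implies that the graded centre $Z_{\text{gr}}(E(\Lambda))$ is Noetherian and that $E(\Lambda)$ is finitely generated over it, i.e.\ condition \ref{cond:Zgr}. In particular, $Z_{\text{gr}}(E(\Lambda))$ is Noetherian, which is the desired conclusion.

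There is no real obstacle here, since all the work has already been done in Theorem \ref{tm:characterisations-fg-da-stacked} and in the cited result of \cite{ES}; the corollary is merely the statement that the strengthening from ``$E(\Lambda)$ finitely generated over $Z_{\text{gr}}(E(\Lambda))$'' to ``$Z_{\text{gr}}(E(\Lambda))$ Noetherian and $E(\Lambda)$ finitely generated over it'' is automatic in this setting, which is exactly the content of the equivalence \ref{cond:Zgr-light} $\Leftrightarrow$ \ref{cond:Zgr} proved (via the detour through \ref{cond:J-da-stacked} and \ref{cond:fg}) in Theorem \ref{tm:characterisations-fg-da-stacked}.
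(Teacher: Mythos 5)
Your proposal is correct and is exactly the argument the paper intends: the corollary is stated as an immediate consequence of Theorem \ref{tm:characterisations-fg-da-stacked}, obtained by following the chain \ref{cond:Zgr-light} $\Rightarrow$ \ref{cond:J-da-stacked} $\Rightarrow$ \ref{cond:fg} $\Rightarrow$ \ref{cond:Zgr}, with the algebraic closedness of $K$ used only in the last step via \cite{ES}. Nothing to add.
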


\addappendix{The Ext algebra of a $(D,A)$-stacked monomial algebra}
\label{sec:ext-da-stacked-monomial}

{Leader and Snashall gave in \cite{LS} a presentation of the Yoneda
  algebra $E(\Lambda )$ of a $(D,A)$-stacked monomial algebra by quiver and
  relations. However, in our proof of Theorem
  \ref{tm:characterisations-fg-d-koszul} that \ref{cond:Zgr-light} implies
  \ref{cond:J-da-stacked} 
  for $d$-Koszul monomial algebras, we used the  description from  \cite[Sections 8 and 9]{GMMVZ} of  $E(\Lambda )$ as an algebra contained, as a graded vector space, in the Koszul dual $\tensor*[^{!}]{\Lambda}{} $. }
In this appendix, we generalise this description to $(D,A)$-stacked monomial algebras. 

Throughout this section, $\Lambda =K\cQ/I$ is a $(D,A)$-stacked   monomial
algebra with $D=dA$ and $d\pgq2$, 
where $I $ an
ideal generated by a set $\rho $ of \apaths of \alength $d=\frac{D}{A}$. We view $\Lambda $
as a
quotient of the tensor algebra: $\Lambda =\tns_{\Lambda _0}(\Lambda _1)/I$. 

All tensor products are taken over $\Lambda _0$ and we write  $\ot$ for $\otimes_{\Lambda _0}$. The subspace $\rln =I\cap(\Lambda
_1^{\ot D})=\spn(\rho )$ of $\tns =\tns _{\Lambda _0}(\Lambda _1)$ is a $\Lambda
_0$-$\Lambda _0$-submodule of $\Lambda _1^{\ot D}$; it is finite dimensional
over $K$. For an element $x\in \tns $, write $\ov x$ for its image in $\Lambda
$. Note that for $0\ppq i<D$ we have $\Lambda _i=\Lambda _1^{\ot i}$.

\subsection{Generalised Koszul complex of $\rln$}

Define  spaces $H_{\delta  (n)}\subseteq \tns _{\Lambda _0}^n(\Lambda _1)$  as follows: 
\[ H_0=\Lambda _0,\ H_1=\Lambda _1\text{ and, for }n\pgq 2,\ H_{\delta  (n)}=\bigcap_{i+j=\delta  (n)-d}(\Lambda _A^{\ot i })\ot \rln \ot (\Lambda _A^{\ot j}). \]
For $n\pgq 0$, let $\prj ^n$ be the right $\Lambda $-module defined by $\prj ^n=H_{\delta (n)}\ot \Lambda $; it is projective.

We have $H_{\delta (1)}=\Lambda _1=H_{\delta  (0)}\ot \Lambda _1$, $H_{\delta (2)}=\rln \subseteq \Lambda _1^{\ot D}=H_{\delta  (1)}\ot \Lambda _1^{\ot{D-1}}$ and, for any $n\pgq 3$,  $H_{\delta  (n)}\subseteq H_{\delta  (n-1)}\ot \Lambda _A^{\ot(\delta  (n)-\delta (n-1))}$. Indeed, for any $k\pgq 1$,
\begin{align*}
H_{\delta  (2k+2)}&=H_{(k+1)d}=\bigcap_{j=0}^{kd}(\Lambda _A^{\ot {(kd-j)}})\ot \rln \ot (\Lambda _A^{\ot j})\subseteq \bigcap_{j=d-1}^{kd}(\Lambda _A^{\ot (kd-j)})\ot \rln \ot (\Lambda _A^{\ot j})\\&=\bigcap_{j=0}^{(k-1)d+1}(\Lambda _A^{\ot ((k-1)d+1-j)})\ot \rln \ot (\Lambda _A^{\ot j})\ot (\Lambda _A^{\ot (d-1)})=H_{\delta  (2k+1)}\ot (\Lambda _A^{\ot (d-1)})\\
H_{\delta (2k+1)}&=H_{kd+1}=\bigcap_{j=0}^{kd+1}(\Lambda _A^{\ot {((k-1)d+1-j)}})\ot \rln \ot (\Lambda _A^{\ot j})\subseteq \bigcap_{j=A}^{kd+1}(\Lambda _A^{\ot ((k-1)d+1-j)})\ot \rln \ot (\Lambda _A^{\ot j})\\&=\bigcap_{j=0}^{kd}(\Lambda _A^{\ot ((k-1)d-j)})\ot \rln \ot (\Lambda _A^{\ot j})\ot \Lambda _A=H_{\delta  (2k)}\ot \Lambda _A
\end{align*} \sloppy It follows that the maps $F^1\colon \Lambda _1\ot \Lambda \rightarrow \Lambda _0\ot\Lambda \cong\Lambda $, $F^2\colon \Lambda _A^{\ot d}\ot\Lambda \rightarrow \Lambda _1\ot\Lambda $ and, for $n\pgq 3$,  $F^n\colon \Lambda _A^{\ot \delta  (n)}\ot \Lambda \rightarrow \Lambda _A^{\ot{\delta  (n-1)}}\ot \Lambda $ defined by 
\begin{align*}
F^1&(x_1\ot \lambda )=x_1\lambda \\
F^2&(x_1\ots x_D\ot\lambda )=x_1\ot x_2\cdots x_D\lambda \\
F^n&(y_1\ots y_{\delta (n)}\ot \lambda )=y_1\ots y_{\delta (n-1)}\ot y_{\delta (n-1)+1}\cdots y_{\delta (n)}\lambda ,
\end{align*}
where $x_i\in\Lambda _1$ and $y_i\in\Lambda _A$  for all $i$, induce maps $\diff^n\colon
\prj ^n\rightarrow \prj ^{n-1}$. More specifically,  for all $k\pgq1$,
\begin{align*}
F^{2k+1}(y_1\ots y_{kd+1}\ot \lambda )&= y_1\ots y_{kd}\ot y_{kd+1} \lambda\text{ if $n=2k+1$ is odd}\\
F^{2k+2}(y_1\ots y_{(k+1)d}\ot \lambda )&= y_1\ots y_{kd+1}\ot y_{kd+2}\cdots y_{(k+1)d} \lambda\text{ if   $n=2k+2$ is even.}
\end{align*}

Define also $\diff^0\colon \prj ^0=\Lambda _0\ot\Lambda \cong\Lambda \rightarrow \Lambda_0
$, which identifies with the natural
projection.

\sloppy Moreover, for $n\pgq 3$ we have $H_{\delta  (n+1)}\subseteq H_{\delta  (n)}\ot \Lambda _A^{\ot(\delta  (n+1)-\delta  (n))}\subseteq H_{\delta  (n-1)}\ot \Lambda _A^{\ot(\delta  (n)-\delta  (n-1))}\ot \Lambda _A^{\ot(\delta  (n+1)-\delta (n))}=H_{\delta (n-1)}\ot \Lambda _A^{\ot d}$ and $H_{\delta  (n+1)}\subseteq \Lambda _A^{\ot \delta  (n-1)}\ot \rln $, we have $H_{\delta (n+1)}\subseteq \left(\Lambda _A^{\ot \delta  (n-1)}\ot \rln \right)\cap \left(H_{\delta  (n-1)}\ot \Lambda _A^{\ot d}\right)=H_{\delta  (n-1)}\ot \rln $ (all the spaces involved are finitely generated and projective over $\Lambda _0$)  hence $\diff^n\circ \diff^{n+1}=0$. It is easy to check that $\diff^n\circ \diff^{n+1}=0$ when $n=1,$ $2$ or $3$.

Therefore we have a complex $(\prj ^n,\diff^n)$ of projective right $\Lambda $-modules.

\begin{thm}
  Let $\Lambda =K\cQ /I$ be a monomial algebra with $I$ generated in degree
  $D=dA$ with $d\pgq2$. Then $\Lambda $ is $(D,A)$-stacked monomial if, and only if,
  $(\prj ^\bullet,\diff^\bullet)$ is a minimal projective right $\Lambda $-module
  resolution of $\Lambda _0$.
\end{thm}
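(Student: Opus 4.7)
The plan is to prove both implications by comparing the complex $(\prj^\bullet, \diff^\bullet)$ with the Green--Huang--Zacharia minimal projective resolution of $\Lambda_0$ recalled in Subsection~\ref{subsec:Ext algebra}, whose $n$-th term is $\bigoplus_{R^n_j \in \R^n} \mt(R^n_j) \Lambda$. The reverse implication is straightforward: if $(\prj^\bullet, \diff^\bullet)$ is a minimal projective resolution of $\Lambda_0$, then since $H_{\delta(n)} \subseteq \Lambda_A^{\ot \delta(n)}$ sits in length-degree $A \cdot \delta(n) = \delta_A(n)$, the projective $\prj^n$ is generated in length-degree $\delta_A(n)$; by uniqueness of the minimal resolution up to isomorphism, this is exactly the defining condition for $\Lambda$ to be $(D,A)$-stacked monomial.

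For the forward direction, assume $\Lambda$ is $(D,A)$-stacked. The strategy is to produce an isomorphism of complexes between $(\prj^\bullet, \diff^\bullet)$ and the minimal resolution of $\Lambda_0$, from which exactness is inherited. Minimality of $(\prj^\bullet, \diff^\bullet)$ is transparent from the definition of $F^n$: the factor $y_{\delta(n-1)+1} \cdots y_{\delta(n)}$ has positive length (equal to either $A$ or $D-A$) and so lies in $\rrad$. The crucial step is to exhibit a $\Lambda_0$-bimodule basis of $H_{\delta(n)}$ indexed by $\R^n$. Each $R^n_j = a_1 \cdots a_{\delta_A(n)} \in \R^n$, with arrows grouped into blocks of length $A$, yields a tensor $(a_1 \cdots a_A) \ot (a_{A+1} \cdots a_{2A}) \ot \cdots \in \Lambda_A^{\ot \delta(n)}$, which lies in $H_{\delta(n)}$ because Consequence~\ref{csq:da-covering} guarantees that every \asubpath of \alength $d$ of $R^n_j$ belongs to $\rho$. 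Under this correspondence, the explicit form of $F^n$ matches the usual boundary of the Green--Huang--Zacharia resolution.

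The main obstacle will be the converse part of the basis identification: showing that every tensor $y_1 \ot \cdots \ot y_{\delta(n)}$ in the natural basis of $H_{\delta(n)}$, that is, every tensor for which each consecutive block of \alength $d$ is a path in $\rho$, arises from some element of $\R^n$. I would handle this by induction on $n$, using the recursive construction of $\R^n$ as maximal overlaps. Property~\ref{pty:da-stacked-covering} is the essential input: it constrains overlap lengths within $\rho$ so that both the truncation $y_1 \cdots y_{\delta_A(n-1)}$ corresponds to an element of $\R^{n-1}$ and the terminal block realises the required maximal overlap, with no strictly smaller prefix yielding a competing overlap. Once this is in place, the identification of the two complexes is complete and the theorem follows.
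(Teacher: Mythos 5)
Your proposal is correct, but it takes a genuinely different route from the paper. The paper does not compare $(\prj^\bullet,\diff^\bullet)$ with the overlap resolution at all: it proves exactness directly, showing by a length-degree argument that $\ker \diff^{2n+1}$ is concentrated in degrees at least $(n+1)D$, lifting a kernel element of degree exactly $(n+1)D$ to an element $z''$ of $\Lambda_A^{\ot (n+1)d}\ot\Lambda$ and using Consequence~\ref{csq:subpaths-d-a-covering} to check that $z''$ actually lands in $H_{(n+1)d}\ot\Lambda=\prj^{2n+2}$; minimality then falls out of the gap $\delta(n)<\delta(n+1)$. Your route instead reduces everything to the Green--Happel--Zacharia resolution (note: Happel, not Huang) via the combinatorial identification of the monomial basis of $H_{\delta(n)}$ with $\R^n$. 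That identification is correct: one containment is exactly Consequence~\ref{csq:da-covering}, and for the converse your induction works --- for $n\pgq 4$ the candidate overlap $pu$ in the definition of a maximal overlap has length exactly $D$, so condition (c) of maximality is automatic (no element of $\rho$ fits inside a proper prefix), while for $n=3$ the overlap $pu$ has length $D+A-1$ and it is Property~\ref{pty:da-stacked-covering}~(2) that excludes competing overlaps with extension of length strictly less than $A$, just as you say. The differentials match the usual ``multiply by the tail'' boundary, so exactness and minimality are inherited. What your approach buys is the explicit bijection between $\R^n$ and a monomial basis of $H_{\delta(n)}$, something the paper only records later (Remark~\ref{rk:basis-ext-algebra}); what it costs is that exactness of the overlap resolution is imported as a black box, whereas the paper's argument verifies exactness of $(\prj^\bullet,\diff^\bullet)$ from scratch. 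If you write this up, do spell out the $n=3$ case of the maximality check separately, since it is the one place where the overlap bookkeeping differs from the generic pattern.
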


\begin{proof}
  By construction, $\prj ^n$ is generated in degree $\delta (n).$ Therefore, if
  $(\prj ^\bullet,\diff^\bullet)$ is a minimal projective right $\Lambda $-module
  resolution of $\Lambda _0$, then $\Lambda $ is a $(D,A)$-stacked monomial
  algebra.

Conversely, assume that  $\Lambda $ is a $(D,A)$-stacked monomial
  algebra. We already have a complex $(\prj ^\bullet,\diff^\bullet)$ of right $\Lambda
  $-modules such that $\prj ^n$ is generated in degree $\delta (n)$. The beginning $\prj ^1\xrightarrow {\diff^1}\prj ^0\xrightarrow {\diff^0}\Lambda
  _0\rightarrow 0$ is exact. 

We prove exactness at $\prj ^{2n+1}$ for $n\pgq 1$ (the proof of exactness at
$\prj ^{2n}$ and at $\prj ^1$ is similar, without the need for Consequence \ref{csq:subpaths-d-a-covering}). 

First note that $\diff^{2n+2}(\prj ^{2n+2})$ is generated in degree $\delta (2n+2)=(n+1)D$
in $\prj ^{2n+1}$. 

\sloppy Let $z=\sum_{i}x_{nd+1,i}\ot\cdots\ot x_{1,i}\ot \lambda _i$ be an element in
$\ker \diff^{2n+1}$ with $x_{j,i}\in \Lambda _A$ and $\lambda _i\in \Lambda $ for all
$i,j$. Then $\sum_i x_{nd+1,i}\ot\cdots\ot 
x_{2,i}\ot x_{1,i}\lambda _i$ is in $\tns \ot I$. It follows that $\lambda
_i\in\bigoplus_{k\pgq D-A}\Lambda _k$ and that $\ker \diff^{2n+1}$ is generated in
degrees at least $(n+1)D$. Therefore $z$ can be rewritten as
$z=\sum_{i}x_{nd+1,i}\ots x_{1,i}\ot y_{d-1,i}\cdots y_{1,i}\lambda _i'$ with
$y_{j,i}\in\Lambda _A$ and $\lambda _i'\in \Lambda $ for all $i,j$, with the
$y_{j,i}$ right uniform and $\mt(y_{1,i})\neq\mt(y_{1,k})$ when $i\neq
k$. Write $\lambda _i'=\sum_{l\pgq 0}\lambda _{i,l}'$ with $\lambda _{i,l}'\in
\Lambda _l$ for all $i,l$, and $\lambda _{i,0}'=\mt(y_{1,i})$. Then each of
the $\sum_{i}x_{nd+1,i}\ots x_{1,i}\ot y_{d-1,i}\cdots y_{1,i}\lambda _{i,l}'$
is in $\ker \diff^{2n+1}$ so in particular $z':=\sum_{i}x_{nd+1,i}\ots x_{1,i}\ot y
_{d-1,i}\cdots y_{1,i}\in \ker \diff^{2n+1}$.

Consider $z'':=\sum_{i}x_{nd+1,i}\ots x_{1,i}\ot y_{d-1,i}\ots y_{1,i}\ot
\mt(y_{1,i})\in \Lambda _A^{\ot((n+1)d)}\ot \Lambda  $. We show that $z''\in
\prj ^{2n+2}$; this will imply that $z'=\diff^{2n+2}(z'')\in \im \diff^{2n+2}$. Since
$\Lambda $ is monomial and $\diff^{2n+1}(z')=0$, \sloppy we may assume that that for all $i$,   $
x_{1,i}y_{d-1,i}\cdots y_{1,i}$ is a path; since it is in $I$ and has degree $D$,  it is in
$\rho =\R^2$. By definition of $\prj ^{2n+1}$, we may assume that $z$ is written so that for each $i$,  $x_{d,i}\cdots x_{1,i}$ is a path in $\rho =\R^2$. The path    $
x_{1,i}y_{d-1,i}\cdots y_{1,i}$  properly overlaps $x_{d,i}\cdots x_{1,i}$ therefore, using
Consequence \ref{csq:subpaths-d-a-covering}, it follows that for all $k$ with $1\ppq
k\ppq d-1$ we have $x_{k,i}\cdots x_{1,i}y_{d-1,i}\cdots y_{k,i}\in \rho
$ and hence $z''\in \bigcap_{k=1}^{d-1}(\Lambda _A^{\ot (nd-k+1)})\ot \rln \ot (\Lambda _A^{\ot (k-1)})\ot \Lambda $. Finally, using the fact that $z\in \prj ^{2n+1}=H_{nd+1}\ot\Lambda $, we get $z''\in
H_{(n+1)d}\ot \Lambda =\prj ^{2n+2}$. We have proved that $(\ker
\diff^{2n+1})_{(n+1)d}\subseteq \im \diff^{2n+2}$.

Since $\im \diff^{2n+2}$ is generated in degree  $(n+1)d$ and $\ker \diff^{2n+1}$ is
generated in degrees at least $(n+1)d$, it follows that $\ker \diff^{2n+1}\subseteq \im
\diff^{2n+2}$ and finally that $\ker \diff^{2n+1}=\im{\diff^{2n+2}}$.

Finally, since $\im \diff^{n+1}$ is generated in degree $\delta (n+1)$ and $\prj ^n$ is
generated in degree $\delta (n)<\delta (n+1)$,  $\im \diff^{n+1}\subseteq \mathfrak r
\prj ^n$ for all $n$ and therefore the resolution is minimal. 
\qedhere

\end{proof}

\subsection{The Ext algebra }

\subsubsection{Some duality results}\label{sec:duality}

We recall without proof some results stated in \cite{bgs}. All modules are finitely generated $\Lambda _0$-$\Lambda _0$-bimodules. All claims are easily checked for
bimodules that are free and finitely generated as left or as right $\Lambda
_0$-modules, and follow for arbitrary finitely generated modules (since $\Lambda _0$ is semisimple,
all the $\Lambda _0$-modules (left or right) are projective).

For any bimodules $V$ and $W$, define $V^*=\Hom_{\Lambda _0-}(V,\Lambda _0)$ and
$\ld W=\Hom_{-\Lambda _0}(W,\Lambda _0)$; they are $\Lambda _0$-$\Lambda
_0$-bimodules, for the actions given for all $e,e'$ in $\Lambda _0$, 
$\alpha \in V^*$, $\beta \in\ld W$ and  $v\in
V$, $w\in W$ by:
\[(e\alpha e')(v)=\alpha (ve)e'\text{ and }(e\beta e')(w)=e\beta (e'w).\]

 There are natural isomorphisms of $\Lambda _0$-$\Lambda
_0$-bimodules $\Lambda _0^*\cong \Lambda _0\cong \ld \Lambda _0$ which we view
as identifications.

There are also natural bimodule isomorphisms $V\cong \ld{(V^*)}$ and $W\cong (\ld W)^*$ of
bimodules.

\sloppy If $V_1\subseteq V$ (respectively $W_1\subseteq W$), define
$V_1^\perp=\set{\alpha \in V^*\mid \alpha (V_1)=0}$
 (respectively $\lo W_1=\set{\beta \in \ld W\mid \beta  (W_1)=0}$). If $V_1$
 (respectively $W_1$) is a sub-bimodule, then they are
 sub-bimodules of $V^*$ and $\ld W$ respectively. 

Fix a $\Lambda _0$-$\Lambda _0$-bimodule $V$. Then:
\begin{enumerate}[(i)]
\item\label{item:dual-tensor} if $W$ is another $\Lambda _0$-$\Lambda _0$-bimodule, then there is an
  isomorphism of $\Lambda _0$-$\Lambda_0$-bimodules $\varphi _{V,W}\colon \ld
  V\ot \ld W\rightarrow \ld{(W\ot V)}$ given for all $\alpha \in \ld V$, $\beta
  \in\ld W$, $v\in V$ and $w\in W$ by $\varphi _{V,W}(\alpha \ot \beta )(w\ot
  v)=\alpha (\beta (w)v)$. There is a similar isomorphism with right duals, which sends $\alpha \ot\beta $ to the map $w\ot v\mapsto \beta (w\alpha (v))$;
\item\label{item:orthogonal-sum} if $U$ and $W$ are sub-bimodules of $V$, then $\lo{(U+W)}=\lo U\cap \lo W$
  and $(U+W)^\perp=U^\perp\cap W^\perp$;
\item\label{item:dual-quotient}  if $U$ is a sub-bimodule of $V$, then $(V/U)^*\cong U^\perp$ and
  $\ld(V/U)\cong \lo U$; 
\item\label{item:dual-idempotents} if $W$ is a sub-bimodule of $V$, then for any idempotents $e_i$, $e_j$
  with $(i,j)\in \cQ _0^2$, we have $e_i \ld W e_j=\ld(e_jWe_i)$;
\item\label{item:dual-dimension}  if $U$ is a sub-bimodule of $V$, then for all $i,j$ in $\cQ _0$ we have $\dim e_j\lo Ue_i=\dim e_iVe_j-\dim
  e_iUe_j=\dim e_jU^\perp e_i$;
\item\label{item:dual-double-orthogonal} if $U$ is a sub-bimodule of $V$, then   under the identification of $V$
  with $\ld{( V^*)}$, we have $\lo(U^\perp)$ and   under the identification of $V$
  with $(\ld V)^*$, we have $(\lo U)^\perp$;
\item \label{item:orthogonal-tensor}  if $U$ is a sub-bimodule of $V$ and $W$ and $Z$ are $\Lambda _0$-$\Lambda
  _0$-bimodules, there are bimodule isomorphisms $\lo(W\ot U\ot Z)\cong\ld Z\ot
  \lo U\ot \ld W$ and
  $(W\ot  U\ot Z)^\perp\cong Z^*\ot U^\perp\ot W^*$.
\end{enumerate}

\subsubsection{Description of the Ext algebra}

From the above, there is a natural isomorphism $\ld(\Lambda _A^{\ot i})\cong
(\ld\Lambda _A)^{\ot i}$, which we view as an identification. We also view $\rln $ as a subspace of $\Lambda _A^{\ot d}$ (rather than $\Lambda _1^{\ot D}$). We may then
consider $\lo \rln =\set{f\in (\ld\Lambda _A)^{\ot d}\mid f(x)=0\text{ for all
  }x\in \rln }$. The dual algebra of $\Lambda $ is then $\lkd\Lambda =\tns _{\Lambda
  _0}(\ld \Lambda _A)/(\lo \rln )$. It is a graded $d$-homogeneous algebra since
$\lo \rln  $ is contained in $(\ld\Lambda _A)^{\ot d}$, therefore $\lkd\Lambda
=\bigoplus_{n\pgq 0}\lkd\Lambda _n$.

In terms of quivers, we have $\Lambda _0=K \cQ _0$ and $\Lambda _A=K \cQ _A$, the vector space whose basis is the set $\cQ _A$ of paths of length $A$ in $\cQ $;
moreover, $\ld \Lambda _A\cong K \cQ _A^{\text{op}}$ using
\ref{item:dual-idempotents}. 

Then $\lkd \Lambda $ is isomorphic to $K \Gamma /(\lo\rho  )$ where $\Gamma $ is
the quiver with the same vertices as $\cQ $ and whose set of  arrows is $\Gamma
_1=\set{\ov\alpha \colon i\rightarrow j\mid \text{there exists } \alpha \in \cQ _A, \
  \alpha \colon j\rightarrow i}$ and where $\lo\rho  $ is the left orthogonal of
the set $\rho $ viewed as a set of $A$-paths,  for the bilinear form $K\Gamma
_d\times K(\cQ _A)_d\rightarrow K $ defined on paths of length $d$ in $\Gamma $ and \apaths of \alength $d $ in $\cQ $ by $\rep{\ov\alpha _d\cdots\ov\alpha _1,\beta _1\cdots \beta _d}=1$ if $\alpha _1\cdots\alpha _d=\beta _1\cdots\beta _d$ and $0$ otherwise, where the $\alpha _i$ and $\beta _i$ are paths of length $A$.  

The algebra $K\Gamma /{(\lo\rho )}$ is monomial,  and the ideal $(\lo\rho )$ has
a minimal generating set $\sigma $ given by all the paths $\ov \alpha
_d\cdots\ov\alpha _1$ such that the \apath $\alpha _1\cdots\alpha _d$ is not in
$\rho $. In particular,  if $\ov\gamma =\ov\alpha _r\ldots \ov\alpha _1$ is a
path in $\Gamma $, with $\alpha _i\in \cQ _A$ for all $i$, then $\ov\gamma \not\in (\sigma )$ if, and only if, for each $i$ with $1\ppq i\ppq r-d+1$, the path ${\alpha _i}\cdots {\alpha _{i+d-1}}$ is in $\rho $ (we use the fact that $\Lambda $ is monomial here). 

\begin{lem}
  \label{lm:dual-Bn}
There is an isomorphism $(\lkd\Lambda _{\delta (n)})^*\cong H_{\delta (n)}$.
\end{lem}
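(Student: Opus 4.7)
The plan is to obtain the isomorphism by taking the right dual of the quotient description of $\lkd\Lambda_{\delta(n)}$ and systematically applying the duality properties \ref{item:dual-tensor}--\ref{item:orthogonal-tensor} listed in Section \ref{sec:duality}. The cases $n=0$ (and, when $A=1$, also $n=1$) are immediate from $\lkd\Lambda_0=\Lambda_0$ and $\Lambda_0^*\cong\Lambda_0$, so the work is in the range $n\ge 2$, where $\delta(n)\ge d$.

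First I would express $\lkd\Lambda_{\delta(n)}$ as the quotient
\[
\lkd\Lambda_{\delta(n)}\;=\;(\ld\Lambda_A)^{\ot\delta(n)}\Big/ U_n,\qquad
U_n\;=\;\sum_{i+j=\delta(n)-d}(\ld\Lambda_A)^{\ot i}\ot\lo\rln\ot(\ld\Lambda_A)^{\ot j},
\]
using that $\lkd\Lambda=\tns_{\Lambda_0}(\ld\Lambda_A)/(\lo\rln)$ is generated in degree $1$ with relations concentrated in degree $d$, so the degree-$\delta(n)$ part of the two-sided ideal $(\lo\rln)$ is exactly $U_n$ (here one uses that $\lo\rln\subseteq (\ld\Lambda_A)^{\ot d}$).

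Next I would apply property \ref{item:dual-quotient} to get $(\lkd\Lambda_{\delta(n)})^*\cong U_n^{\perp}$ inside $((\ld\Lambda_A)^{\ot\delta(n)})^*$, identify the latter with $\Lambda_A^{\ot\delta(n)}$ via iteration of \ref{item:dual-tensor} together with $(\ld V)^*\cong V$, and then use \ref{item:orthogonal-sum} to commute $\perp$ with the sum:
\[
U_n^{\perp}\;=\;\bigcap_{i+j=\delta(n)-d}\Bigl((\ld\Lambda_A)^{\ot i}\ot\lo\rln\ot(\ld\Lambda_A)^{\ot j}\Bigr)^{\perp}.
\]
Property \ref{item:orthogonal-tensor} then rewrites each factor as $((\ld\Lambda_A)^{\ot j})^*\ot(\lo\rln)^{\perp}\ot((\ld\Lambda_A)^{\ot i})^*$, and applying $((\ld\Lambda_A)^{\ot k})^*\cong \Lambda_A^{\ot k}$ together with $(\lo\rln)^{\perp}\cong\rln$ (the latter from the double-orthogonal identification \ref{item:dual-double-orthogonal}) produces $\Lambda_A^{\ot j}\ot\rln\ot\Lambda_A^{\ot i}$. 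Swapping the dummy indices $i\leftrightarrow j$ (the tensor order gets reversed by the duality) yields the intersection defining $H_{\delta(n)}$.

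The main obstacle I anticipate is bookkeeping rather than ideas: one must check that the various canonical identifications are compatible, in particular that the right-dual identification $((\ld\Lambda_A)^{\ot\delta(n)})^*\cong\Lambda_A^{\ot\delta(n)}$ is compatible with the factor-by-factor identifications used inside the intersection (the iterated use of \ref{item:dual-tensor} reverses the tensor order, which is why the indices $i$ and $j$ end up swapped). A secondary point to address is the degenerate cases: for $n=0$ the result is immediate, and for $n=1$ one has $\delta(1)=1<d$, so the intersection defining $H_{\delta(n)}$ does not apply and the statement must be read (or the definition of $H_{1}$ interpreted) so that it matches $(\ld\Lambda_A)^*$; I would dispatch this case separately at the start of the proof.
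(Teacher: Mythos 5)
Your proposal is correct and follows essentially the same route as the paper's proof: write $\lkd\Lambda_{\delta(n)}$ as the quotient of $(\ld\Lambda_A)^{\ot\delta(n)}$ by the degree-$\delta(n)$ part of $(\lo\rln)$, then chain together properties \ref{item:dual-quotient}, \ref{item:orthogonal-sum}, \ref{item:orthogonal-tensor} and the double-orthogonal identification to land on the intersection defining $H_{\delta(n)}$. The only difference is cosmetic (the paper additionally records the explicit formula for the resulting map $g_x$, which is reused in the next lemma), and your remark about the low-degree cases $n=0,1$ is a reasonable extra precaution that the paper's proof passes over silently.
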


\begin{proof}
  By definition, $\lkd\Lambda _{\delta (n)}=\dfrac{(\ld\Lambda _A)^{\ot \delta 
      (n)}}{\sum_{i=0}^{\delta (n)-d}(\ld\Lambda _A)^{\ot {(\delta (n)-d-i)}}\ot
    \lo \rln \ot (\ld\Lambda _A)^{\ot i}}$. Therefore  we have 
\begin{align*}
 (\lkd\Lambda _{\delta (n)})^*&\cong {\left(\sum_{i=0}^{\delta (n)-d}(\ld\Lambda
                              _A)^{\ot (\delta (n)-d-i)}\ot    \lo \rln \ot (\ld\Lambda _A)^{\ot i}\right)}^\perp\\
 &=\bigcap_{i=0}^{\delta  (n)-d}{\left((\ld\Lambda _A)^{\ot (\delta (n)-d-i)}\ot     \lo \rln \ot (\ld\Lambda _A)^{\ot i}\right)}^\perp\\
 &=\bigcap_{i=0}^{\delta (n)-d}\left({((\ld\Lambda _A)^{\ot i})}^*\ot {(\lo \rln )}^\perp\ot{((\ld\Lambda _A)^{\ot  (\delta (n)-d-i)})}^*\right)\\
&\cong\bigcap_{i=0}^{\delta (n)-d}\left(\Lambda _A^{\ot i}\ot \rln \ot\Lambda _A^{\ot
  (\delta (n)-d-i)}\right)\\
&=H_{\delta (n)}.
\end{align*} This isomorphism takes $x=\sum x_1\ots  x_{\delta (n)}\in H_{\delta (n)}$ to the map
$g_x\colon \lkd\Lambda _{\delta (n)}\rightarrow \Lambda _0$ defined by
\[  g_x(\ov{\gamma _{\delta (n)}\ots \gamma _{1}})=\sum \gamma _{\delta (n)}(\gamma _{\delta (n)-1}(\ldots (\gamma _2(\gamma _1(x_1)x_2)\ldots)x_{\delta (n)-1})x_{\delta (n)})\]
 where the $x_i$ are in $\Lambda _A$ and
the $\gamma _i$ are in $\ld\Lambda _A$.
\end{proof}

\begin{lem}
\label{lm:isomorphisms}
  There is an isomorphism $\psi \colon \lkd\Lambda _{\delta (n)}\rightarrow
  \Hom_\Lambda (\prj ^n,\Lambda _0)$ given by \[\psi (\ov f)(x_1\ots  x_{\delta 
    (n)}\ot \lambda )=f_{\delta (n)}(f_{\delta (n-1)}(\ldots (f_1(x_1))\ldots
  x_{\delta (n)-1})x_{\delta (n)})\ov\lambda \]
  where $\ov f=\ov{f_{\delta (n)}\ots f_{1}}
  \in \lkd\Lambda _{\delta (n)}$ with
  $f_i\in \ld\Lambda _1$ for all $i$, $x_i\in \Lambda _1$ for all $i$ and
  $\lambda \in \Lambda $.
\end{lem}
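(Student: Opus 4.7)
The plan is to verify $\psi$ is well-defined and right $\Lambda$-linear, then identify it with the composition of the isomorphism of Lemma \ref{lm:dual-Bn} with the standard hom-tensor adjunction. Note that for the formula to type-check one must read $f_i \in \ld\Lambda_A$ and $x_i \in \Lambda_A$, consistent with the description $\lkd\Lambda = \tns_{\Lambda_0}(\ld\Lambda_A)/(\lo\rln)$ and with $\prj^n = H_{\delta(n)}\ot\Lambda \subseteq \Lambda_A^{\ot\delta(n)}\ot\Lambda$; reformulating in these degrees is the first step, and ensures that the iterated evaluation $f_{\delta(n)}(\ldots(f_1(x_1))\ldots)$ makes sense.

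I would then show that the iterated-evaluation formula defines a map $(\ld\Lambda_A)^{\ot\delta(n)} \to \Hom_\Lambda(\prj^n, \Lambda_0)$ using only right $\Lambda_0$-linearity of each $f_i$, with the trailing factor $\ov\lambda \in \Lambda_0$ making the assignment right $\Lambda$-linear. Next I would check that this map factors through the ideal defining $\lkd\Lambda_{\delta(n)}$: for any $f \in (\ld\Lambda_A)^{\ot(\delta(n)-d-i)}\ot \lo\rln \ot (\ld\Lambda_A)^{\ot i}$, after reconciling the reversed indexing of $f_{\delta(n)}\ots f_1$ with that of $x_1\ots x_{\delta(n)}$, the $\lo\rln$-block is evaluated on the slice $x_{i+1}\ot\cdots\ot x_{i+d}$, which by the intersection definition of $H_{\delta(n)}$ lies in $\rln$; hence this portion of the iterated evaluation yields $0$. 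So $\psi$ descends to a well-defined right $\Lambda$-linear map $\lkd\Lambda_{\delta(n)} \to \Hom_\Lambda(\prj^n,\Lambda_0)$.

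To establish bijectivity I would identify $\psi$ with a composition of natural isomorphisms. Since $\Lambda_0 = \Lambda/\rrad$ has trivial right $\rrad$-action, hom-tensor adjunction gives $\Hom_\Lambda(H_{\delta(n)}\ot\Lambda, \Lambda_0) \cong \ld H_{\delta(n)}$ via $\theta \mapsto (x\ot\lambda \mapsto \theta(x)\ov\lambda)$. Lemma \ref{lm:dual-Bn} provides the isomorphism $\Phi\colon H_{\delta(n)} \to (\lkd\Lambda_{\delta(n)})^*$, $x\mapsto g_x$; applying $\ld{(-)}$ and using the canonical identification $V\cong \ld{(V^*)}$ recalled at the start of Section \ref{sec:duality} yields $\lkd\Lambda_{\delta(n)} \cong \ld H_{\delta(n)}$, explicitly $\ov f\mapsto (x\mapsto g_x(\ov f))$. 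Composing the two, $\ov f \in \lkd\Lambda_{\delta(n)}$ is sent to the map $x\ot\lambda\mapsto g_x(\ov f)\ov\lambda$, which by the explicit formula for $g_x$ recorded in the proof of Lemma \ref{lm:dual-Bn} is precisely $\psi(\ov f)$. Hence $\psi$ is an isomorphism.

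The main obstacle I anticipate is purely bookkeeping: keeping the two reversed indexing conventions straight, and verifying at the element level that the above composition of natural maps produces exactly the formula given for $\psi$. No new ideas beyond the duality results compiled in Section \ref{sec:duality} and the explicit formula of Lemma \ref{lm:dual-Bn} are needed.
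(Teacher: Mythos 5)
Your proposal is correct and matches the paper's proof: both realise $\psi$ as the composite of the double-dual identification $\lkd\Lambda_{\delta(n)}\cong\ld{\bigl((\lkd\Lambda_{\delta(n)})^*\bigr)}$, the isomorphism $H_{\delta(n)}\cong(\lkd\Lambda_{\delta(n)})^*$ of Lemma \ref{lm:dual-Bn}, and hom-tensor adjunction, yielding $\ov f\mapsto[x\ot\lambda\mapsto g_x(\ov f)\ov\lambda]$. Your preliminary direct check of well-definedness (and the reading $f_i\in\ld\Lambda_A$, $x_i\in\Lambda_A$) is redundant once the composition is in place but entirely consistent with the paper's argument.
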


\begin{proof}
  The isomorphism $\psi $ is the composition of the following isomorphisms: 
\begin{itemize}
\item $\lkd\Lambda _{\delta (n)}\rightarrow \Hom_{-\Lambda _0}((\lkd\Lambda _{\delta 
  (n)})^*,\Lambda _0)$, which sends $ \ov f$ to the map $ [g\mapsto g(\ov f)]$;
\item $\Hom_{-\Lambda _0}((\lkd\Lambda _{\delta 
  (n)})^*,\Lambda _0)\rightarrow \Hom_{-\Lambda _0}(H_{\delta (n)},\Lambda _0)$,
which sends a map $h$ to the map $[x\mapsto h(g_x)]$, where $g_x$ is as in the
proof of Lemma \ref{lm:dual-Bn};
\item $\Hom_{-\Lambda _0}(H_{\delta (n)},\Lambda _0)\rightarrow
  \Hom_{-\Lambda}(H_{\delta (n)}\ot \Lambda ,\Lambda _0)$, which sends a map $k$
  to the map $[x\ot \lambda \mapsto k(x)\ov\lambda ]$.
\end{itemize}
Applying these isomorphisms to $\ov f$ gives the expression in the
statement. 
\end{proof}

Let $B$ be the vector space $B=\bigoplus_{n\pgq 0}B_n$ where $B_n=\lkd\Lambda
_{\delta (n)}$. Define a multiplication on $B$ as follows: for $x\in B_n$ and $y\in B_m$, set 
\[ x.y=
\begin{cases}
0&\text{ if $n$ and $m$ are odd};\\
0&\text{ if $n$ or $m$ is equal to $1$ and $n\pgq 1$, $m\pgq 1$};\\
xy&\text{ in $\lkd\Lambda $ otherwise}. 
\end{cases}
 \] 
 The algebra $B$ is a graded $K$-algebra generated in degrees $0$, $1$, $2$ and $3$.

We want to prove that   $E(\Lambda )\cong B$ {when $A>1$ and $D\neq 2A$}. We first need a description of the
Yoneda product.

\begin{prop}
\label{prop:cup-product}
 Let $\Lambda $ be a $(D,A)$-stacked monomial algebra with $A>1$, $D=dA$ and $d\pgq 3$.
    The Yoneda product of $f_n\in\Hom_{\Lambda }(\prj ^n,\Lambda _0)$ and $f_m\in\Hom_{\Lambda
  }(\prj ^m,\Lambda _0)$ is given by 
\[ f_nf_m=
\begin{cases}
0\text{ if $n$ and $m$ are odd}\\
0\text{ if $n\pgq1$, $m\pgq 1$ and $n=1$ or $m=1$}\\
\sum x_1\ots x_{\delta (n)+\delta (m)}\ot\lambda \mapsto f_n(\sum f_m(x_1\ots
x_{\delta (m)}\ot 1)x_{\delta (m)+1}\\
\hspace*{2.5in}\ot x_{\delta
  (m)+2}\ots x_{\delta (m)+\delta (n)}\ot \lambda )\text{ otherwise,}
\end{cases}
 \] where the $x_i$ are all in $\Lambda _A$.

\end{prop}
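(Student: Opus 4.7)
The plan is to compute the Yoneda product by the standard chain-map-lifting procedure. For each $m$, I would build a family of right $\Lambda$-module maps $\tilde f_m^{(k)} \colon \prj^{k+m} \to \prj^k$ for $k\geqslant 0$ satisfying $\diff^k \circ \tilde f_m^{(k)} = \tilde f_m^{(k-1)} \circ \diff^{k+m}$, with $\tilde f_m^{(0)}$ being the natural extension of $f_m$ through $\prj^0 \cong \Lambda \twoheadrightarrow \Lambda_0$. Then $f_n\cdot f_m$ is represented by $f_n \circ \tilde f_m^{(n)}$, and the whole task reduces to writing down such a lift explicitly.

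For the main case $n,m \geqslant 2$ with at least one of them even, I would propose the candidate lift
\[
\tilde f_m^{(k)}(x_1 \ots x_{\delta(k+m)} \ot \lambda) = f_m(x_1 \ots x_{\delta(m)} \ot 1)\,x_{\delta(m)+1} \ot x_{\delta(m)+2} \ots x_{\delta(m)+\delta(k)} \ot \lambda,
\]
where the $x_i$ lie in $\Lambda_A$. Well-definedness reduces to the inclusion $H_{\delta(k+m)} \subseteq \Lambda_A^{\ot \delta(m)} \ot H_{\delta(k)}$, which follows from the intersection definition of $H_{\delta(k+m)}$ together with the equality $\delta(k+m) = \delta(k)+\delta(m)$; a quick case check on parities shows this equality holds as soon as $m$ is even, or $m$ is odd and $k$ is even. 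Verifying that the family commutes with the boundaries $\diff^\bullet$ of Section~A.1 is then a direct induction on $k$, splitting into parities and plugging into the formulas for $\diff^{2i}$ and $\diff^{2i+1}$. Composing with $f_n$ produces exactly the formula displayed in the statement.

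For the vanishing cases, the argument is a degree count that is valid for any graded lift. A generator of $\prj^{k+m}$ has pure $K$-degree $\delta_A(k+m)$, and its image under $\tilde f_m^{(k)}$, which lies in $H_{\delta(k)} \ot \Lambda$, must have its $H_{\delta(k)}$-part in degree $\delta_A(k)$; hence the $\Lambda$-factor carries the excess degree $\delta_A(k+m) - \delta_A(k) - \delta_A(m)$. A direct computation from the piecewise definition of $\delta_A$ shows that this excess equals $(d-2)A$ when $k$ and $m$ are both odd and at least~$3$, and equals either $A-1$ or $(d-1)A-1$ when exactly one of $k,m$ equals~$1$ and the other is $\geqslant 2$. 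All three numbers are strictly positive under the hypotheses $A>1$ and $d\geqslant 3$, so the $\Lambda$-factor lies in the radical $\rrad$. Since $f_n$ factors through the augmentation $\Lambda \to \Lambda_0$ it vanishes on $\rrad$, so $f_n\circ \tilde f_m^{(n)} = 0$ regardless of the chosen lift.

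The main obstacle I anticipate is the chain-map verification of the candidate lift: the intersection definition of $H_{\delta(k+m)}$ must be unpacked carefully according to the parities of $k$ and $m$, and the three distinct formats $\prj^0 = \Lambda_0 \ot \Lambda$, $\prj^1 = \Lambda_1 \ot \Lambda$, and $\prj^k = H_{\delta(k)} \ot \Lambda$ for $k \geqslant 2$ require separate bookkeeping at the boundary values of~$k$. The hypotheses $A>1$ and $d\geqslant 3$ enter essentially in the vanishing cases: they make every excess degree computed above strictly positive and so force the product to be zero.
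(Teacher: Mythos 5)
Your proposal is correct and, for the main case ($n,m\pgq 2$ with at least one even), follows essentially the same route as the paper: the same candidate lifting $f_m^{(k)}(x_1\ots x_{\delta(k+m)}\ot\lambda)=f_m(x_1\ots x_{\delta(m)}\ot 1)x_{\delta(m)+1}\ot\cdots$, justified by the inclusion $H_{\delta(k+m)}\subseteq \Lambda_A^{\ot\delta(m)}\ot H_{\delta(k)}$, with the chain-map verification left as a routine parity-by-parity induction (the paper likewise calls it ``tedious but straightforward''; note that when $m$ is odd you will also need the modified lift at odd $k$, where $\delta(k+m)-\delta(k)-\delta(m)=d-2$ extra factors must be absorbed into the $\Lambda$-slot, exactly as in the paper's third formula for $f_m^i$). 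Where you genuinely diverge is in the vanishing cases: the paper simply cites Leader--Snashall \cite[Theorem 3.4]{LS} for the facts that odd-times-odd products and products with a degree-one factor vanish, whereas you give a self-contained graded-lifting degree count, showing that the excess internal degree $\delta_A(k+m)-\delta_A(k)-\delta_A(m)$ equals $(d-2)A$, $A-1$ or $(d-1)A-1$ in the relevant cases, hence is strictly positive under $A>1$ and $d\pgq 3$, so the $\Lambda$-factor of the lifted image lies in $\rrad$ and is killed by $f_n$. This buys independence from \cite{LS} and makes transparent exactly where the hypotheses $A>1$ and $D\neq 2A$ are used; the only (trivial) omission is the subcase $n=m=1$, which your second case formally includes but your excess-degree list does not cover --- there the excess is $D-2=dA-2>0$, so the same argument applies.
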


\begin{proof}
  If $m$ and $n$ are odd or if $m\pgq 1$
  or $n\pgq 1$ is equal to $1$ then, {under the assumption that $A>1$ and  $D\neq 2A$,} the Yoneda products vanish by \cite[Theorem
  3.4]{LS}. We now assume that $m$ or $n$ is even and that $m\neq 1$ and
  $n\neq 1$. 

Let $\sigma \colon \Lambda _0\rightarrow \Lambda $ be the natural inclusion. 

Consider $f_m\colon \prj ^m\rightarrow \Lambda _0$; it lifts to $f_m^0=\sigma \circ
f_m\colon \prj ^m\rightarrow \Lambda $. We  now define further liftings
$f_m^i\colon  \prj ^{m+i}\rightarrow \prj ^i$ for $i\pgq 1$ as follows: 
\begin{align*}
f_m^1(x_1\ots x_{\delta (m+1)}\ot\lambda )&=f_m^0(x_1\ots x_{\delta (m)}\ot 1 )x_{\delta (m)+1}\ot  x_{\delta (m)+2}\cdots x_{\delta (m+1)}\lambda \\
f_m^i(y_1\ots y_{\delta (m+i)}\ot \lambda )&  =f_m^0(y_1\ots y_{\delta (m)}\ot 1)y_{\delta (m)+1}\ots    y_{\delta (m+i)} \ot \lambda \\&\hphantom{\qquad\qquad\qquad\qquad\ot y_{\delta  (m+i-1)+2}\cdots y_{\delta (m+i)}\lambda \qquad}\text{if $m$ or $i\pgq 2$ is even}\\
f_m^i(y_1\ots y_{\delta (m+i)}\ot \lambda )&  =f_m^0(y_1\ots y_{\delta (m)}\ot  1)y_{\delta (m)+1}\ots   y_{\delta (m+i-1)+1}\\
&\qquad\qquad\qquad\qquad\ot y_{\delta  (m+i-1)+2}\cdots y_{\delta (m+i)}\lambda \qquad\text{if $m$ and $i\pgq2$ are odd}
\end{align*} where the $x_i$ are in $\Lambda _1$  and the $y_i$ are in $\Lambda _A$. The proof that  $(f_m^i)_{i\pgq 0}$ is a family of liftings of
$f_m$, that is, $f^{i-1}\circ \diff^{i+m}=\diff^i\circ f_m^i$ for all $i\pgq 1$, is tedious but straightforward.

Finally, if $n$ or $m$ is even and $n\pgq2$, $m\pgq 2$ ,then
\begin{align*}
f_nf_m(y_1&\ots y_{\delta (m)+\delta (n)}\ot \lambda )=f_n\circ f_m^n(y_1\ots y_{\delta (m+n)}\ot \lambda )\\
&=f_n(f_m^0(y_1\ots y_{\delta (m)}\ot 1)y_{\delta (m)+1}\ot y_{\delta (m)+2}\ots y_{\delta  (m)+\delta (n)}\ot \lambda )\\
&=f_n(f_m(y_1\ots y_{\delta (m)}\ot  1)y_{\delta (m)+1}\ot y_{\delta  (m)+2}\ots y_{\delta   (m)+\delta (n)}\ot \lambda ).
\qedhere\end{align*}

\end{proof}

\begin{thm}
If $\Lambda $ is a $(D,A)$-stacked monomial algebra, with { $A>1$, $D=dA$ and $d\pgq3$} 
, then $E(\Lambda )$ is isomorphic to $B$ as graded algebras. In particular,
$\Ext_\Lambda ^n(\Lambda _0,\Lambda _0)$ is isomorphic to $\lkd \Lambda _{\delta (n)}$.
\end{thm}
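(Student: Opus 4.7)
The plan is to combine Lemma \ref{lm:isomorphisms} with Proposition \ref{prop:cup-product} to produce a graded algebra isomorphism $\psi\colon B\to E(\Lambda)$. By Lemma \ref{lm:isomorphisms}, for every $n\pgq 0$ the explicit map $\psi_n\colon B_n=\lkd\Lambda_{\delta(n)}\to\Hom_\Lambda(\prj^n,\Lambda_0)=\Ext_\Lambda^n(\Lambda_0,\Lambda_0)$ is a $K$-linear isomorphism. Taking direct sums gives a graded $K$-vector space isomorphism $\psi\colon B\to E(\Lambda)$, which is the identity on $B_0=\Lambda_0$ and so is unital. The whole task is therefore to check that $\psi$ is multiplicative, that is, $\psi(x\cdot y)=\psi(x)\psi(y)$ (Yoneda product) for every pair of homogeneous elements $x\in B_n$, $y\in B_m$.

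I would split the verification into cases matching the definition of the product in $B$. In the degenerate cases where (a) both $n$ and $m$ are odd, or (b) one of $n,m$ equals $1$ with the other at least $1$, the product $x\cdot y$ is zero by definition of $B$; on the other hand, Proposition~\ref{prop:cup-product} says that the Yoneda product $\psi(x)\psi(y)$ vanishes in exactly these cases, so both sides are zero and there is nothing to prove. The only substantive case is when one of $n,m$ is even and both are at least $2$, in which case $\delta(n)+\delta(m)=\delta(n+m)$ and the product $x\cdot y$ in $B$ is simply the product $xy$ in $\lkd\Lambda$, which lies in $B_{n+m}=\lkd\Lambda_{\delta(n+m)}$.

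In that remaining case I would carry out the following explicit computation. Write $\bar f=\overline{f_{\delta(n)}\ots f_1}\in B_n$ and $\bar g=\overline{g_{\delta(m)}\ots g_1}\in B_m$ with $f_i,g_j\in\ld\Lambda_A$, so that $\bar f\bar g=\overline{f_{\delta(n)}\ots f_1\ot g_{\delta(m)}\ots g_1}$. Applying the formula of Lemma~\ref{lm:isomorphisms} to $\bar f\bar g$ at a typical element $x_1\ots x_{\delta(n+m)}\ot\lambda\in\prj^{n+m}$ gives a nested expression that can be reorganised as follows: compute the inner block $g_{\delta(m)}(\cdots g_1(x_1)\cdots x_{\delta(m)})\in\Lambda_0$, which is precisely $\psi(\bar g)(x_1\ots x_{\delta(m)}\ot 1)$; then push this idempotent element through $f_1$ using the $\Lambda_0$-bilinearity of $f_1$ (it is a map of $\Lambda_0$-bimodules), so that the remaining tail reads $f_{\delta(n)}(\cdots f_1(\psi(\bar g)(x_1\ots x_{\delta(m)}\ot 1)\cdot x_{\delta(m)+1})\cdot x_{\delta(m)+2}\cdots)\bar\lambda$. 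This is exactly the value of $\psi(\bar f)$ applied to $\psi(\bar g)(x_1\ots x_{\delta(m)}\ot 1)\cdot x_{\delta(m)+1}\ot x_{\delta(m)+2}\ots x_{\delta(n+m)}\ot\lambda$, which by Proposition~\ref{prop:cup-product} equals $(\psi(\bar f)\psi(\bar g))(x_1\ots x_{\delta(n+m)}\ot\lambda)$.

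The main obstacle I expect is bookkeeping: making sure the rewriting in the third paragraph is legitimate even when $\bar f\bar g$ happens to vanish in $\lkd\Lambda$ because the concatenated path contains a subpath in $\sigma$, and checking that in those situations the Yoneda product $\psi(\bar f)\psi(\bar g)$ also vanishes as a map $\prj^{n+m}\to\Lambda_0$ (this follows because a subpath of $\bar f\bar g$ in $\sigma$ forces the corresponding summand of the Yoneda composition to annihilate every element of $H_{\delta(n+m)}$, by the very definition of $H_{\delta(n+m)}$). Once this and the straightforward unitality are in place, $\psi$ is a unital graded algebra isomorphism, which proves that $E(\Lambda)\cong B$ and, degree by degree, that $\Ext_\Lambda^n(\Lambda_0,\Lambda_0)\cong\lkd\Lambda_{\delta(n)}$.
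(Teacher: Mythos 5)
Your proposal is correct and follows essentially the same route as the paper: the graded linear isomorphism comes from Lemma \ref{lm:isomorphisms}, the degenerate cases (both degrees odd, or a degree equal to $1$) match the vanishing built into both the product on $B$ and Proposition \ref{prop:cup-product}, and the remaining case is the same nested-evaluation computation the paper carries out. The "obstacle" you flag about $\bar f\bar g$ vanishing in $\lkd\Lambda$ is already absorbed by the well-definedness of $\psi$ on the quotient $\lkd\Lambda_{\delta(n+m)}$, since your identity is an equality of formulas on representatives.
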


\begin{proof}
We use the
  isomorphisms in Lemma \ref{lm:isomorphisms} and the cup-product described in Proposition \ref{prop:cup-product}. If $\ov f=\ov {f_{\delta (n)}\ots
    f_{1}}\in B_n$ and $\ov g=\ov{g_{\delta (m)}\ots g_{1}}\in
  B_m$, where $m$ or $n$ is even and both are at least $2$, then 
  \begin{align*}
     \psi (\ov f)&\psi (\ov g)(y_1\ots y_{\delta (m)+\delta (n)}\ot \lambda )=\psi   (\ov f)\left(\psi  (\ov g)(y_1\ots y_{\delta  (m)}\ot 1)y_{\delta (m)+1}\ots y_{\delta  (m+n)}\ot \lambda \right)\\
 &=f_{\delta (n)}(f_{\delta (n)-1}(\ldots (f_2(f_1(\psi (\ov g)(y_1\ots
   y_{\delta (m)}\ot1)y_{\delta (m)+1})y_{\delta (m)+2})\ldots)y_{\delta
 '  (m+n)-1})y_{\delta (m+n)})\ov \lambda \\
&=f_{\delta (n)}(f_{\delta (n)-1}(\ldots (f_2(f_1(g_{\delta 
  (m)}(\ldots(g_1(y_1))\ldots y_{\delta (m)})  y_{\delta (m)+1})y_{\delta (m)+2})\ldots)y_{\delta 
   (m+n)-1})y_{\delta (m+n)})\ov \lambda\\
&=\psi (\ov {fg})(y_1\ots y_{\delta (m)+\delta (n)}\ot \lambda )\\
&=\psi (\ov f\cdot\ov g)(y_1\ots y_{\delta (m)+\delta (n)}\ot \lambda )
  \end{align*} therefore $\varphi (\ov f\cdot \ov g)=\psi (\ov f)\psi (\ov g)$ and we have proved that $\psi $ is an isomorphism of graded algebras.
\end{proof}

\begin{remark}
  \label{rk:basis-ext-algebra}
Recall from Subsection~\ref{subsec:Ext algebra} that the $m$-th projective in a minimal projective right $\Lambda $-module resolution of $\Lambda _0$ is $L^m=\bigoplus_{R_i^m\in\R^m}\mt(R_i^m)\Lambda $. By Consequence \ref{csq:da-covering}, $R_i^m\in \prj ^m$. We then have an isomorphism $\prj ^m\rightarrow L^m$ which is determined by $R_i^m\mapsto \mt(R_i^m)$ for all $i$. 

As we mentioned in Subsection~\ref{subsec:Ext algebra}, the authors of \cite{GZ} 
 also gave a basis of $E(\Lambda )$, namely the set $\set{g_i^m\in\Hom_\Lambda (L^m,\Lambda _0)\mid R_i^m\in\R^m}$ where $g_i^m(\mt(R_j^m))=\mt(R_i^m)$ if $j=i$ and $0$ otherwise. The element $g_i^m$ corresponds to a map in $\Hom_\Lambda (\prj ^m,\Lambda _0)$ which we denote again by $g_i^m$ and that is defined by $g_i^m(R_j^m)=\mt(R_i^m)$ if $j=i$ and $0$ otherwise. 

We have isomorphisms $K\cQ _A\cong K\Gamma _1$ and $K\cQ _A\cong \ld\Lambda _A=\ld{(K\cQ _A)}$ which combine to the isomorphism which associates to $\ov\alpha \in \Gamma _1$  the linear form $f_\alpha $ on $K\cQ _A$ that sends $\beta \in \cQ _A$ to $\mt(\alpha )$ if $\beta =\alpha $ and to $0$ otherwise. This extends to an isomorphism between the algebras $K\Gamma /(\sigma )$ and $\lkd\Lambda $ that sends a path $\ov p$ of length $n$ to the class of the linear map $f_p\in (\ld\Lambda _A)^{\ot n}$ defined on \apaths by $f_p(q)=\mt(p)$ if $q=p$ and $0$ otherwise.

Now consider $R_i^m=\alpha _1\cdots\alpha _{\delta (m)}$ with $\alpha \in \cQ _A$ for all $i$ and $\ov R_i^m=\ov \alpha _{\delta (m)}\cdots \ov\alpha _1$ in $K\Gamma /(\sigma )$. It is easy to check that $g_i^m=\psi \left(\ov{f_{\alpha _{\delta (m)}}\ots f_{\alpha _1}}\right)$ so that it corresponds, via the isomorphism above, to $\ov R_i^m$.

Therefore we have a basis $\B_B=\set{\ov R_i^m\mid R_i^m\in\R^m}$ of $B$.
\end{remark}

\end{document}